\DeclareMathOperator{\WS}{\operatorname{WS}}
\DeclareMathOperator{\WA}{\operatorname{WA}}
\DeclareMathOperator{\Mat}{\operatorname{M}}
\DeclareMathOperator{\Mats}{\operatorname{S}}
\DeclareMathOperator{\Mata}{\operatorname{A}}
\DeclareMathOperator{\Diag}{\operatorname{Diag}}
\DeclareMathOperator{\GL}{\operatorname{GL}}
\DeclareMathOperator{\Vect}{\operatorname{span}}
\DeclareMathOperator{\im}{\operatorname{Im}}
\DeclareMathOperator{\tr}{\operatorname{tr}}
\DeclareMathOperator{\rk}{\operatorname{rk}}
\DeclareMathOperator{\urk}{\operatorname{urk}}
\renewcommand{\setminus}{\smallsetminus}
\def\ad{\text{ad}}
\def\F{\mathbb{F}}
\def\K{\mathbb{K}}
\def\calL{\mathcal{L}}
\def\calS{\mathcal{S}}
\def\calT{\mathcal{T}}
\def\calU{\mathcal{U}}
\def\calV{\mathcal{V}}
\def\calW{\mathcal{W}}
\def\calY{\mathcal{Y}}
\def\lcro{\mathopen{[\![}}
\def\rcro{\mathclose{]\!]}}
\theoremstyle{definition}
\newtheorem{Def}{Definition}[section]
\newtheorem{Not}[Def]{Notation}
\theoremstyle{plain}
\newtheorem{theo}{Theorem}[section]
\newtheorem{prop}[theo]{Proposition}
\newtheorem{cor}[theo]{Corollary}
\newtheorem{lemme}[theo]{Lemma}
\newtheorem{claim}{Claim}
\theoremstyle{plain}
\theoremstyle{remark}
\newtheorem{Rems}{Remarks}[section]
\newtheorem{Rem}[Rems]{Remark}
\title{Affine spaces of symmetric or alternating matrices with bounded rank}
\author{Cl\'ement de Seguins Pazzis\footnote{Universit\'e de Versailles Saint-Quentin-en-Yvelines, Laboratoire de Math\'ematiques
de Versailles, 45 avenue des Etats-Unis, 78035 Versailles cedex, France}
\footnote{e-mail address: dsp.prof@gmail.com}}
\begin{document}

\thispagestyle{plain}

\maketitle

\begin{abstract}
Let $r$ and $n$ be positive integers such that $r<n$, and $\K$ be an arbitrary field.
We determine the maximal dimension for an affine subspace of $n$ by $n$ symmetric (or alternating) matrices with entries in $\K$
and with rank less than or equal to $r$. We also classify, up to congruence, the subspaces of maximal dimension among them.
This generalizes earlier results of Meshulam, Loewy and Radwan that were previously known only for linear subspaces over fields with large cardinality
and characteristic different from $2$.
\end{abstract}

\vskip 2mm
\noindent
\emph{AMS Classification:} 15A30, 15A03

\vskip 2mm
\noindent
\emph{Keywords:} Rank, symmetric matrices, alternating matrices, fields with characteristic $2$, affine spaces.

\section{Introduction}

\subsection{The problem}

Let $\K$ be a (commutative) field. We denote:
\begin{itemize}
\item By $\Mat_{n,p}(\K)$ the space of all $n$ by $p$ matrices with entries in $\K$; we also set $\Mat_n(\K):=\Mat_{n,n}(\K)$;
\item By $\Mats_n(\K)$ the space of all $n$ by $n$ symmetric matrices with entries in $\K$;
\item By $\Mata_n(\K)$ the space of all $n$ by $n$ alternating matrices with entries in $\K$
(that is, the skew-symmetric matrices with diagonal zero or, equivalently, the matrices $A \in \Mat_n(\K)$
such that $X^T AX=0$ for all $X \in \K^n$);
\item By $\GL_n(\K)$ the group of all invertible matrices of $\Mat_n(\K)$.
\end{itemize}
Given a square matrix $M=(m_{i,j}) \in \Mat_n(\K)$, we denote by
$$\Delta(M):=\begin{bmatrix}
m_{1,1} & m_{2,2} & \cdots & m_{n,n}
\end{bmatrix}^T \in \K^n$$
its diagonal vector, and by $M^\ad$ the transpose of the comatrix of $M$, also known as the classical adjoint of $M$.
Given integers $i$ and $j$ in $\lcro 1,n\rcro$, we denote by $E_{i,j}$ the elementary matrix of $\Mat_n(\K)$
with exactly one non-zero entry, located at the $(i,j)$-spot and which equals $1$.

Two subsets $\calV$ and $\calW$ of $\Mat_n(\K)$ are called \textbf{congruent} whenever
there exists a matrix $P \in \GL_n(\K)$ such that
$$\calV=P\,\calW\, P^T,$$
i.e.\ $\calV$ and $\calW$ represent the same set of bilinear forms in a different choice of basis of $\K^n$.

Given a non-empty subset $\calV$ of $\Mat_n(\K)$, we define its \textbf{upper-rank} as
$$\urk \calV:=\max \bigl\{\rk M \mid M \in \calV\bigr\}.$$

Spaces of matrices with bounded rank have attracted much scrutiny in the last decades.
In this work, we shall consider subspaces of symmetric matrices with rank less than or equal to a given integer $r$.
The corresponding problem for rectangular or square matrices has a long history dating back to Flanders
\cite{AtkLloyd,Beasley,Flanders,Meshulam,dSPboundedrank}, and the most famous result is the following one:

\begin{theo}[Flanders's theorem]\label{Flandersaffine}
Let $n \geq p$ be positive integers, and $r$ be a non-negative integer such that $r \leq \min(n,p)$.
Let $\calS$ be an affine subspace of $\Mat_{n,p}(\K)$ such that $\urk \calS \leq r$.
Then,
$$\dim \calS \leq nr.$$
Moreover, if $\dim \calS=nr$, then:
\begin{itemize}
\item Either there exists a $(p-r)$-dimensional linear subspace of $\K^p$ on which all the elements of $\calS$ vanish;
\item Or $n=p$ and there exists an $r$-dimensional linear subspace of $\K^n$ that includes the range of every element of $\calS$;
\item Or $\# \K=2$, $n=p=2$ and $\calS$ does not contain the zero matrix.
\end{itemize}
\end{theo}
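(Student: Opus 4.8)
\emph{Proof strategy.} The plan is to reduce the whole problem to a normal form built from a single matrix of maximal rank, and then to estimate the dimension of the direction space by splitting it along a block decomposition. Set $s:=\urk\calS$. I will in fact prove the sharper bound $\dim\calS\le n\,s$, so there is no loss in assuming $\calS\neq\emptyset$ and $s\ge 1$; the inequality $\dim\calS\le nr$ then follows from $s\le r$, and any equality $\dim\calS=nr$ forces $s=r$. Picking $M_0\in\calS$ with $\rk M_0=s$ and using that the problem is invariant under the equivalence $\calS\mapsto U\calS V$ (with $U\in\GL_n(\K)$, $V\in\GL_p(\K)$), which preserves affineness, dimension and all ranks, I may assume $M_0=J:=\begin{pmatrix} I_s & 0 \\ 0 & 0 \end{pmatrix}$. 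Writing $\calS=M_0+\calV$ with $\calV$ the direction (a linear subspace with $\dim\calV=\dim\calS$), the problem becomes: for every $N\in\calV$, the matrix $M_0+N$ has rank at most $s$.

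The key structural step is to analyse the blocks $N=\begin{pmatrix} A & B \\ C & D\end{pmatrix}$ taken conformally with $J$ (so $A$ is $s\times s$ and $D$ is $(n-s)\times(p-s)$). I would first show that $D=0$ for every $N\in\calV$. Over an infinite field this is immediate: since $tN\in\calV$ for all $t$, each $(s+1)\times(s+1)$ minor of $M_0+tN$ formed from the first $s$ rows and columns together with one extra row and one extra column is a polynomial in $t$ vanishing identically, and its coefficient of $t^1$ is exactly the corresponding entry of $D$, whence $D=0$. Once $D=0$, I evaluate the rank condition on $M_0+N=\begin{pmatrix} I_s+A & B \\ C & 0\end{pmatrix}$ through the Schur complement: for every $t$ making $I_s+tA$ invertible one gets $C(I_s+tA)^{-1}B=0$, and extracting coefficients yields $CA^kB=0$ for all $k\ge 0$, in particular $CB=0$. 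Polarizing $CB=0$ over pairs $N,N'\in\calV$ then produces the crucial bilinear identity
\[
CB'+C'B=0\qquad\text{for all } N,N'\in\calV.\qquad(\star)
\]

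With $(\star)$ in hand the bound is a counting argument. Consider $\Phi:\calV\to\Mat_{s,p-s}(\K)\times\Mat_{n-s,s}(\K)$, $N\mapsto(B,C)$; its kernel $\calV_0$ consists of the matrices of $\calV$ supported on the top-left block, so $\dim\calV_0\le s^2$. Projecting $\Phi(\calV)$ onto its second factor $\mathcal C\subseteq\Mat_{n-s,s}(\K)$ and setting $W:=\bigcap_{C\in\mathcal C}\Ker C$ of dimension $w$, the identity $(\star)$ forces every $B$ occurring with $C=0$ to have all its columns in $W$, while every $C\in\mathcal C$ vanishes on $W$; hence
\[
\dim\Phi(\calV)\le w(p-s)+(n-s)(s-w)=(n-s)\,s+w(p-n).
\]
Since $n\ge p$ the last term is $\le 0$, so $\dim\calS=\dim\calV_0+\dim\Phi(\calV)\le s^2+(n-s)\,s=ns\le nr$, as wanted.

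The classification is where the real work and the exceptional case live. Equality forces $s=r$, $\dim\calV_0=r^2$, and $\dim\Phi(\calV)=(n-r)r$. When $n>p$ the slack term $w(p-n)$ must vanish, giving $w=0$, $\mathcal C=\Mat_{n-r,r}(\K)$ and no pure-$B$ vectors; then $\Phi(\calV)$ is the graph of a linear map $L$ with $CL(C')+C'L(C)=0$, and testing on matrix units (using $n-r\ge 2$, the case $p=r$ being vacuous) gives $L=0$, i.e.\ $B=0$ throughout, which is the common-kernel alternative. When $n=p$ both alternatives genuinely occur, and I would exploit the transpose symmetry $\calS\mapsto\calS^T$ (legitimate since $n=p$) to interchange the roles of range and kernel and shorten the casework. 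The hard part, and the source of the third, $\card\K=2$, $n=p=2$ alternative, is that the whole $D=0$ step rests on scaling by field elements: over a finite field the minor polynomials need not vanish, and over $\F_2$ (where $\calV$ contributes only the values $t\in\{0,1\}$) the argument collapses. Completing the proof over small fields therefore demands a separate treatment — replacing the scaling by a direct use of several elements of $\calV$ to manufacture an invertible top-left block, together with a careful low-dimensional analysis when $\card\K=2$ — and this is the step I expect to be the most delicate.
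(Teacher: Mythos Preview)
The paper does not actually prove Theorem~\ref{Flandersaffine}: it is quoted as background, with the sentence ``See \cite{Flanders} for the original proof in a less general setting, and \cite{dSPaffpres} for the above version of the theorem.'' So there is no proof in the paper to compare your proposal against.

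On its own merits: your normalisation to $M_0=J$, the block decomposition, the polarised identity $(\star)$, and the dimension count $\dim\calV\le s^2+(n-s)s=ns$ are all correct and amount to the classical Flanders/Atkinson--Lloyd argument, valid whenever $\#\K$ is large enough for your polynomial-in-$t$ extractions to go through. The gap is exactly where you locate it, but it is wider than you suggest: both the $D=0$ step and the extraction of $CB=0$ from the Schur identity require enough values of $t$ to kill a polynomial of degree up to $s+1$, so the argument as written breaks down for every field with $\#\K\le s+1$, not only for $\F_2$. Your proposed patch (``manufacture an invertible top-left block from several elements of $\calV$'') is only a slogan; making it work uniformly over arbitrary fields, and in the genuinely affine (non-linear) case, is precisely the nontrivial content of \cite{dSPaffpres}, which proceeds by an induction that avoids scaling altogether. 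Your classification sketch is also incomplete in the square case: when $n=p$ and $n-r=1$ the matrix-unit test forcing $L=0$ is unavailable, and invoking the transpose symmetry does not by itself isolate the exceptional $\#\K=2$, $n=p=2$ alternative.
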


See \cite{Flanders} for the original proof in a less general setting, and \cite{dSPaffpres} for the above version of the theorem.

In the symmetric case, which is a more recent issue, significant results were found by
Meshulam, Loewy and Radwan \cite{Loewy,LoewyRadwan,Meshulamsymmetric}:
some notation is necessary before we can give a proper account of them.

Given a subset $\calV$ of $\Mat_r(\K)$, with $r \leq n$, one sets
$$\widetilde{\calV}^{(n)}:=\Biggl\{ \begin{bmatrix}
S & [0]_{r \times (n-r)} \\
[0]_{(n-r) \times r} & [0]_{(n-r) \times (n-r)}
\end{bmatrix}\mid S \in \calV\Biggr\},$$
and one notes that $\urk \widetilde{\calV}^{(n)}=\urk \calV$. In particular
$\widetilde{\Mats_r(\K)}^{(n)}$ is a linear subspace of $\Mats_n(\K)$ with upper-rank $r$.

Given an even integer $r=2s$ in $\lcro 0,n\rcro$, we set
$$\WA_{n,r}(\K):=
\Biggl\{\begin{bmatrix}
A & B \\
-B^T & [0]_{(n-s) \times (n-s)}
\end{bmatrix}
\mid A \in \Mata_s(\K), \; B \in \Mat_{s,n-s}(\K)\Biggr\},$$
which is a linear subspace of $\Mata_n(\K)$ with upper-rank $r$
and dimension $\dbinom{s}{2}+s(n-s)$, and we set
$$\WS_{n,r}(\K):=
\Biggl\{\begin{bmatrix}
A & B \\
B^T & [0]_{(n-s) \times (n-s)}
\end{bmatrix}
\mid A \in \Mats_s(\K), \; B \in \Mat_{s,n-s}(\K)\Biggr\},$$
which is a linear subspace of $\Mats_n(\K)$ with upper-rank $r$
and dimension $\dbinom{s+1}{2}+s(n-s)$.
Finally, given an odd integer $r=2s+1$ in $\lcro 1,n\rcro$, we define
$\WS_{n,r}(\K)$ as the space of all symmetric matrices of the form
$$\begin{bmatrix}
[?]_{s \times s} & [?]_{s \times 1} & [?]_{s \times (n-s-1)} \\
[?]_{1 \times s} & ? & [0]_{1 \times (n-s-1)} \\
[?]_{(n-s-1) \times s} & [0]_{(n-s-1) \times 1} & [0]_{(n-s-1)\times (n-s-1)}
\end{bmatrix}.$$
One sees that $\WS_{n,r}(\K)$ is a linear subspace of $\Mats_n(\K)$ with upper-rank $r$ and dimension $\dbinom{s+1}{2}+s(n-s)+1$.

\begin{theo}[Meshulam, Loewy, Radwan]\label{generalrtheorem}
Let $S$ be a linear subspace of $\Mats_n(\K)$, and $r$ be an integer in $\lcro 1,n-1\rcro$.
Assume that $\urk S \leq r$, that $\K$ has characteristic not $2$ and that $\# \K>n$.
\begin{enumerate}[(a)]
\item If $r=2s$ for some integer $s$, then
$$\dim S \leq \max\biggl(\dbinom{r+1}{2}, \dbinom{s+1}{2}+s(n-s)\biggr).$$
\item  If $r=2s+1$ for some integer $s$, then
$$\dim S \leq \max\biggl(\dbinom{r+1}{2}, \dbinom{s+1}{2}+s(n-s)+1\biggr).$$
\end{enumerate}
In any case, equality occurs only if $S$ is congruent to $\widetilde{\Mats_r(\K)}^{(n)}$ or to $\WS_{n,r}(\K)$.
\end{theo}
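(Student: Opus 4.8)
The plan is to fix a maximal-rank element of $S$ and use it to put the whole space into a block normal form, then exploit the rank bound through a Schur-complement computation. I may first assume $\urk S=r$, since the desired bounds are nondecreasing in $r$, so the case $\urk S\leq r-1$ follows by induction on $r$. Choosing $M\in S$ with $\rk M=r$ and applying a congruence — which preserves symmetry and the congruence class of $S$, and which over a field of characteristic not $2$ diagonalizes $M$ — I would arrange
$$M=\begin{bmatrix} M_1 & 0 \\ 0 & 0\end{bmatrix},\qquad M_1\in\GL_r(\K)\cap\Mats_r(\K).$$

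For an arbitrary $N\in S$, written in the block decomposition conformal with that of $M$ as $\begin{bmatrix} A & B \\ B^T & D\end{bmatrix}$, the matrix $M+tN$ lies in $S$ for every $t\in\K$, so all its $(r+1)$-minors vanish; as polynomials in $t$ these have degree at most $r+1\leq n<\card\K$, hence they vanish identically and $M+tN$ has rank at most $r$ over $\K(t)$. Since $M_1+tA$ is invertible over $\K(t)$, the corresponding Schur complement must vanish, giving $tD-t^2\,B^T(M_1+tA)^{-1}B=0$ in $\Mat_{n-r}(\K(t))$. Reading off the coefficient of $t$ yields $D=0$, and the surviving identity $B^T(M_1+tA)^{-1}B=0$ gives, at $t=0$, the key relation $B^TM_1^{-1}B=0$. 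Polarizing over $N,N'\in S$ turns this into $B_N^TM_1^{-1}B_{N'}+B_{N'}^TM_1^{-1}B_N=0$, and the characteristic-not-$2$ hypothesis lets me read the diagonal relations freely.

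The vanishing of every bottom-right block means that the $(n-r)$-dimensional subspace $\Ker M$ is totally isotropic for every form in $S$, while $B_N^TM_1^{-1}B_N=0$ says that each $\im B_N$ is totally isotropic for the nondegenerate symmetric form attached to $M_1^{-1}$, hence of dimension at most $s$. I would then run an induction on $n$ driven by the dichotomy these constraints produce. If all the blocks $B_N$ vanish, then $S\subseteq\widetilde{\Mats_r(\K)}^{(n)}$ and $\dim S\leq\dbinom{r+1}{2}$, the first extremal value. Otherwise one produces a common totally isotropic subspace $Z\supseteq\Ker M$ whose codimension $j:=n-\dim Z$ the rank bound forces down to $j\leq s$; placing $Z$ in the last coordinates exhibits $S$ inside the space of symmetric matrices with a vanishing $(\dim Z)$-sized bottom-right block, whose dimension is exactly $\dbinom{j+1}{2}+j(n-j)$, a quantity maximized at $j=s$ and equal there to $\dbinom{s+1}{2}+s(n-s)$ (with an extra central coordinate in the odd case $r=2s+1$, handled by peeling hyperbolic planes so as to preserve the parity of $r$).

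The delicate point — and the step I expect to be the main obstacle — is to rule out the intermediate regime: when the common isotropic subspace is not large (that is, $s<j<r$) the crude block bound $\dbinom{j+1}{2}+j(n-j)$ overshoots both extremal values, so one must show that in that range the off-diagonal blocks are in fact forced to vanish, collapsing $S$ into the first case. This is precisely where the polarized identities $B_N^TM_1^{-1}B_{N'}+B_{N'}^TM_1^{-1}B_N=0$, the bound $\card\K>n$ (which keeps the $t$-polynomial arguments and the enlargements of isotropic subspaces available), and the characteristic-not-$2$ hypothesis are used in full. The same bookkeeping then powers the equality discussion: tracing the inductive reduction backwards, equality forces $M_1$ to be split and every fiber of the reduction to have maximal size, which pins $S$ down, up to congruence, to exactly $\widetilde{\Mats_r(\K)}^{(n)}$ or $\WS_{n,r}(\K)$.
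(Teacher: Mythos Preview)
The paper does not prove this theorem directly: it is cited as prior work of Meshulam, Loewy and Radwan, and the paper's own contribution is the more general Theorem~\ref{maintheosymmetric}, whose proof (in Sections~\ref{section4} and~\ref{section7} for characteristic not~$2$) specializes to give this statement. So the relevant comparison is with that proof.

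Your approach follows the classical line (fix a rank-$r$ element, normalize, Schur-complement), and the initial reductions are correct: you do get $D=0$ and $B^T M_1^{-1}B=0$ for each $N\in S$. But the step you flag as ``the main obstacle'' is not just delicate---it is the entire content of the theorem, and your sketch does not actually close it. From $B_N^T M_1^{-1}B_N=0$ you learn only that each $\im B_N$ is individually isotropic; the polarized identity $B_N^T M_1^{-1}B_{N'}+B_{N'}^T M_1^{-1}B_N=0$ does \emph{not} make $\sum_N \im B_N$ isotropic, so there is no mechanism in what you wrote to produce a \emph{common} totally isotropic subspace $Z$ of codimension $j\leq s$. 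Your dichotomy ``all $B_N=0$, or $j\leq s$'' is therefore unproven, and the ``intermediate regime'' you hope to rule out is precisely the generic situation. Without an independent bound on $\dim\{B_N:N\in S\}$, the argument stalls here.

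The paper's route is genuinely different and sidesteps this difficulty. Rather than working outward from a fixed maximal-rank element, it controls the rank-$\leq 2$ matrices in $S$ via the spaces $S_H$ (matrices in $S$ for which a hyperplane $H$ is totally singular). Lemma~\ref{keylemmasymmetriccarnot2} shows that if $\urk\calS\leq r$ then some hyperplane $H$ satisfies $\dim S_H\leq\lfloor r/2\rfloor$; this is proved by your Schur-complement computation, but applied to $S_H$ rather than to all of $S$, which is what makes the isotropy bound bite. The induction then branches on whether $S_H=\{0\}$, contains a rank-$2$ matrix, or equals $\K E_{n,n}$, each case peeling off one or two rows and columns via the extraction Lemmas~\ref{symmetriccornerlemma1}--\ref{symmetriccornerlemma2}. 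For the equality analysis the paper calls on Flanders's theorem (Corollary~\ref{Flanderscor}) and the classification of range-compatible maps (Theorem~\ref{symmetricRC}), tools that have no counterpart in your outline. In short: your normalization is a reasonable opening move, but the decisive idea---bounding $\dim S_H$ for a well-chosen hyperplane rather than trying to force a global isotropic decomposition---is missing.
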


In the above theorem, statements (a) and (b) were proved in \cite{Meshulamsymmetric}, whereas
the determination of the spaces of maximal dimension was established later by Loewy and Radwan \cite{LoewyRadwan}.
In Meshulam's inequality, only the assumption that $\# \K>\min(r+2,n)$ is necessary.
Our spaces $\widetilde{\Mats_r(\K)}^{(n)}$ and $\WS_{n,r}(\K)$ are denoted, respectively, by $W_1(n,r)$ and $W_2(n,r)$ in the works of the above authors.

There are quite a few possible ways to extend the above result.
One could try to understand the structure of the spaces whose upper-rank is less than or equal to $r$
and whose dimension is close to the critical one (in the same flavor as Atkinson and Lloyd's extension of
Flanders's theorem \cite{AtkLloyd}). This has been achieved by Loewy
when the critical dimension is not $\dbinom{r+1}{2}$, which encompasses the situation when $r$ is small with respect to $n$
(see \cite{Loewy}), for even values of $r$ only. In that case,
the natural result states that, if the dimension of a linear subspace $S$ with upper-rank bounded above by $r$
is close enough to the maximal one, then there should be a linear subspace of dimension $n-\frac{r}{2}$ of $\K^n$
on which all the matrices of $S$ are totally isotropic (which is the situation for the $\WS_{n,r/2}(\K)$ space).

Another desirable improvement over the above theorem would be the removal of the cardinality assumption
and of the characteristic assumption.

Finally, one could try to extend these results to affine subspaces
as well. Besides being a natural question, such an extension is motivated by potential applications
and has already proved fruitful in the situation of spaces of rectangular matrices with bounded rank
(note how the main result of \cite{dSPaffpres} is used in a crucial way in \cite{dSPboundedrank}).
Here is one such application: in the above result of Meshulam, the dimensional inequality can be reformulated as stating that if
the dimension of a linear subspace $\calV$ of symmetric matrices is large enough, then this subspace
must contain a matrix with rank greater than $r$. Now, say that we want to know if $\calV$
is actually \emph{spanned} by its matrices with rank greater than $r$. If this is not the case
then some affine hyperplane of $\calV$ would contain no such matrix, and hence the affine equivalent of Meshulam's theorem
would lead to a contradiction should the dimension of $\calV$ be large enough.
Moreover, as we shall see, extending the framework to affine subspaces is key to the study of the
special case when $\# \K=2$.

Note however that the extension to affine subspaces is trivial when $\# \K>r$: if in that case
we consider an affine subspace $\calS$ of $\Mat_n(\K)$ with upper-rank less than $r$, then one checks that $\Vect(\calS)$
is a linear subspace of $\Mat_n(\K)$ with upper-rank less than $r$;
indeed, $\calS$ is included in $\{M \in \Mat_n(\K) : \rk M \leq r\}$, which is defined by a system of $(r+1)$-homogeneous polynomial equations on $\Mat_n(\K)$; in general if a $(r+1)$-homogeneous polynomial function $f : \Mat_n(\K) \rightarrow \K$ vanishes everywhere on $\calS$,
then it must also vanish everywhere on $S$ since $\K$ has more than $r$ elements, and hence it vanishes everywhere on $\Vect(\calS)$.

\subsection{Main results}\label{section1.2}

In this article, we shall extend Theorem \ref{generalrtheorem} to an arbitrary field and to affine subspaces,
with the characteristic $2$ case taken into account. We shall also prove a similar result for spaces of alternating matrices with
bounded rank. We will not try to classify spaces whose dimension is close to the maximal one, but we are confident
that a proper use of our new techniques will help us make substantial advances in that direction in the near future.

Before we state our results, it is necessary to make a few comments on the characteristic $2$ case:
first of all, if $\K$ has characteristic $2$ then every alternating matrix over $\K$ is also symmetric, and hence
for every even integer $r$ the set $\Mata_{r+1}(\K)$ is a linear subspace of symmetric matrices with upper rank $r$;
hence, for all $n \geq r+1$, the set $\widetilde{\Mata_{r+1}(\K)}^{(n)}$ is a linear subspace of $\Mats_n(\K)$
with upper rank $r$ and dimension $\dbinom{r+1}{2}$.

Next, in the situation where $\# \K=2$, we can give an extra general class of large affine spaces
of singular symmetric matrices:
we define
$$Z_n(\K):=\Biggl\{\begin{bmatrix}
S & \Delta(S) \\
\Delta(S)^T & (n-1).1_\K
\end{bmatrix} \mid S \in \Mats_{n-1}(\K)\Biggr\}.$$
Obviously, $Z_n(\K)$ is an affine subspace of $\Mats_n(\K)$ with dimension $\dbinom{n}{2}$
(note that it is a linear subspace if and only if $n$ is odd).
Moreover, every matrix in $Z_n(\K)$ is singular. To see this, let $S \in \Mats_{n-1}(\K)$ and set
$M:=\begin{bmatrix}
S & \Delta(S) \\
\Delta(S)^T & (n-1).1_\K
\end{bmatrix}$; then,
$$\det M=(n-1)\det S-\Delta(S)^T S^\ad \Delta(S),$$
and as $\# \K=2$ one sees that
\begin{align*}
\Delta(S)^T S^\ad \Delta(S) & =\Delta(S)^T \Delta(S^\ad) \qquad \text{(since $\K=\{0,1\}$)} \\
& = \tr(S S^\ad) \qquad \text{(since $\K$ has characteristic $2$ and $S$ and $S^\ad$ are symmetric)} \\
& = \tr(\det(S).I_{n-1})=(n-1).\det S,
\end{align*}
whence $\det M=0$.
On the other hand, it is obvious that $Z_n(\K)$ contains a matrix with rank $n-1$, and hence its upper-rank is exactly $n-1$.

It follows that for all positive integers $r<n$, the space $\widetilde{Z_{r+1}(\K)}^{(n)}$ is an affine subspace of
$\Mats_n(\K)$ with upper rank $r$ and dimension $\dbinom{r+1}{2}$.

Still assuming that $\# \K=2$, we define, for every odd integer $r=2s+1$ such that $r<n$, the set
$Z'_{n,r}(\K)$ of all symmetric matrices of the form
$$\begin{bmatrix}
[?]_{s \times s} & [?]_{s \times 2} & [?]_{s \times (n-s-2)} \\
[?]_{2 \times s} & A & [0]_{2 \times (n-s-2)} \\
[?]_{(n-s-2) \times s} & [0]_{(n-s-2) \times 2} & [0]_{(n-s-2) \times (n-s-2)}
\end{bmatrix} \quad \text{with $A \in Z_2(\K)$.}$$
As every matrix of $Z_2(\K)$ has rank $1$ one checks that the upper-rank of $Z'_{n,r}(\K)$
equals $r$. Moreover, one checks that $\dim Z'_{n,r}(\K)=\dbinom{s+1}{2}+s(n-s)+1$.

Finally, if $\# \K=2$ then we can give three additional interesting examples of $3$-dimensional affine subspaces
of $\Mats_3(\K)$ consisting of singular matrices only:
$$\calY_1(\K):=\Biggl\{\begin{bmatrix}
a & b & c+1 \\
b & c & 0 \\
c+1 & 0 & 0
\end{bmatrix} \mid (a,b,c)\in \K^3\Biggr\},$$
$$\calY_2(\K):=\Biggl\{\begin{bmatrix}
a & b & a+b+c+1 \\
b & c & 0 \\
a+b+c+1 & 0 & c
\end{bmatrix} \mid (a,b,c)\in \K^3\Biggr\},$$
and
$$\calY_3(\K):=\Biggl\{\begin{bmatrix}
a & b & c \\
b & 0 & a+1 \\
c & a+1 & 0
\end{bmatrix} \mid (a,b,c)\in \K^3\Biggr\}.$$
In each case, one uses the identity $\forall x \in \K, \; x^2=x$ to obtain that the determinant of any matrix in the given space
equals $0$. Note that $\calY_1(\K),\calY_2(\K),\calY_3(\K)$ are all non-linear
affine subspaces of $\Mats_3(\K)$.

Finally, still assuming that $\# \K=2$, we have an exceptional affine subspace of $\Mata_4(\K)$
with upper-rank $2$ and dimension $3$:
$$\calU(\K):=\left\{  \begin{bmatrix}
0 & a & b & c+1 \\
a & 0 & c & 0 \\
b & c & 0 & 0 \\
c+1 & 0 & 0 & 0
\end{bmatrix} \mid (a,b,c) \in \K^3\right\}.$$
Indeed, one computes that
$$\forall (a,b,c)\in \K^3, \quad \begin{vmatrix}
0 & a & b & c+1 \\
a & 0 & c & 0 \\
b & c & 0 & 0 \\
c+1 & 0 & 0 & 0
\end{vmatrix}=(c(c+1))^2=0.$$

\vskip 3mm
Now, we are finally ready to state our results.
We shall start with the alternating matrices, for which there are fewer special cases.

\begin{Not}
Given non-negative integers $n$ and $r$ such that $r<n$ and $r=2s$ for some integer $s$,
we set
$$a_{n,r}^{(1)}:=\dbinom{r+1}{2} \quad \text{and} \quad a_{n,r}^{(2)}:=\dbinom{s}{2}+s(n-s).$$
\end{Not}

\begin{Rem}\label{dimensionremarkalternating}
One checks that
$$\max\bigl(a_{n,2s}^{(1)},a_{n,2s}^{(2)}\bigr)=\begin{cases}
a_{n,2s}^{(1)} & \text{if and only if $5s \geq 2n-3$ or $s=0$} \\
a_{n,2s}^{(2)} & \text{if and only if $5s \leq 2n-3$ or $s=0$.}
\end{cases}$$
\end{Rem}

\begin{theo}[Classification theorem for spaces of alternating matrices]\label{maintheoalternate}
Let $\K$ be an arbitrary field, and let $n$ and $s$ be non-negative integers with $2s<n$.
Let $\calS$ be an affine subspace of $\Mata_n(\K)$ such that $\urk \calS \leq 2s$.
Then,
$$\dim \calS \leq \max\bigl(a_{n,2s}^{(1)},a_{n,2s}^{(2)}\bigr).$$
Moreover, if equality holds then:
\begin{enumerate}[(a)]
\item Either $\calS$ is congruent to $\WA_{n,2s}(\K)$;
\item Or $\calS$ is congruent to $\widetilde{\Mata_{2s+1}(\K)}^{(n)}$;
\item Or $s=1$, $n=4$, $\# \K=2$ and $\calS$ is congruent to $\calU(\K)$.
\end{enumerate}
\end{theo}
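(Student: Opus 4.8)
The plan is to argue by induction, say on $n$, with a secondary induction on $s$ for fixed $n$; the case $s=0$ is trivial, since $\urk\calS\le 0$ forces $\calS=\{0\}$. First I would dispose of the case $\urk\calS<2s$: as the rank of an alternating matrix is even this means $\urk\calS\le 2s-2$, and one checks that $s\mapsto\max(a_{n,2s}^{(1)},a_{n,2s}^{(2)})$ is strictly increasing (indeed $a_{n,2s}^{(2)}-a_{n,2(s-1)}^{(2)}=n-s>0$ because $2s<n$, while $a_{n,2s}^{(1)}$ is obviously increasing), so the inductive bound for $s-1$ is strictly smaller and no extremal space can arise here. Hence I may assume $\urk\calS=2s$ and pick $M_0\in\calS$ with $\rk M_0=2s$; after replacing $\calS$ by a congruent space I can take $M_0=\begin{bmatrix}K&0\\0&0\end{bmatrix}$ with $K\in\Mata_{2s}(\K)$ invertible, whose kernel is $\calK:=\Vect(e_{2s+1},\dots,e_n)$.

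The heart of the matter is the interaction of $\calK$ with all of $\calS$. For a large enough field the pencil argument is decisive: for any $N$ in the translation space $\calS-\calS$ the matrix $M_0+\lambda N$ lies in $\calS$ for every $\lambda\in\K$, so its rank stays $\le 2s$; writing $N$ in the block form adapted to $\calK$ and expanding the Schur complement of the invertible top-left corner as a rational function of $\lambda$, the vanishing of its numerator forces the bottom-right block of $N$ to be $0$. Since $M_0$ already has a vanishing bottom-right block, this shows that $\calK$ is totally isotropic for every matrix of $\calS$, so that each $M\in\calS$ reads $\begin{bmatrix}A&B\\-B^T&0\end{bmatrix}$ with $A\in\Mata_{2s}(\K)$ and $B\in\Mat_{2s,n-2s}(\K)$. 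A short computation, permuting blocks to expose an invertible off-diagonal corner, yields the pointwise inequality $\rk M\ge 2\rk B$; hence the affine space $\calB$ of off-diagonal blocks has $\urk\calB\le s$.

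I would then combine a dimension count with a Flanders-type dichotomy. Writing $\dim\calS=\dim\calB+\dim\calF$, where $\calF$ is the fiber of diagonal blocks over a fixed off-diagonal block, I would bound $\dim\calB$ by applying the affine Flanders theorem (Theorem \ref{Flandersaffine}) to $\calB\subseteq\Mat_{2s,n-2s}(\K)$, whose upper-rank is $\le s$, and control $\calF$ through the rank coupling between $A$ and $B$ that the condition $\rk M\le 2s$ imposes once $\rk B$ is maximal. The two terms of the maximum then correspond to the two alternatives: either the common totally isotropic subspace $\calK$ grows to the maximal dimension $n-s$, which steers $\calS$ towards congruence with $\WA_{n,2s}(\K)$, or the off-diagonal blocks are so constrained that a common kernel of dimension $n-2s-1$ appears, reducing matters to alternating matrices of size $2s+1$ and forcing congruence with $\widetilde{\Mata_{2s+1}(\K)}^{(n)}$. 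The delicate point is precisely this coupling: a large $\calB$ and a large diagonal fiber are mutually exclusive, and quantifying the trade-off so that the total never exceeds $\max(a_{n,2s}^{(1)},a_{n,2s}^{(2)})$ is exactly where the equality statement of Theorem \ref{Flandersaffine}, applied to $\calB$, must be fed back to rigidify $A$.

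The main obstacle, and the reason the statement carries an exceptional case, is the field $\F_2$. There the pencil $M_0+\lambda N$ has only the two points $\lambda\in\{0,1\}$, so the rank condition no longer forces the bottom-right block of $N$ to vanish and $\calK$ need not be totally isotropic for $\calS$; the space $\calU(\K)$ exhibits exactly this failure for $(n,s)=(4,1)$, its off-diagonal entry $c$ in the $\calK$-block being nonzero. I would therefore treat $\#\K=2$ by a separate, more combinatorial analysis: classifying by hand the admissible configurations of the diagonal entries that the pencil previously killed, re-deriving the dichotomy under these weaker constraints, and verifying that the only additional extremal space produced is $\calU(\K)$ in size $(4,1)$. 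The small base cases needed to launch both the generic induction and the $\F_2$ analysis would be checked directly.
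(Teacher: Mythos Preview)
Your route is genuinely different from the paper's, and it has two real gaps.

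First, the pencil step. You claim that for $\#\K>2$ the Schur complement of $K+\lambda A$ in $M_0+\lambda N$ forces the bottom-right block $D$ of every translation vector to vanish. But clearing denominators gives a polynomial in $\lambda$ of degree about $2s$ (or $s{+}1$ if you argue via Pfaffians of $(2s{+}2)$-minors), and you only know it vanishes at the $\lambda\in\K$ where $K+\lambda A$ is invertible. Forcing it to vanish identically therefore needs roughly $\#\K>s{+}1$, not just $\#\K>2$; so for $s\ge 2$ the fields $\F_3,\F_4,\ldots$ are problematic too, not only $\F_2$. The paper avoids pencils altogether: Lemma~\ref{keylemmaalternating} shows, for \emph{every} field, that if $\urk\calS\le 2s$ and $2s\le n-2$ then some hyperplane $H$ has $\dim S_H\le s$. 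This is a single-value Schur-complement argument (at $N\in S_{H_1},S_{H_2}$, not along a pencil), and it is what makes the proof uniform in $\K$.

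Second, and more seriously, even if every $M\in\calS$ does read $\bigl[\begin{smallmatrix}A&B\\-B^T&0\end{smallmatrix}\bigr]$ with blocks $2s,\,n-2s$, your count $\dim\calS=\dim\calB+\dim\calF$ does not close. Flanders gives $\dim\calB\le\max(2s,n-2s)\cdot s$ and trivially $\dim\calF\le\binom{2s}{2}$; for $n>4s$ the sum is $(n-2s)s+\binom{2s}{2}=a^{(2)}_{n,2s}+\binom{s}{2}$, overshooting the target by $\binom{s}{2}$, and for $2s<n\le 4s$ the overshoot over $a^{(1)}_{n,2s}$ is $2s(s-1)$. Your ``coupling between $A$ and $B$'' would have to squeeze out exactly these missing constraints on $\calF$, which means controlling how the column spaces of the rank-$s$ elements of $\calB$ sweep through $\K^{2s}$; that is itself a structural theorem, and you give no mechanism for it. The paper sidesteps this entirely: after choosing $H$ with $\dim S_H\le s$, either $S_H=\{0\}$ and one inducts on $n$ via the $(n{-}1)\times(n{-}1)$ block, or $E_{1,n}-E_{n,1}\in S$ and Lemma~\ref{alternatingcornerlemma} gives $\urk K(\calS)\le 2s-2$ for the central $(n{-}2)\times(n{-}2)$ block, whence $\dim\calS\le\dim K(\calS)+(n-2)+s$; the identity $a^{(2)}_{n,2s}=a^{(2)}_{n-2,2s-2}+(n-2)+s$ then closes the induction with no slack. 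The classification in Section~\ref{section6} follows the same split, using range-compatible maps (Theorem~\ref{alternatingRC}) in the $S_H=\{0\}$ branch and Corollary~\ref{Flanderscor} in the other; Flanders is applied there not to a $2s\times(n-2s)$ block but to an $(n-s-1)\times(s+1)$ block extracted \emph{after} the inductive identification $K(\calS)\cong\WA_{n-2,2s-2}(\K)$, and this is why it delivers the $\WA$ structure directly rather than through a trade-off argument.
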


For sufficiently large fields, the inequality statement from this theorem
was already known (see Remark 1 in \cite{Meshulamsymmetric} -- with a misprint -- or Theorem 1.2 from \cite{GelbordMeshulam} for $p=2$).

It is obvious that the three given cases are pairwise incompatible provided that $s>0$.
Indeed, on the one hand $\calU(\K)$ does not contain the zero matrix, whereas both spaces
$\WA_{n,2s}(\K)$ and $\widetilde{\Mata_{2s+1}}^{(n)}$ do. On the other hand,
there is no non-zero vector of $\K^n$ on which all the matrices of $\WA_{n,2s}(\K)$ vanish,
which yields that $\WA_{n,2s}(\K)$ is not congruent to $\widetilde{\Mata_{2s+1}(\K)}^{(n)}$ if $2s+1<n$;
if $n=2s+1$ and $s>0$ then $\WA_{n,2s}(\K) \subsetneq \Mata_n(\K)=\widetilde{\Mata_{2s+1}(\K)}^{(n)}$ whence
$\dim \WA_{n,2s}(\K)<\dim \widetilde{\Mata_{2s+1}(\K)}^{(n)}$.

Next, we state the corresponding result for spaces of symmetric matrices.
It is somewhat more complicated, due to the characteristic $2$ case.

\begin{Not}
Given non-negative integers $n$ and $r$ such that $r<n$,
we set
$$s^{(1)}_{n,r}:=\dbinom{r+1}{2}$$
and
$$s^{(2)}_{n,r}:=\begin{cases}
\dbinom{s+1}{2}+s(n-s) & \text{if $r=2s$ is even} \\
\dbinom{s+1}{2}+s(n-s)+1 & \text{if $r=2s+1$ is odd.}
\end{cases}
$$
\end{Not}

\begin{Rem}\label{dimensionremarksymmetric}
One checks that
$$\max\biggl(\dbinom{2s+1}{2}, \dbinom{s+1}{2}+s(n-s)\biggr)=\begin{cases}
\dbinom{2s+1}{2} & \text{if and only if $5s \geq 2n-1$ or $s=0$} \\
\dbinom{s+1}{2}+s(n-s) & \text{if and only if $5s \leq 2n-1$ or $s=0$}
\end{cases}$$
and
$$\max\biggl(\dbinom{2s+2}{2}, \dbinom{s+1}{2}+s(n-s)+1\biggr)=\begin{cases}
\dbinom{2s+2}{2} & \text{if and only if $5s \geq 2n-5$ or $s=0$} \\
1+\dbinom{s+1}{2}+s(n-s) & \text{if and only if $5s \leq 2n-5$ or $s=0$.}
\end{cases}$$

In particular, if $n>3$ then $s_{n,n-1}^{(1)}>s_{n,n-1}^{(2)}$.
\end{Rem}

\begin{theo}[Classification theorem for spaces of symmetric matrices]\label{maintheosymmetric}
Let $\K$ be an arbitrary field, and let $n$ and $r$ be non-negative integers with $r< n$.
Let $\calS$ be an affine subspace of $\Mats_n(\K)$ such that $\urk \calS \leq r$.
Then,
$$\dim \calS \leq \max\bigl(s^{(1)}_{n,r},s^{(2)}_{n,r}\bigr).$$
If equality holds, then one of the following situations holds:
\begin{enumerate}[(i)]
\item $\calS$ is congruent to $\widetilde{\calS_r(\K)}^{(n)}$;
\item $\calS$ is congruent to $\WS_{n,r}(\K)$;
\item $\K$ has characteristic $2$, $r$ is even and
$\calS$ is congruent to $\widetilde{\Mata_{r+1}(\K)}^{(n)}$;
\item $\K$ has cardinality $2$ and $\calS$ is congruent to $\widetilde{Z_{r+1}(\K)}^{(n)}$;
\item $\K$ has cardinality $2$, $r$ is odd and $\calS$ is congruent to $Z'_{n,r}(\K)$;
\item $\K$ has cardinality $2$, $r=2$, $n=3$ and $\calS$ is congruent to one of the affine spaces
$\calY_1(\K)$, $\calY_2(\K)$ and $\calY_3(\K)$.
\end{enumerate}
\end{theo}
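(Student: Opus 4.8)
The plan is to establish the dimensional inequality and the classification of the equality case together, by induction on $n$, using the alternating classification (Theorem \ref{maintheoalternate}) as an input in characteristic $2$. One first disposes of the degenerate ranges: the cases $r=0$ and $\urk \calS<r$ reduce to smaller values of $r$, where the strict monotonicity of $r \mapsto \max(s^{(1)}_{n,r},s^{(2)}_{n,r})$ forbids equality, so I may assume $\urk \calS=r \geq 1$. Next, whenever $\#\K>r$ the introductory argument shows that $\Vect(\calS)$ is a \emph{linear} space with $\urk \Vect(\calS) \leq r$ and $\dim \Vect(\calS)=\dim \calS$; this strips away the affine aspect, so in that range the problem becomes the classification of linear spaces over an arbitrary field, and cases (iv)--(vi) cannot occur there (they require a genuinely non-linear affine space, hence $\#\K=2$). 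Thus the two sources of new difficulty, compared with the theorem of Meshulam, Loewy and Radwan, are isolated: characteristic $2$ (responsible for case (iii)) and the field $\F_2$ (responsible for cases (iv)--(vi)).

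For the inductive step I would pick $M_0 \in \calS$ with $\rk M_0=r$ and use congruence to place the radical of $M_0$ on the last $n-r$ coordinates. Every $M \in \calS$ then splits as $M=\begin{bmatrix} A & B \\ B^T & C\end{bmatrix}$ with $A$ of size $r$, and since $\calS$ is affine the whole line $M_0+t(M-M_0)$ lies in $\calS$ for $t \in \K$; evaluating the rank bound along this line (via a Schur complement with respect to the invertible top-left block, when $\#\K$ is large enough) forces $C$, and part of $B$, to be governed by $A$. This produces a surjection of $\calS$ onto a lower-dimensional affine space of symmetric matrices of bounded rank whose fibres have controlled dimension; bounding the fibre and applying the induction hypothesis to the image yields the inequality, while in the equality case the image must itself be extremal, hence congruent to one of the standard models, which is then lifted back to identify $\calS$.

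The core obstruction, and the origin of cases (iii)--(vi), is the analysis in characteristic $2$, and within it over $\F_2$, where the Schur-complement and orthogonality reasoning degenerates. In characteristic $2$ the diagonal of a symmetric matrix is an additive functional independent of the off-diagonal part, so I would split each $M \in \calS$ as $M=\bigl(M-\Diag(\Delta(M))\bigr)+\Diag(\Delta(M))$, observe that the first summand is \emph{alternating}, apply Theorem \ref{maintheoalternate} to that alternating part, and recombine with the information carried by the diagonal map $\calS \to \K^n,\ M \mapsto \Delta(M)$; this is exactly what makes $\widetilde{\Mata_{r+1}(\K)}^{(n)}$ available and produces case (iii). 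Over $\F_2$ the determinant identities collapse through the relation $x^2=x$, precisely as in the explicit computations for $Z_n(\K)$, $Z'_{n,r}(\K)$ and $\calY_1,\calY_2,\calY_3$ in the introduction, so further non-linear affine extremal spaces appear; certifying that the spaces listed in (iv)--(vi) are the \emph{only} such sporadic maxima is the heart, and the hard part, of the classification.

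Finally I would discharge the low-dimensional exceptions by direct inspection. After the reduction has cut the ambient size down, the remaining sporadic possibilities live in $\Mats_3(\F_2)$ (for case (vi)) and, through Theorem \ref{maintheoalternate}, in $\Mata_4(\F_2)$ (the space $\calU(\K)$); over $\F_2$ these involve only finitely many affine spaces, so I would enumerate the maximal ones by a finite check to confirm that $\calY_1(\K),\calY_2(\K),\calY_3(\K)$ are the genuinely exceptional symmetric spaces and that nothing has been overlooked. The inequality itself is fairly robust across all fields; the delicate point throughout is the equality analysis over $\F_2$, where separating the six congruence classes demands the combination of the diagonal/alternating splitting, the inductive lifting, and these finite base-case computations.
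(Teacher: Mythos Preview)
Your plan diverges from the paper's proof in its core mechanism, and at least one of your proposed reductions does not work.

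The central gap is your characteristic-$2$ strategy. You propose to split each $M\in\calS$ as an alternating part $M-\Diag(\Delta(M))$ plus a diagonal part, and then to apply Theorem \ref{maintheoalternate} to the space of alternating parts. But this map does \emph{not} preserve the upper-rank bound: already the single matrix $\begin{bmatrix}1&1\\1&1\end{bmatrix}$ has rank $1$, while its alternating part $\begin{bmatrix}0&1\\1&0\end{bmatrix}$ has rank $2$. So the image space need not satisfy $\urk\leq r$, and Theorem \ref{maintheoalternate} does not apply. The paper never uses this decomposition; instead it handles characteristic $2$ via Lemma \ref{keylemmasymmetriccar2} and, for the awkward case $r=n-1$, Lemma \ref{lastkeylemma}, together with the perturbation identity of Lemma \ref{symmetriccornerlemma3} (which shows that in characteristic $2$ the relevant rank drop happens for $P+CC^T$ rather than for $P$).

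More broadly, your inductive engine---fixing a rank-$r$ matrix $M_0$, placing its radical on the last $n-r$ coordinates, and running a Schur complement along lines through $M_0$---is not the paper's. The paper does not anchor at a maximal-rank element of $\calS$; it works with the translation space $S$ and finds a linear hyperplane $H$ for which $\dim S_H$ is small (Lemmas \ref{keylemmasymmetriccarnot2}--\ref{lastkeylemma}). The case split is then on the structure of $S_H$: if $S_H=\{0\}$ one projects to $\Mats_{n-1}(\K)$; if $S_H$ contains a suitable rank-$2$ matrix one extracts an $(n-2)\times(n-2)$ block with upper-rank $\leq r-2$ (Lemma \ref{symmetriccornerlemma1}); if $S_H$ only has rank-$1$ matrices one uses Lemma \ref{symmetriccornerlemma2}. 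The lifting in the equality case relies on two ingredients you do not mention: the classification of range-compatible linear maps on $\Mats_r(\K)$ and $\Mata_{r+1}(\K)$ (Theorems \ref{symmetricRC} and \ref{alternatingRC}), which pins down the ``last column'' map, and Corollary \ref{Flanderscor} to Flanders's theorem, which reduces the off-diagonal block to the form $B\mapsto BY$. These are exactly what produces the congruences to $\widetilde{\Mats_r(\K)}^{(n)}$, $\WS_{n,r}(\K)$, $\widetilde{\Mata_{r+1}(\K)}^{(n)}$ and $\widetilde{Z_{r+1}(\K)}^{(n)}$; your ``lift back'' step is where the real work lies, and your outline gives no mechanism for it.

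Two smaller points: your assertion that cases (iv)--(vi) ``require a genuinely non-linear affine space'' is not quite right (e.g.\ $Z_{r+1}(\K)$ is linear when $r$ is even); and reducing to linear spaces when $\#\K>r$ is fine but does not help with $\F_2$, which is where all the subtlety lies.
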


Again, in that theorem all the given cases are pairwise incompatible provided that $r>0$, unless $r=1$, in which case $Z'_{n,r}(\K)=\widetilde{Z_{r+1}(\K)}^{(n)}$ and
$\WS_{n,r}(\K)=\widetilde{\Mats_1(\K)}^{(n)}$: we shall now demonstrate this.
\begin{itemize}
\item In any case but case (iii), the space $\calS$ must contain a non-alternating matrix, and hence case (iii)
is incompatible with all the other ones.
\item In case (v), $\calS$ does not contain the zero matrix, in contrast with case (ii). Hence, cases (ii) and (v) are incompatible.
\item The vector space $\{X \in \K^n : \; \forall M \in \calS, \; MX=0\}$ has dimension $n-r$ in case (i),
dimension $0$ in cases (ii), dimension $0$ in case (v) unless $r=1$ (in which case it has dimension $n-2$), and dimension
$n-r-1$ in case (iv). Hence, cases (i), (ii) and (v) are pairwise incompatible, and case (iv) is incompatible with case (i).
Moreover, if $n>r+1$ and $r>1$ then case (iv) is incompatible with cases (ii) and (v).

\item Assume that $n=r+1$, $r>1$ and $\# \K=2$. If $\widetilde{Z_{r+1}(\K)}^{(n)}$ were congruent to $Z'_{n,r}(\K)$ with $r$ odd, we would obtain $n \leq 3$ by Remark \ref{dimensionremarksymmetric}, which would contradict $1<r<n$.
Assume now that $\widetilde{Z_{r+1}(\K)}^{(n)}$ is congruent to $\WS_{n,r}(\K)$. Then, again $n \leq 3$, and $r$
must be even because $\widetilde{Z_{r+1}(\K)}^{(n)}$ must contain the zero matrix. Hence,
$n=3$ and $r=2$, and $Z_3(\K)$ would be congruent to $\WS_{3,2}(\K)$.
Yet, $Z_3(\K)$ contains only two alternating matrices, whereas $\WS_{3,2}(\K)$ contains four of them.
Hence, $Z_3(\K)$ is not congruent to $\WS_{3,2}(\K)$.

We conclude that cases (i) to (v) are pairwise incompatible, unless $r=1$ in which situation cases (iv) and (v) are equivalent and
cases (i) and (ii) are equivalent.
\item Assume now that $\# \K=2$ and $(n,r)=(3,2)$. Then, case (v) cannot occur, and, in cases (i) to (iv),
$\calS$ contains the zero matrix. Thus, cases (i) to (v) are incompatible with case (vi).
\end{itemize}
Finally, assuming that $\# \K=2$, let us prove that $\calY_1(\K)$, $\calY_2(\K)$ and $\calY_3(\K)$ are pairwise non-congruent.
One checks that $\calY_1(\K)$ contains only two rank $1$ matrices (namely, $E_{2,2}$ and $E_{1,1}+E_{1,2}+E_{2,1}+E_{2,2}$),
whereas $\calY_2(\K)$ and $\calY_3(\K)$ only contain one rank $1$ matrix (namely, $E_{1,1}$).
Finally, $\calY_2(\K)$ contains two alternating matrices, whereas $\calY_3(\K)$ contains four,
which proves that they are non-congruent.

\subsection{Main strategy}

As was the case in a lot of recent research on similar topics, our results will be obtained
by induction on both $n$ and $r$. To make the induction process work,
the major key consists in the study of the structure of the subset of matrices with rank $1$ or $2$ in the translation vector space $S$ of
the given affine space $\calS$ of bounded rank symmetric or alternating matrices.
This motivates the following notation:

\begin{Not}
Given a linear hyperplane $H$ of $\K^n$ and a subset
$V$ of $\Mat_n(\K)$, we denote by $V_H$ the set of all matrices $M \in V$
such that
$$\forall (X,Y) \in H^2, \; X^T M Y=0,$$
that is the set of all matrices of $V$ for which $H$ is totally singular.
\end{Not}

To get a clear picture, if $H=\K^{n-1} \times \{0\}$ then $V_H$ consists of all the matrices of $V$ of the following form:
$$\begin{bmatrix}
[0]_{(n-1) \times (n-1)} & [?]_{(n-1) \times 1} \\
[?]_{1 \times (n-1)} & ?
\end{bmatrix}.$$

With those sets, we have a way of differentiating between $\WS_{n,2s}(\K)$ and $\widetilde{\Mats_{2s}(\K)}^{(n)}$ when $2s<n$:
for the first one, we have $\dim V_H \geq s$ for every linear hyperplane $H$ of $\K^n$, whereas for the second one the linear hyperplane
$H:=\K^{n-1} \times \{0\}$ satisfies $\dim V_H=0$.

The first -- crucial -- step, both for the proof of the inequality and for the study of the case of equality in Theorems \ref{maintheoalternate} and \ref{maintheosymmetric}, consists in finding a linear hyperplane $H$ of $\K^n$ for which the dimension of $S_H$ is small.

\vskip 3mm
Say that $H=\K^{n-1} \times \{0\}$, and let $\calS$ be an affine subspace of $\Mats_n(\K)$ with upper-rank at most $r$,
where $\K$ is a field of characteristic not $2$.

Assume first that $S_H=\{0\}$. Then, we can split every matrix $M$ of $\calS$ up as
$$M=\begin{bmatrix}
P(M) & [?]_{(n-1) \times 1} \\
[?]_{1 \times (n-1)} & ?
\end{bmatrix} \quad \text{with $P(M) \in \Mats_{n-1}(\K)$.}$$
Then, $P(\calS)$ is an affine subspace of $\Mats_{n-1}(\K)$ and $\dim \calS=\dim P(\calS)$.
Moreover $\urk P(\calS) \leq \urk \calS$.
By induction on $n$ we can hope to obtain an upper-bound for the dimension of $\calS$.

Next, assume that $S_H$ contains a rank $2$ matrix, say $E_{1,n}+E_{n,1}$ (in the characteristic $2$ case, not all rank $2$ matrices in $S_H$
are congruent to such a matrix, but let us not get distracted by this side issue).
Then, splitting any matrix $M$ of $\calS$ up as
$$M=\begin{bmatrix}
? & [?]_{1 \times (n-2)} & ? \\
[?]_{(n-2) \times 1} & K(M) & [?]_{(n-2) \times 1} \\
? & [?]_{1 \times (n-2)} & ?
\end{bmatrix} \quad \text{with $K(M) \in \Mats_{n-2}(\K)$,}$$
one can prove that $\urk K(\calS) \leq \urk \calS -2$ (see Lemma \ref{symmetriccornerlemma1}), whereas by the rank theorem
$$\dim \calS \leq \dim K(\calS)+(n-1)+\dim S_H.$$
Thus, if we have a good enough upper-bound on the dimension of $S_H$, we can hope to
get the desired outcome by induction on the size of the matrices.
In this prospect, it is of much interest to note that $s_{n,2s}^{(2)}=s_{n,2(s-1)}^{(2)}+(n-1)+s$ and
$s_{n,2s+1}^{(2)}=s_{n,2(s-1)+1}^{(2)}+(n-1)+s$.
In general, we shall try to find $H$ such that $\dim S_H \leq s$ where $s:=\left \lfloor \frac{r}{2}\right\rfloor$.

Finally, assume that $S_H$ contains a rank $1$ matrix, that is $S_H$ contains $E_{n,n}$.
Then, one proves that $\urk P(\calS) \leq \urk \calS-1$ (see Lemma \ref{symmetriccornerlemma2}),
and by the rank theorem $\dim \calS=\dim P(\calS)+\dim S_H$. Then, provided that the dimension of $S_H$ is very small
we can, once more, hope to prove the inequality statement by induction on $n$ and $r$.

\vskip 3mm
The study of spaces with maximal dimension is performed in essentially the same way.
Additional techniques are required there to ``lift" the structure of the extracted block space
(either $P(\calS)$ or $K(\calS)$, depending on the structure of the $S_H$ space)
in order to understand the structure of the whole space $\calS$. We shall be confronted with two main situations.

Suppose first that $S_H=\{0\}$. Then, we will use the induction hypothesis to demonstrate that $P(\calS)$ is congruent to $\widetilde{\Mats_r(\K)}^{(n-1)}$.
We can actually assume that $P(\calS)=\widetilde{\Mats_r(\K)}^{(n-1)}$ and
we want to show that $\calS$ is congruent to $\widetilde{\Mats_r(\K)}^{(n)}$.
We find affine maps $C_1 : \Mats_r(\K) \rightarrow \K^r$, $C_2 : \Mats_r(\K) \rightarrow \K^{n-r-1}$ and $b : \Mats_r(\K) \rightarrow \K$
such that $\calS$ is the space of all matrices of the form
$$\begin{bmatrix}
N & [0]_{r \times (n-r-1)} & C_1(N) \\
[0]_{(n-r-1) \times r} & [0]_{(n-r-1) \times (n-r-1)} & C_2(N) \\
C_1(N)^T & C_2(N)^T & b(N)
\end{bmatrix} \quad \text{with $N \in \Mats_r(\K)$.}$$
We will easily obtain that $C_2=0$. Then, we shall demonstrate that $C_1$ maps every matrix of $\Mats_r(\K)$
to a vector of its range, in other words it is \emph{range-compatible} (see \cite{dSPRC1}). Using recent theorems on range-compatible maps,
we shall deduce that $C_1 : N \mapsto NY$ for some fixed vector $Y \in \K^r$, and thanks to an additional congruence transformation
we shall reduce the situation to the one where $C_1=0$. In that situation it will be easy to obtain that $b=0$
and to conclude that $\calS=\widetilde{\Mats_r(\K)}^{(n)}$.

Next, supposing that $S_H \neq \{0\}$, $\dim S_H \leq s$ and $\K$ has characteristic not $2$,
we will prove that $P(\calS)$ is congruent to $\WS_{n-1,r}(\K)$,
and we will lose no generality in assuming that $P(\calS)=\WS_{n-1,r}(\K)$.
Then, we want to prove that $\calS$ is congruent to $\WS_{n,r}(\K)$.
Here, the affine version of Flanders's theorem will play a major part!
To fix the ideas, say that $r=2s$ and that $P(\calS)=\WS_{n-1,r}(\K)$.
Then, every matrix $M$ of $\calS$ splits as
$$M=\begin{bmatrix}
[?]_{s \times s} & B(M)^T & [?]_{s \times 1} \\
B(M) & [0]_{(n-s-1) \times (n-s-1)} & C(M) \\
[?]_{1 \times s} & C(M)^T & ?
\end{bmatrix}$$
with $B(M) \in \Mat_{n-s-1,s}(\K)$ and $C(M) \in \K^s$.
Then, we aim at proving that, after performing a well-chosen congruence transformation on $\calS$, one can assume that $C=0$.
To achieve this, we will prove that there exists a vector $Y \in \K^s$ such that
$\forall M \in \calS, \; C(M)=B(M)Y$. It was assumed that
$P(\calS)=\WS_{n-1,2s}(\K)$, whence $B(\calS)=\Mat_{n-s-1,s}(\K)$. Then,
we shall remark that every matrix in the affine space
$$\calT:=\Bigl\{\begin{bmatrix}
B(M) & C(M)
\end{bmatrix} \mid M \in \calS\Bigr\}$$
has rank less than or equal to $s$ (this is easily seen from the above form of the matrices in $\calS$
and from the fact that $\urk \calS \leq 2s$).
The vector $Y$ will be obtained by applying Flanders's theorem to the affine space $\calT$.
Once we have reduced the situation to the case when $C=0$, it will be an easy task to prove that the entry in the lower-right corner
is systematically zero for the matrices in $\calS$, yielding that $\calS \subset \WS_{n,2s}(\K)$.
Then, the conclusion will follow by remarking that both spaces must have the same dimension.

\subsection{Structure of the article}

The article is laid out as follows.

In Section \ref{technicalsection}, we set all the basic tools that are needed to solve our problem.
The first one consists in the extraction lemmas which help majorize the rank of specific submatrices of $\calS$ when we have
a matrix with rank $1$ or $2$ in the translation vector space of $\calS$ (Section \ref{extractionsection}).
The next set of results deals with the $S_H$ spaces (Section \ref{rank2section}): we shall prove that in most cases there
exists an $S_H$ space with dimension less than or equal to $s$, where $s:=\left \lfloor \frac{\urk \calS}{2}\right\rfloor$.
Here, most of the difficulty lies in the characteristic $2$ case for symmetric matrices when $\urk \calS=n-1$.
Section \ref{RCsection} consists of a quick review of the known results on range-compatible linear maps
on the spaces $\Mats_n(\K)$ and $\Mata_n(\K)$ (these results are needed to analyze the
spaces of maximal dimension in the event when $S_H=\{0\}$ for some linear hyperplane $H$).
In Section \ref{rankatmost1section}, we shall classify all the $1$-dimensional affine subspaces of symmetric matrices
with rank at most $1$, a result which is the first step in our proof of Theorem \ref{maintheosymmetric}
and which will be used frequently. Finally, in Section \ref{Flanderscorsection}
we prove an important corollary to Flanders's theorem on affine subspaces: this result will be of great use
to analyze the spaces of maximal dimension in the event when there is a linear hyperplane $H$ such that
$0<\dim S_H \leq \bigl\lfloor \frac{\urk \calS}{2}\bigr\rfloor$.

The next six sections are devoted to the proofs of Theorems \ref{maintheoalternate} and \ref{maintheosymmetric}.
We shall always tackle the problems in the increasing order of difficulty, hence always starting with the alternating case,
which features the least amount of technical difficulties, and always ending with the symmetric case over fields with characteristic $2$,
by far the most involving. As far as the inequality statements are concerned, we will first prove
the one in Theorem \ref{maintheoalternate} (Section \ref{section3}),
then the one in Theorem \ref{maintheosymmetric} over fields with characteristic not $2$ (Section \ref{section4}),
and finally the one in Theorem \ref{maintheosymmetric} over fields with characteristic $2$ (Section \ref{section5}).
Using a similar pattern, we shall classify the spaces with maximal dimension first in the alternating case (Section \ref{section6}),
then in the symmetric case over fields with characteristic not $2$ (Section \ref{section7}) and finally over fields
with characteristic $2$ (Section \ref{section8}). In the latter case, most of the difficulty comes
from fields with two elements, and substantial shortcuts (which we will not discuss here)
could be obtained by discarding such fields.

\section{Main technical tools}\label{technicalsection}

\subsection{The canonical situation}\label{extractionsection}

We start with a well-known lemma on the Schur complement.

\begin{lemme}\label{schurcomplementlemma}
Let $r \in \lcro 1,n-1\rcro$, $A \in \GL_r(\K)$, $B \in \Mat_{n-r,r}(\K)$, $C \in \Mat_{r,n-r}(\K)$ and $D \in \Mat_{n-r}(\K)$.
Then,
$$\rk \begin{bmatrix}
A & C \\
B & D
\end{bmatrix} = r+\rk(B A^{-1} C-D).$$
\end{lemme}

\begin{proof}
Indeed, by Gaussian elimination we have
$$\rk \begin{bmatrix}
A & C \\
B & D
\end{bmatrix} =\rk \begin{bmatrix}
A & C \\
0 & D-BA^{-1}C
\end{bmatrix},$$
and the result follows from the fact that $\rk A=r$.
\end{proof}

\begin{lemme}\label{alternatingdeterminantlemma}
Let $A \in \Mata_n(\K)$ and $C \in \K^{n-1}$. Let us split up
$$A=\begin{bmatrix}
P & C_0 \\
-C_0^T & 0
\end{bmatrix} \quad \text{with $P \in \Mata_{n-1}(\K)$ and $C_0 \in \K^{n-1}$,}$$
and set
$$N:=\begin{bmatrix}
[0]_{(n-1) \times (n-1)} & C \\
-C^T & 0
\end{bmatrix}.$$
Assume that $A+tN$ is singular for all $t \in \K$. Then,
$$C^T P^\ad C=0.$$
\end{lemme}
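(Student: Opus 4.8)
The plan is to compute $\det(A+tN)$ explicitly as a polynomial in $t$ and to recognize that the coefficient of $t^2$ is exactly $C^T P^\ad C$, so that the whole lemma reduces to showing that this coefficient vanishes. Since
$$A+tN=\begin{bmatrix} P & C_0+tC \\ -(C_0+tC)^T & 0 \end{bmatrix},$$
I would invoke the bordered-determinant identity $\det\begin{bmatrix} P & D \\ R & f\end{bmatrix}=f\det P-R\,P^\ad\,D$ (the very identity that underlies the computation of $\det M$ for the space $Z_n(\K)$ in the introduction, and which remains valid when $P$ is singular) with $f=0$, $R=-(C_0+tC)^T$ and $D=C_0+tC$. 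This yields
$$\det(A+tN)=(C_0+tC)^T P^\ad (C_0+tC),$$
and expanding in $t$ shows that the coefficient of $t^2$ equals $C^T P^\ad C$.

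I would then split according to the parity of $n$. If $n$ is odd, then $A+tN$ is an alternating matrix of odd size over the ring $\K[t]$, so $\det(A+tN)=0$ identically as a polynomial in $t$; in particular its $t^2$-coefficient $C^T P^\ad C$ is $0$, and the hypothesis is not even needed here.

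Suppose now that $n$ is even, so that $P$ is alternating of odd size $n-1$ and hence $\det P=0$. Then $P\,P^\ad=(\det P)\,I_{n-1}=0$, and a rank count (the columns of $P^\ad$ lie in $\Ker P$) gives $\rk P^\ad\leq 1$, while $(P^\ad)^T=(P^T)^\ad=(-P)^\ad=(-1)^{n-2}P^\ad=P^\ad$ shows that $P^\ad$ is symmetric. A symmetric matrix of rank at most $1$ can be written as $P^\ad=\lambda\,ww^T$ for some $\lambda\in\K$ and $w\in\K^{n-1}$, whence
$$\det(A+tN)=\lambda\,\bigl(w^TC_0+t\,(w^TC)\bigr)^2.$$
If $\lambda=0$ then $P^\ad=0$ and we are done; otherwise the hypothesis gives $(w^TC_0+t\,w^TC)^2=0$ for all $t\in\K$, and since a field has no nonzero nilpotent element this forces $w^TC_0+t\,w^TC=0$ for every $t\in\K$. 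Comparing the values at $t=0$ and $t=1$ yields $w^TC=0$, hence $C^T P^\ad C=\lambda\,(w^TC)^2=0$.

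The delicate point is that $\K$ is an arbitrary field, possibly finite and of characteristic $2$, so one cannot argue that the degree-$2$ polynomial $\det(A+tN)$ is the zero polynomial merely because it vanishes at every point of $\K$. The argument therefore hinges on the structural fact that, up to a scalar, $\det(A+tN)$ is the square of an affine function of $t$ (equivalently, that $P^\ad$ has rank at most $1$ in the even case): once this is secured, the absence of nonzero nilpotents in a field closes the argument from the two evaluations $t=0$ and $t=1$ alone, uniformly in the characteristic. Establishing the rank-one structure and the symmetry of $P^\ad$ is thus the real heart of the matter.
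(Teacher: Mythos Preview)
Your proof is correct and takes a genuinely different route from the paper's. After the same bordered-determinant expansion giving $(tC+C_0)^T P^\ad (tC+C_0)=0$ for all $t\in\K$, the paper subtracts the value at $t=0$ to obtain $(C^T P^\ad C)\,t^2+(C_0^T P^\ad C+C^T P^\ad C_0)\,t=0$ for all $t$, and then splits according to the \emph{cardinality} of $\K$: if $\#\K>2$ the quadratic has at least three roots and is identically zero; if $\#\K=2$ then $P$ (alternating in characteristic $2$) is symmetric, whence $P^\ad$ is symmetric, the linear coefficient is $2C_0^T P^\ad C=0$, and the evaluation at $t=1$ finishes. Your split by the \emph{parity of $n$}, exploiting that odd-order alternating determinants vanish identically and that $P^\ad$ has rank at most $1$ when $n$ is even, is more structural and yields the bonus observation that for odd $n$ the hypothesis is superfluous.

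One small point to patch: the parenthetical ``columns of $P^\ad$ lie in $\Ker P$'' only gives $\rk P^\ad\leq\dim\Ker P$, which is $1$ precisely when $\rk P=n-2$; you should add that if $\rk P\leq n-3$ then every $(n-2)\times(n-2)$ minor of $P$ vanishes and $P^\ad=0$ outright. With that filled in, both arguments are complete; the paper's is shorter and avoids the rank-one decomposition of $P^\ad$, while yours makes the characteristic-independence of the conclusion more transparent.
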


\begin{proof}
Let $t \in \K$.
Computing the determinant of $A+tN$ yields
$$(t\, C+C_0)^T P^\ad (t\, C+C_0)^T=0.$$
Subtracting the case $t=0$ yields
$$\forall t \in \K, \; \bigl(C^T P^\ad C\bigr)\, t^2+\bigl(C_0^T P^\ad C+C^T P^\ad C_0\bigr)\, t=0$$
If $\# \K>2$, we immediately deduce that $C^T P^\ad C=0$. \\
If $\# \K=2$, then $P$ is symmetric and hence $P^\ad$ is symmetric, which leads to
$C_0^T P^\ad C+C^T P^\ad C_0=0$; the case $t=1$ in the above identity then yields $C^T P^\ad C=0$.
\end{proof}

\begin{lemme}\label{symmetricdeterminantlemma}
Let $A \in \Mats_n(\K)$, $C \in \K^{n-1}$ and $a \in \K$. Let us split up
$$A=\begin{bmatrix}
P & C_0 \\
C_0^T & a_0
\end{bmatrix} \quad \text{with $P \in \Mats_{n-1}(\K)$, $C_0 \in \K^{n-1}$ and $a_0 \in \K$,}$$
and set
$$N:=\begin{bmatrix}
[0]_{(n-1) \times (n-1)} & C \\
-C^T & a
\end{bmatrix}.$$
Assume that $A+tN$ is singular for all $t \in \K$.
\begin{enumerate}[(a)]
\item If $\# \K>2$ then
$$C^T P^\ad C=0.$$
\item If $\K$ has characteristic $2$
then
$$C^T P^\ad C=a \det P.$$
\item If $C=0$ and $a \neq 0$ then $\det P=0$.
\end{enumerate}
\end{lemme}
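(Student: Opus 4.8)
The plan is to compute $\det(A+tN)$ as a polynomial of degree at most~$2$ in $t$ and then to exploit the assumption that it vanishes at every point of $\K$, exactly in the spirit of Lemma~\ref{alternatingdeterminantlemma}. The one computational input is the bordered-determinant identity
$$\det\begin{bmatrix} P & D \\ E^T & \alpha\end{bmatrix}=\alpha\det P-E^T P^\ad D,$$
valid for all $P\in\Mat_{n-1}(\K)$, $D,E\in\K^{n-1}$ and $\alpha\in\K$ (it follows from a Schur complement when $P$ is invertible, and then extends to every $P$ since both sides are polynomial in the entries of $P$). As
$$A+tN=\begin{bmatrix} P & C_0+tC \\ (C_0-tC)^T & a_0+ta\end{bmatrix},$$
the identity gives $\det(A+tN)=(a_0+ta)\det P-(C_0-tC)^T P^\ad(C_0+tC)$. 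Expanding and grouping by powers of $t$ produces $\det(A+tN)=\gamma_0+\gamma_1 t+\gamma_2 t^2$, with $\gamma_2=C^T P^\ad C$ and $\gamma_0=a_0\det P-C_0^T P^\ad C_0$; the middle coefficient collapses to $\gamma_1=a\det P$ once one uses that $P^\ad$ is symmetric (the adjugate of a symmetric matrix is symmetric), so that $C_0^T P^\ad C=C^T P^\ad C_0$.

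For part~(a) the hypothesis says that the polynomial $\gamma_0+\gamma_1 t+\gamma_2 t^2$ admits every element of $\K$ as a root. When $\#\K>2$ there are at least three such roots, so this polynomial of degree at most~$2$ must be identically zero; its leading coefficient then gives $\gamma_2=C^T P^\ad C=0$, as desired.

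For part~(b) the field may have only two elements, so one can no longer compare coefficients and I would instead specialize $t$. Setting $t=0$ yields $\gamma_0=0$, and then setting $t=1$ yields $\gamma_1+\gamma_2=0$, i.e.\ $a\det P+C^T P^\ad C=0$. Since $\K$ has characteristic~$2$ this reads exactly $C^T P^\ad C=a\det P$.

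For part~(c), putting $C=0$ turns $\det(A+tN)$ into the affine function $t\mapsto(a_0+ta)\det P-C_0^T P^\ad C_0$; evaluating it at the two distinct points $t=0$ and $t=1$ (available in any field) and subtracting leaves $a\det P=0$, whence $\det P=0$ because $a\neq 0$. I expect the one genuinely delicate point to be part~(b): since $\K$ may be $\F_2$, the root-counting argument of part~(a) is unavailable, and the result rests jointly on the symmetry of $P^\ad$ (to simplify $\gamma_1$) and on the specializations $t=0,1$ combined with the characteristic-$2$ sign identity. Everything else is routine manipulation of the quadratic in $t$.
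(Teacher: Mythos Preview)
Your proof is correct and follows essentially the same route as the paper: expand $\det(A+tN)$ as a quadratic in $t$ via the bordered-determinant identity, use the symmetry of $P^\ad$, and then read off the conclusions from the vanishing at all $t\in\K$ (by root counting when $\#\K>2$, by specializing $t=0,1$ in characteristic $2$, and by subtracting two values in part~(c)). If anything, you handled the $-C^T$ in $N$ more carefully than the paper's own write-up, which tacitly treats $N$ as symmetric in its determinant computation; your use of the symmetry of $P^\ad$ to kill the cross term $C_0^TP^\ad C-C^TP^\ad C_0$ is exactly what makes the coefficient $\gamma_1$ come out as $a\det P$.
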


\begin{proof}
As in the preceding lemma, computing determinants yields
$$\forall t \in \K, \; (t\,a+a_0)\det P=(t\,C+C_0)^T P^\ad (t\,C+C_0).$$
Since $P$ is symmetric, so is $P^\ad$.
Hence, subtracting the special case $t=0$ leads to
$$\forall t \in \K, \; \bigl(C^T P^\ad C\bigr)\,t^2+\bigl(2 C_0^T P^\ad C-a \det P\bigr)\,t=0.$$
If $\# \K>2$, this yields $C^T P^\ad C=0$.
If $\K$ has characteristic $2$, the case $t=1$ yields $C^T P^\ad C=a \det P$. \\
In any case, if $C=0$ and $a \neq 0$ then the case $t=1$ yields $\det P=0$.
\end{proof}

From those two lemmas, a handful of other ones can be deduced:

\begin{lemme}\label{alternatingcornerlemma}
Let $n$ be an integer such that $n \geq 3$. Let $r$ be an even positive integer such that $r<n$.
Let $A \in \Mata_n(\K)$, which we split up as
$$A=\begin{bmatrix}
0 & L & a \\
-L^T & B & C \\
-a & -C^T & 0
\end{bmatrix}$$
with $L \in \Mat_{1,n-2}(\K)$, $C \in \K^{n-2}$, $a \in \K$ and $B \in \Mata_{n-2}(\K)$.
Set $N:=E_{1,n}-E_{n,1}$ and assume that $\rk(A+tN) \leq r$ for all $t \in \K$. \\
Then, $\rk B \leq r-2$.
\end{lemme}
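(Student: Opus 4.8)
The plan is to exploit Lemma \ref{alternatingdeterminantlemma} applied to a suitable $(n-1)$-by-$(n-1)$ submatrix in order to force the smaller block $B$ to have bounded rank. The key observation is that $A+tN$ is obtained from $A$ by modifying only the entries in position $(1,n)$ and $(n,1)$, so the hypothesis $\rk(A+tN)\le r$ for all $t$ should translate, after deleting the last row and column, into a rank condition on the matrix
$$A':=\begin{bmatrix} 0 & L \\ -L^T & B \end{bmatrix}\in\Mata_{n-1}(\K),$$
together with a determinantal identity coming from the perturbation $N$.

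First I would reduce to the case where $\rk A'=r$, or rather argue by contradiction: suppose $\rk B\ge r-1$. Since $B$ is alternating its rank is even, so $\rk B\ge r$ (recall $r$ is even), and as $B$ is a submatrix of $A'$ this already gives $\rk A'\ge r$; in fact I expect $\rk A'$ to be forced up to $r+1$ or to produce an $(r+1)$-by-$(r+1)$ nonsingular submatrix surviving in some $A+tN$, contradicting $\urk(A+tN)\le r$. The cleanest route is the following: delete the first row and first column of $A+tN$. The first row of $A+tN$ is $\begin{bmatrix} 0 & L & a+t\end{bmatrix}$ and the first column is its negative transpose, so deleting them leaves the matrix
$$\begin{bmatrix} B & C \\ -C^T & 0 \end{bmatrix}\in\Mata_{n-1}(\K),$$
which does \emph{not} depend on $t$. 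If $\rk B\ge r$, then since $\rk B$ is even and equals $r$ at least, this alternating matrix has rank at least $r$, independent of $t$. The strategy is then to show that one can choose $t$ so that the deleted row/column does not lie in the span of the remaining ones, thereby boosting the rank to at least $r+1$.

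The technical heart of the argument is therefore a Schur-complement or cofactor-expansion computation, and this is where I would invoke Lemma \ref{alternatingdeterminantlemma} in the contrapositive. Concretely, if $\rk B = r$ I can pass to an $(r+1)$-by-$(r+1)$ principal submatrix of $A+tN$ containing the $B$-block together with the first and last coordinates, so that it takes the form
$$\begin{bmatrix} 0 & L_0 & a+t \\ -L_0^T & B_0 & C_0 \\ -(a+t) & -C_0^T & 0 \end{bmatrix}$$
with $B_0$ an $r$-by-$r$ nonsingular alternating block. Lemma \ref{alternatingdeterminantlemma}, applied with $P=B_0$ and with the perturbation in the $(1,r+1)$-corner, tells me that if this matrix is singular for all $t$ then $C^TB_0^\ad C=0$; but since $B_0$ is invertible and alternating, $B_0^\ad$ is (up to the scalar $\det B_0$) the inverse of an invertible alternating form, and one checks that the resulting quadratic form is genuinely degenerate-free enough to contradict singularity for all $t$ unless the corner entry is constrained. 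The main obstacle I anticipate is the characteristic $2$ case, exactly as in the proof of Lemma \ref{alternatingdeterminantlemma}: there the vanishing of the linear term in $t$ is automatic and one must extract information from the $t=1$ specialization, so I would handle $\#\K=2$ and $\#\K>2$ separately and lean on the symmetry of $B_0^\ad$ to close the characteristic $2$ subcase. Once the contradiction is reached, I conclude that $\rk B\le r-1$, and evenness of $\rk B$ upgrades this to $\rk B\le r-2$.
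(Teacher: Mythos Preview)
Your overall architecture is right---assume $\rk B\ge r$, extract a principal submatrix of size larger than $r$ that retains the $t$-perturbation, and invoke Lemma~\ref{alternatingdeterminantlemma}---but the decisive step is misapplied. In your displayed $(r+2)\times(r+2)$ submatrix (which you call $(r+1)\times(r+1)$; that is a miscount), the perturbation sits in the \emph{last} column, and its direction is $e_1\in\K^{r+1}$. So in the notation of Lemma~\ref{alternatingdeterminantlemma} one must take
\[
P=\begin{bmatrix} 0 & L_0 \\ -L_0^T & B_0 \end{bmatrix}\in\Mata_{r+1}(\K)
\quad\text{and}\quad C=e_1,
\]
and the conclusion is $e_1^T P^{\ad} e_1=0$, i.e.\ the $(1,1)$ entry of $P^{\ad}$ vanishes. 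That entry is precisely $\det B_0$, and since $B_0$ was chosen nonsingular you get the desired contradiction at once. This is exactly what the paper does (after first normalizing $B$ by congruence so that the nonsingular block sits in a fixed corner).

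What you wrote instead---taking $P=B_0$ and concluding $C_0^T B_0^{\ad} C_0=0$---is a genuine gap, not just a bookkeeping slip. Since $B_0$ is alternating and invertible, $B_0^{\ad}=(\det B_0)\,B_0^{-1}$ is again alternating, so the identity $X^T B_0^{\ad} X=0$ holds for \emph{every} vector $X$. Your conclusion is therefore vacuous and cannot produce a contradiction; the phrase ``the resulting quadratic form is genuinely degenerate-free enough'' is exactly wrong here. The separate handling of $\#\K=2$ you anticipate is also unnecessary: Lemma~\ref{alternatingdeterminantlemma} already covers both regimes, so once you apply it correctly the argument is uniform.
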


\begin{proof}
Set $s:=\rk B$ and assume that $s>r-2$.
Then, there are invertible matrices $Q \in \GL_{n-2}(\K)$ and $B' \in \Mata_s(\K) \cap \GL_s(\K)$
such that $QBQ^T=B' \oplus 0_{n-2-s}$.
Setting $P:=I_1 \oplus Q \oplus I_1$, we see that $PNP^T=N$
and
$$P A P^T=\begin{bmatrix}
0 & L' & a \\
-(L')^T & QBQ^T & C' \\
-a & -(C')^T & 0
\end{bmatrix} \quad \text{where $L':=LQ^T$ and $C':=QC$.}$$
Thus, no generality is lost in assuming that $B=B' \oplus 0_{n-2-s}$.
Writing $L=\begin{bmatrix}
L_1 & L_2
\end{bmatrix}$, where $(L_1,L_2) \in \Mat_{1,s}(\K) \times \Mat_{1,n-2-s}(\K)$ and
$C=\begin{bmatrix}
C_1 \\
C_2
\end{bmatrix}$, where $C_1 \in \K^s$ and $C_2 \in \K^{n-2-s}$, we
can set
$$A':=\begin{bmatrix}
0 & L_1 & a \\
-L_1^T & B' & C_1 \\
-a & -C_1^T & 0
\end{bmatrix} \quad \text{and} \quad
N':=\begin{bmatrix}
0 & [0]_{1 \times s} & 1 \\
[0]_{s \times 1} & [0]_{s \times s} & [0]_{s \times 1} \\
-1 & [0]_{1 \times s} & 0
\end{bmatrix}$$
and we learn that $A'+tN'$ is singular for all $t \in \K$ since it is an $(s+2)$ by $(s+2)$ submatrix of $A+tN$ with $s+2>r$.

Using Lemma \ref{alternatingdeterminantlemma}, we deduce that $\det B'=0$, which contradicts the fact that $B'$ is non-singular.
Therefore, $s \leq r-2$, as claimed.
\end{proof}

With the same line of reasoning, we obtain the following results, this time by applying Lemma \ref{symmetricdeterminantlemma}:
\begin{lemme}\label{symmetriccornerlemma1}
Let $n$ be an integer such that $n \geq 3$. Let $r$ be a positive integer such that $r<n$.
Let $A \in \Mats_n(\K)$, which we split up as
$$A=\begin{bmatrix}
a & L & b \\
L^T & B & C \\
b & C^T & c
\end{bmatrix}$$
with $L \in \Mat_{1,n-2}(\K)$, $C \in \K^{n-2}$, $(a,b,c) \in \K^3$ and $B \in \Mats_{n-2}(\K)$.
Let $d \in \K$, set
$$N:=\begin{bmatrix}
0 & [0]_{1 \times (n-2)} & 1 \\
[0]_{(n-2) \times 1} & [0]_{(n-2) \times (n-2)} & [0]_{(n-2) \times 1} \\
1 & [0]_{1 \times (n-2)} & d
\end{bmatrix}$$
and assume that $\rk(A+tN) \leq r$ for all $t \in \K$. \\
If $\# \K>2$ or $d=0$, then $\rk B \leq r-2$.
\end{lemme}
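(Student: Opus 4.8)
The plan is to argue by contradiction, following the proof of Lemma \ref{alternatingcornerlemma} but invoking Lemma \ref{symmetricdeterminantlemma} in place of Lemma \ref{alternatingdeterminantlemma}. Set $s:=\rk B$ and suppose $s>r-2$, so that $s+2>r$. Because $B$ is symmetric of rank $s$, the radical of its bilinear form has dimension $(n-2)-s$, and choosing a basis adapted to that radical produces $Q\in\GL_{n-2}(\K)$ together with an invertible symmetric matrix $B'\in\Mats_s(\K)\cap\GL_s(\K)$ such that $QBQ^T=B'\oplus 0_{n-2-s}$; this decomposition is available over an arbitrary field, including in characteristic $2$. Conjugating $A$ by $P:=I_1\oplus Q\oplus I_1$ fixes $N$, since $P$ is the identity on the first and last coordinates while the central block of $N$ vanishes, and it preserves the displayed block shape of $A$ (replacing $L$ by $LQ^T$, $C$ by $QC$, and $B$ by $QBQ^T$). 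As $\rk(PAP^T+tN)=\rk\bigl(P(A+tN)P^T\bigr)=\rk(A+tN)\le r$ for every $t\in\K$, no generality is lost in assuming $B=B'\oplus 0_{n-2-s}$.

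Next I would extract a principal submatrix. Writing $L=\begin{bmatrix}L_1 & L_2\end{bmatrix}$ and $C=\begin{bmatrix}C_1\\ C_2\end{bmatrix}$ with $L_1\in\Mat_{1,s}(\K)$ and $C_1\in\K^s$, I keep the first row and column, the $s$ rows and columns occupied by $B'$, and the last row and column. This yields the $(s+2)$ by $(s+2)$ symmetric matrix
$$A':=\begin{bmatrix} a & L_1 & b \\ L_1^T & B' & C_1 \\ b & C_1^T & c\end{bmatrix},$$
together with the corresponding principal submatrix $N'$ of $N$, which carries a $1$ in each of its two off-diagonal corners and $d$ in its lower-right entry. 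Since $A'+tN'$ is an $(s+2)$ by $(s+2)$ minor of $A+tN$, a matrix of rank at most $r<s+2$, its determinant vanishes for all $t$; in other words $A'+tN'$ is singular for every $t\in\K$.

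I would then apply Lemma \ref{symmetricdeterminantlemma} to the pair $(A',N')$, splitting off the last coordinate. In the notation of that lemma the relevant matrix is $P=\begin{bmatrix}a & L_1\\ L_1^T & B'\end{bmatrix}$, the vector is $C=e_1\in\K^{s+1}$, and the scalar is $d$. The crucial observation is that $C^T P^\ad C$ equals the $(1,1)$ entry of $P^\ad$, that is the $(1,1)$ cofactor of $P$, which is exactly $\det B'$. The hypothesis now splits cleanly: if $\# \K>2$, part (a) of Lemma \ref{symmetricdeterminantlemma} gives $C^T P^\ad C=0$, hence $\det B'=0$; if $\# \K=2$, then $\K$ has characteristic $2$ and the standing hypothesis forces $d=0$, so part (b) gives $C^T P^\ad C=d\,\det P=0$, again $\det B'=0$. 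Both contradict the invertibility of $B'$, so the assumption $s>r-2$ is untenable and $\rk B\le r-2$.

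The step demanding the most care is this final dichotomy, which is also the reason for the extra hypothesis ``$\# \K>2$ or $d=0$'' that had no analogue in the alternating Lemma \ref{alternatingcornerlemma}. In characteristic $2$ one cannot extract $C^T P^\ad C=0$ from a determinant that is only quadratic in $t$: Lemma \ref{symmetricdeterminantlemma}(b) yields merely $C^T P^\ad C=d\,\det P$, and it is precisely the assumption $d=0$ (automatic when $\# \K=2$) that annihilates the right-hand side and restores the contradiction. This is the new complication created by the perturbation $N$ now being permitted a nonzero diagonal entry $d$ in its lower-right corner, something impossible for the alternating matrices of Lemma \ref{alternatingcornerlemma}.
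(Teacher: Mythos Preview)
Your proof is correct and follows precisely the approach the paper indicates: the paper does not spell out the argument but says only ``with the same line of reasoning'' as Lemma \ref{alternatingcornerlemma}, ``this time by applying Lemma \ref{symmetricdeterminantlemma}'', and you have carried that out faithfully and in more detail than the paper itself. One small terminological slip: $A'+tN'$ is a \emph{submatrix} of $A+tN$, not a ``minor'' (a minor is a determinant), but the intended meaning is clear.
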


\begin{lemme}\label{symmetriccornerlemma2}
Let $n$ be an integer such that $n \geq 2$. Let $r$ be a positive integer such that $r<n$.
Let $A \in \Mats_n(\K)$, which we split up as
$$A=\begin{bmatrix}
P & C \\
C^T & a
\end{bmatrix} \quad \text{with $P \in \Mats_{n-1}(\K)$, $C \in \K^{n-1}$ and $a \in \K$.}$$
If $\rk(A+tE_{n,n})\leq r$ for all $t \in \K$, then $\rk P \leq r-1$.
\end{lemme}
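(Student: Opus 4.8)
The plan is to mimic the proof of Lemma~\ref{alternatingcornerlemma}, simply replacing the appeal to Lemma~\ref{alternatingdeterminantlemma} by part~(c) of Lemma~\ref{symmetricdeterminantlemma}. I argue by contradiction: assuming $\rk P \geq r$, I set $s:=\rk P$ and aim to produce a nonsingular symmetric matrix whose determinant is nevertheless forced to vanish.

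First I would normalize $P$. Since $P$ is symmetric of rank $s$, it is congruent to a matrix of the form $P' \oplus 0_{n-1-s}$ with $P' \in \Mats_s(\K) \cap \GL_s(\K)$; this holds over an arbitrary field because a symmetric bilinear form always restricts to a nondegenerate form on any complement of its radical $\Ker P$, so the reduction is valid in characteristic $2$ as well. Writing $Q \in \GL_{n-1}(\K)$ for the associated congruence, I replace $A$ by $(Q \oplus I_1)\,A\,(Q \oplus I_1)^T$. The key point is that $Q \oplus I_1$ acts as the identity on the last coordinate, so it fixes $E_{n,n}$; hence $(Q\oplus I_1)(A+tE_{n,n})(Q\oplus I_1)^T=(Q\oplus I_1)A(Q\oplus I_1)^T+tE_{n,n}$, and the hypothesis $\rk(A+tE_{n,n}) \leq r$ is preserved. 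Thus no generality is lost in assuming $P = P' \oplus 0_{n-1-s}$ from the outset.

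Next I extract data. Writing $C=\begin{bmatrix} C_1 \\ C_2\end{bmatrix}$ with $C_1 \in \K^s$ and $C_2 \in \K^{n-1-s}$, I consider the principal submatrix $A'$ of $A$ on the rows and columns indexed by $\lcro 1,s\rcro \cup \{n\}$, namely $A'=\begin{bmatrix} P' & C_1 \\ C_1^T & a\end{bmatrix} \in \Mats_{s+1}(\K)$ (which is legitimate since $s \leq n-1$, so $s+1 \leq n$). Because $s \geq r$, the size $s+1$ strictly exceeds $\rk(A+tE_{n,n})$ for every $t \in \K$, so every $(s+1)$ by $(s+1)$ submatrix of $A+tE_{n,n}$ is singular; in particular $A'+tE_{s+1,s+1}$ is singular for all $t \in \K$. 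Applying Lemma~\ref{symmetricdeterminantlemma}(c) to $A'$ with the perturbation $E_{s+1,s+1}$ (that is, with off-diagonal part equal to $0$ and lower-right entry $1 \neq 0$ in the notation of that lemma) yields $\det P'=0$, which contradicts the invertibility of $P'$. Therefore $s \leq r-1$, that is $\rk P \leq r-1$, as claimed.

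I do not expect a genuine obstacle here; the only points deserving care are that the normalizing congruence must leave $E_{n,n}$ untouched (which is why I conjugate only by the leading $(n-1)$-block), and that the decomposition $P \cong P' \oplus 0$ be invoked in its characteristic-free form, where it follows from splitting off the radical rather than from diagonalization.
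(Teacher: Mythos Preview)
Your proof is correct and follows essentially the same approach as the paper: the paper explicitly states that Lemmas~\ref{symmetriccornerlemma1} and~\ref{symmetriccornerlemma2} are obtained ``with the same line of reasoning'' as Lemma~\ref{alternatingcornerlemma}, ``this time by applying Lemma~\ref{symmetricdeterminantlemma}'', and your argument does exactly that, invoking part~(c) (the case $C=0$, $a\neq 0$) to force $\det P'=0$. Your care in noting that the congruence $Q\oplus I_1$ fixes $E_{n,n}$ and that the block reduction $P\cong P'\oplus 0$ is characteristic-free (via splitting off the radical) is appropriate and matches what the paper tacitly uses.
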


Applying congruence transformations yields the following corollary:

\begin{cor}\label{extractioncorollary}
Let $(i,j)\in \lcro 1,n\rcro^2$ be such that $i \neq j$, and let $M \in \Mats_n(\K)$ and $r \geq 2$.
Assume that $\rk(M+t(E_{i,j}+E_{j,i})) \leq r$ for all $t \in \K$.
Then, if we denote by $M_{i,j}$ the submatrix of $M$ obtained by deleting the $i$-th and $j$-th rows and columns,
we find $\rk M_{i,j} \leq r-2$.
\end{cor}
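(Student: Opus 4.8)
The plan is to reduce the statement to Lemma \ref{symmetriccornerlemma1} by means of a congruence with a permutation matrix, which brings the pair of indices $(i,j)$ into the canonical position $(1,n)$ exploited by that lemma.

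First I would dispose of the trivial case $r \geq n$: there $M_{i,j}$ is an $(n-2)$ by $(n-2)$ matrix, so $\rk M_{i,j} \leq n-2 \leq r-2$ and there is nothing to prove. Hence I may assume $r<n$, and since $r \geq 2$ this forces $n \geq 3$, so that the hypotheses $n \geq 3$ and $1 \leq r<n$ of Lemma \ref{symmetriccornerlemma1} are met.

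Next I would choose a permutation $\sigma$ of $\lcro 1,n\rcro$ with $\sigma(i)=1$ and $\sigma(j)=n$, and let $P \in \GL_n(\K)$ be the associated permutation matrix, so that $P^{-1}=P^T$. Congruence by $P$ preserves both symmetry and rank, and one checks directly that $P(E_{i,j}+E_{j,i})P^T=E_{1,n}+E_{n,1}$. Setting $A:=PMP^T$ and $N:=E_{1,n}+E_{n,1}$, I get, for every $t \in \K$,
$$\rk(A+tN)=\rk\bigl(P(M+t(E_{i,j}+E_{j,i}))P^T\bigr)=\rk\bigl(M+t(E_{i,j}+E_{j,i})\bigr) \leq r.$$

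Finally, writing $A$ in the block decomposition of Lemma \ref{symmetriccornerlemma1} with central block $B \in \Mats_{n-2}(\K)$, I observe that $N$ is exactly the matrix appearing in that lemma for the value $d=0$. Invoking the $d=0$ branch of the lemma --- which holds over every field, and this is precisely what keeps the corollary valid even when $\# \K=2$ --- I obtain $\rk B \leq r-2$. Since the rows and columns of $A$ indexed by $\lcro 2,n-1\rcro$ are those of $M$ indexed by $\lcro 1,n\rcro \setminus \{i,j\}=\sigma^{-1}(\lcro 2,n-1\rcro)$, the block $B$ is obtained from $M_{i,j}$ by a simultaneous permutation of its rows and columns; as this preserves rank, $\rk M_{i,j}=\rk B \leq r-2$, as desired. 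The argument involves no real obstacle beyond bookkeeping; the only point deserving attention is that the reduction lands in the $d=0$ case of Lemma \ref{symmetriccornerlemma1}, so that no cardinality restriction on $\K$ is incurred.
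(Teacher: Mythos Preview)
Your proof is correct and follows exactly the approach the paper intends: the paper simply states that the corollary follows by ``applying congruence transformations'' to Lemma~\ref{symmetriccornerlemma1}, and you have filled in those details accurately, including the key observation that the reduction lands in the $d=0$ case so that no restriction on $\#\K$ arises.
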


Our last basic lemma, which deals with symmetric matrices over fields of characteristic $2$, is somewhat more surprising:

\begin{lemme}\label{symmetriccornerlemma3}
Assume that $\K$ has characteristic $2$. Let $r$ be a positive integer such that $r<n$.
Let $A \in \Mats_n(\K)$, and $C \in \K^{n-1}$. Let us split
$$A=\begin{bmatrix}
P & C_0 \\
C_0^T & a_0
\end{bmatrix} \quad \text{with $P \in \Mats_{n-1}(\K)$, $C_0 \in \K^{n-1}$ and $a_0 \in \K$,}$$
and let us set
$$N:=\begin{bmatrix}
[0]_{(n-1) \times (n-1)} & C \\
C^T & 1
\end{bmatrix}.$$
Assume that $\rk(A+tN) \leq r$ for all $t \in \K$. Then, $\rk(P+CC^T) \leq r-1$.
\end{lemme}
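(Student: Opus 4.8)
The plan is to reduce the statement to a purely $r\times r$ determinantal identity, by first peeling off the nonsingular part of $P$ and then invoking the determinant lemma. Throughout I use crucially that $\K$ has characteristic $2$, so that signs disappear and the Frobenius map is additive.

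First I would normalize $P$. Applying a congruence of the form $S=T\oplus 1$ with $T\in\GL_{n-1}(\K)$ replaces the triple $(P,C_0,C)$ by $(TPT^T,TC_0,TC)$, leaves the corner $a_0$ and the shape of $N$ unchanged, conjugates $M_t:=A+tN$ into $SM_tS^T$ (so that $\rk(A+tN)$ is unaffected), and turns $P+CC^T$ into $T(P+CC^T)T^T$ (so that its rank is unaffected as well). Hence I lose no generality in assuming $P=P'\oplus 0_{n-1-\rho}$, where $\rho:=\rk P$ and $P'\in\GL_\rho(\K)\cap\Mats_\rho(\K)$. Splitting $C$ and $C_0$ accordingly into upper parts $C^{(1)},C_0^{(1)}\in\K^\rho$ and lower parts $C^{(2)},C_0^{(2)}\in\K^{n-1-\rho}$, I would then compute $\rk(A+tN)$ by taking the Schur complement of the invertible block $P'$ (Lemma \ref{schurcomplementlemma}), which yields
$$\rk(A+tN)=\rho+\rk\begin{bmatrix} 0 & u_t \\ u_t^T & \delta_t\end{bmatrix},\qquad u_t:=C_0^{(2)}+t\,C^{(2)},$$
for a suitable scalar $\delta_t$. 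The rightmost rank equals $2$ as soon as $u_t\neq 0$, and is at most $1$ otherwise. Note also that $\rho\leq r$, since $P'$ is a submatrix of $A+tN$.

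The argument then splits in two cases. If $C^{(2)}$ and $C_0^{(2)}$ are not both zero, then $u_t\neq 0$ for at least one value of $t$ (if $C^{(2)}=0$ then $u_t=C_0^{(2)}$ is a nonzero constant, and otherwise $t\mapsto u_t$ is non-constant affine, hence nonzero for all but at most one $t$; in particular this works even when $\#\K=2$); the hypothesis then forces $\rho+2\leq r$, and since $\rk(P+CC^T)\leq\rk P+1=\rho+1$ I conclude $\rk(P+CC^T)\leq r-1$. If instead $C^{(2)}=0$ and $C_0^{(2)}=0$, then the middle rows and columns vanish both in $A+tN$ and in $P+CC^T$, so the problem descends to the $\rho\times\rho$ matrix $P'+C^{(1)}(C^{(1)})^T$ with $P'$ invertible; here $\rk(P+CC^T)=\rk\bigl(P'+C^{(1)}(C^{(1)})^T\bigr)\leq\rho$, and there is nothing left to prove unless $\rho=r$.

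The crux, and the step I expect to be the main obstacle, is precisely this last case $\rho=r$: then $A+tN$ reduces to an $(r+1)\times(r+1)$ matrix with the invertible block $P'$ in its top-left corner, which is singular for every $t$, and I must show that the rank-one update $P'+C^{(1)}(C^{(1)})^T$ is itself singular. For this I would apply Lemma \ref{symmetricdeterminantlemma}(b) (with $a=1$) to the reduced matrix, obtaining the characteristic-$2$ identity $(C^{(1)})^T(P')^{\ad}C^{(1)}=\det P'$, and combine it with the matrix determinant lemma $\det\bigl(P'+C^{(1)}(C^{(1)})^T\bigr)=\det P'+(C^{(1)})^T(P')^{\ad}C^{(1)}$. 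Together these give $\det\bigl(P'+C^{(1)}(C^{(1)})^T\bigr)=2\det P'=0$, so $P'+C^{(1)}(C^{(1)})^T$ is singular and $\rk(P+CC^T)\leq r-1$, as wanted. The delicate points to get right are the bookkeeping under the initial congruence (checking that $P+CC^T$ really transforms by congruence while $N$ keeps its normal form) and the verification that $u_t\neq0$ for some admissible $t$ even over $\F_2$; everything else is a short computation.
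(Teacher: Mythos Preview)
Your proof is correct and rests on the same two ingredients as the paper's argument: point (b) of Lemma~\ref{symmetricdeterminantlemma} and the rank-one perturbation formula $\det(P'+CC^T)=\det P'+C^T(P')^{\ad}C$. The organization, however, is different. You normalize $P$ to $P'\oplus 0$ and then split into two cases according to whether the tails $C^{(2)},C_0^{(2)}$ vanish; only in the critical case $\rho=r$ do you invoke the determinantal identity. The paper instead argues by contradiction and normalizes $P+CC^T$ (rather than $P$) to a block $B\oplus 0$ with $B$ invertible of size $s=\rk(P+CC^T)\ge r$; this makes the relevant $(s+1)\times(s+1)$ submatrix automatically singular for all $t$, and the identity $\det(P_1+C_1C_1^T)=0$ contradicts the invertibility of $B$ directly, with no case analysis. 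Your route is slightly longer but has the small advantage of being a direct proof, and your Case~1 gives the sharper conclusion $\rk(P+CC^T)\le\rk P+1\le r-1$ by an elementary rank count, bypassing any determinant computation in that situation.
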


\begin{proof}
Assume on the contrary that $s:=\rk(P+CC^T)$ satisfies $s \geq r$. \\
If we choose $Q \in \GL_{n-1}(\K)$ and set $\widetilde{Q}:=Q \oplus I_1$, then we note that
$$\widetilde{Q} A\widetilde{Q}^T=\begin{bmatrix}
QPQ^T & QC_0 \\
(QC_0)^T & a_0
\end{bmatrix} \quad,  \quad
\widetilde{Q} N\widetilde{Q}^T=\begin{bmatrix}
[0]_{(n-1) \times (n-1)} & QC \\
(QC)^T & 1
\end{bmatrix}$$
and $Q(P+CC^T)Q^T=(QPQ^T)+(QC)(QC)^T$.
Thus, working as in the above proofs, we lose no generality in assuming that
$$P+CC^T=\begin{bmatrix}
B & [0]_{s \times (n-1-s)} \\
[0]_{(n-1-s) \times s} & [0]_{(n-1-s) \times (n-1-s)}
\end{bmatrix} \quad \text{where $B \in \Mats_s(\K) \cap \GL_s(\K)$,}$$
in which case we write
$$P=\begin{bmatrix}
P_1 & [?]_{s \times (n-1-s)} \\
[?]_{(n-1-s) \times s} & [?]_{(n-1-s) \times (n-1-s)}
\end{bmatrix}$$
along the same pattern. Writing
$C=\begin{bmatrix}
C_1 \\
C_2
\end{bmatrix}$ and $C_0=\begin{bmatrix}
C_{0,1} \\
C_{0,2}
\end{bmatrix}$
with $C_1$ and $C_{0,1}$ in $\K^s$ and $C_2$ and $C_{0,2}$ in $\K^{n-1-s}$, we see that
$$B=P_1+C_1 C_1^T.$$
Set $$A':=\begin{bmatrix}
P_1 & C_{0,1} \\
C_{0,1}^T & a_0
\end{bmatrix} \quad \text{and} \quad N':=\begin{bmatrix}
[0]_{s \times s} & C_1 \\
C_1^T & 1
\end{bmatrix},$$
both of which are matrices of $\Mats_{s+1}(\K)$.
Let $t \in \K$:
as $s+1>r$ and $A'+tN'$ is a submatrix of $A+tN$ we find that $A'+tN'$ is singular.
Applying point (b) of Lemma \ref{symmetricdeterminantlemma} yields
$$\det P_1=C_1^T (P_1)^\ad C_1.$$
Yet, a classical formula for rank $1$ perturbations of the determinant states that
$$\det\bigl(P_1+C_1 C_1^T\bigr)=\det (P_1)+C_1^T (P_1)^\ad C_1=0.$$
This yields $\det B=0$, contradicting our assumption that $B$ is non-singular.
Therefore, $\rk(P+CC^T) <r$, as claimed.
\end{proof}

\subsection{On the rank $2$ matrices in the translation vector space of a bounded rank subspace}\label{rank2section}

\begin{lemme}\label{keylemmasymmetriccarnot2}
Assume that $\K$ has characteristic not $2$.
Let $s$ be a non-negative integer such that $2s \leq n$.
Let $\calS$ be an affine subspace of $\Mats_n(\K)$ whose translation vector space we denote by $S$.
Assume that $\dim S_H \geq s$ for every linear hyperplane $H$ of $\K^n$.
Then, $\urk \calS \geq 2s$.
\end{lemme}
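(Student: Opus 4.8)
The plan is to bypass induction entirely and reduce the statement to the construction of a single hyperplane. Write $\rho:=\urk\calS$; it suffices to exhibit one hyperplane $H$ of $\K^n$ with $\dim S_H\le\lfloor\rho/2\rfloor$, since the hypothesis $\dim S_H\ge s$ will then give $s\le\rho/2$, that is $\rho\ge 2s$. The cases $\rho=n$ and $\rho=0$ are immediate (for the former, $2s\le n=\rho$), so I assume $0<\rho<n$. Both the hypothesis and the conclusion are congruence-invariant — a congruence permutes the family of all hyperplanes and preserves ranks — so, fixing $M_0\in\calS$ with $\rk M_0=\rho$, I may assume after a congruence that $M_0=\begin{bmatrix} J & [0] \\ [0] & [0]\end{bmatrix}$ with $J\in\GL_\rho(\K)\cap\Mats_\rho(\K)$; thus the radical of $M_0$ is spanned by the last $n-\rho$ basis vectors.

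I then take $H:=\{x\in\K^n:\ x_n=0\}$, which avoids the radical vector $e_n$. A matrix $N\in S_H$ is precisely one whose leading $(n-1)\times(n-1)$ block is zero, so $N=\begin{bmatrix} [0] & c \\ c^T & d\end{bmatrix}$ with $c=\begin{bmatrix} c_1\\ c_2\end{bmatrix}\in\K^\rho\times\K^{n-1-\rho}$ and $d\in\K$. Because $M_0+tN\in\calS$ has rank at most $\rho$ for every $t\in\K$, all minors of $M_0+tN$ of size $\rho+1$ and $\rho+2$ vanish for every $t$. Applying the Schur complement lemma \ref{schurcomplementlemma} to the invertible corner $J$, the principal minor on rows and columns $1,\dots,\rho,n$ equals $\det J\,(d\,t-(c_1^TJ^{-1}c_1)\,t^2)$; its vanishing for all $t$, together with $\#\K\ge 3$, forces $d=0$ and $c_1^TJ^{-1}c_1=0$. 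Feeding these relations back, the principal minor on rows and columns $1,\dots,\rho,\rho+i,n$ reduces to $-\det J\,t^2c_{2,i}^2$, and its vanishing gives $c_{2,i}=0$ for each $i$, hence $c_2=0$.

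Consequently every $N\in S_H$ is determined by the single vector $c_1$, the assignment $N\mapsto c_1$ is a linear isomorphism of $S_H$ onto a linear subspace $L\subseteq\K^\rho$, and each vector of $L$ is isotropic for the nondegenerate form $x\mapsto x^TJ^{-1}x$. Here the characteristic hypothesis enters decisively a second time: polarization turns ``every vector of $L$ is isotropic'' into ``$L$ is totally isotropic'', whence $\dim S_H=\dim L\le\lfloor\rho/2\rfloor$, as required. I expect the main obstacle to be precisely these two uses of $2\neq 0$: pinning down the perturbation by forcing a degree-$2$ polynomial in $t$ to have vanishing coefficients (which needs $\#\K>2$), and upgrading an isotropic cone to a genuine subspace. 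Both fail over $\F_2$, which is exactly why the characteristic-$2$ situation will demand a separate and more delicate treatment.
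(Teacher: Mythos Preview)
Your argument is correct and follows essentially the same route as the paper's proof: normalize a maximal-rank matrix, choose a hyperplane whose normal lies in its radical, exploit $\rk(M_0+tN)\le\rho$ for all $t$ to force the ``diagonal'' data to vanish and the surviving vector $c_1$ to be isotropic, then bound $\dim S_H$ by the Witt index. The only cosmetic difference is that the paper applies the full Schur complement identity $D(N)=C(N)^TJ^{-1}C(N)$ to the whole lower-right block at once (which simultaneously kills your $d$ and $c_2$), whereas you compute two principal minors separately; the content is the same.
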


\begin{proof}
Set $r:=\urk \calS$ and assume that $r<2s$.
Let $A \in \calS$ be of rank $r$. Replacing $\calS$ with a congruent subspace, we see that no generality is lost in assuming that
$$A=\begin{bmatrix}
P & [0]_{r \times (n-r)} \\
[0]_{(n-r) \times r} & [0]_{(n-r) \times (n-r)}
\end{bmatrix}$$
for some $P \in \GL_r(\K) \cap \Mats_r(\K)$.
Let us consider the hyperplane $H$ of $\K^n$ defined by the equation $x_{r+1}=0$ in the canonical basis.
For any matrix $N \in \calS_H$, let us write
$$N=\begin{bmatrix}
[0]_{r \times r} & C(N) \\
C(N)^T & D(N)
\end{bmatrix} \quad \text{with $C(N) \in \Mat_{r,n-r}(\K)$ and $D(N) \in \Mats_{n-r}(\K)$,}$$
and let us further split
$$C(N)=\begin{bmatrix}
C_1(N) & [0]_{r \times (n-r-1)}
\end{bmatrix} \quad \text{with $C_1(N) \in \K^r$.}$$
Note that
$$A+N=\begin{bmatrix}
P & C(N) \\
C(N)^T & D(N)
\end{bmatrix},$$
whereas $\rk C(N) \leq 1$. As $\rk(A+N) \leq r$, we deduce from Lemma \ref{schurcomplementlemma} that
$D(N)=C(N)^T P^{-1} C(N)$.
Applying this to $tN$ we deduce that $\forall t \in \K, \; tD(N)=t^2\,C(N)^T P^{-1} C(N)$, and since $\K$ has more than $2$ elements this yields
$$D(N)=0 \quad \text{and} \quad C(N)^T P^{-1} C(N)=0.$$
It follows from the first identity that $\dim C_1(S_H)=\dim S_H \geq s$,
and the second one yields that $C_1(S_H)$ is a totally isotropic subspace of $\K^r$
for the regular quadratic form $X \mapsto X^T P^{-1} X$, leading to $\dim C_1(S_H) \leq \frac{r}{2}<s$.
This is a contradiction.

We conclude that $r \geq 2s$, as claimed.
\end{proof}

\begin{lemme}\label{keylemmaalternating}
Let $s$ be a non-negative integer such that $2s \leq n$.
Let $\calS$ be an affine subspace of $\Mata_n(\K)$ whose translation vector space we denote by $S$.
Assume that $\dim S_H \geq s$ for every linear hyperplane $H$ of $\K^n$.
Then, $\urk \calS \geq 2s$.
\end{lemme}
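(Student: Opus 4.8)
The plan is to adapt the proof of Lemma \ref{keylemmasymmetriccarnot2} to the alternating setting, the essential new difficulty being that for an \emph{alternating} form every vector is isotropic, so a single hyperplane cannot produce the isotropy bound that drove the symmetric argument. Instead I will exploit \emph{two} distinct hyperplanes together with a cross-term, which makes the argument work over an arbitrary field. Set $r:=\urk\calS$; since $\calS\subseteq\Mata_n(\K)$ the rank $r$ is even, say $r=2t$, and I argue by contradiction, assuming $t<s$ (we may clearly assume $s\geq 1$). Pick $A\in\calS$ with $\rk A=r$; after replacing $\calS$ by a congruent subspace — which is harmless, since congruence merely permutes the hyperplanes $H$ and hence preserves the hypothesis — I may assume $A=\begin{bmatrix} P & 0 \\ 0 & 0\end{bmatrix}$ with $P\in\GL_r(\K)\cap\Mata_r(\K)$. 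Because $r\leq 2s-2$ and $2s\leq n$, we have $n-r\geq 2$, so the two kernel directions $e_{r+1}$ and $e_{r+2}$ are available.

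First I analyse $S_{H_1}$ and $S_{H_2}$ for the hyperplanes $H_1:=\{x_{r+1}=0\}$ and $H_2:=\{x_{r+2}=0\}$. A matrix $N\in S_{H_i}$ is supported on the single row and column indexed by $r+i$, so it is determined by its coupling $u\in\K^r$ with the $P$-block (coordinates $1,\dots,r$) together with its coupling with the remaining coordinates. Writing $A+N$ in block form and applying Lemma \ref{schurcomplementlemma} with the invertible corner $P$, the key observation is that $P^{-1}$ is again alternating, so that $u^TP^{-1}u=0$ automatically; hence the $P$-coupling contributes nothing to the Schur complement and $\rk(A+N)=r+\rk(\text{coupling of }e_{r+i}\text{ with the other kernel coordinates})$. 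Since $\rk(A+N)\leq r$, that last coupling must vanish. Consequently $N\mapsto u$ is injective on $S_{H_i}$; writing $U_i\subseteq\K^r$ for its image, we obtain $\dim U_i=\dim S_{H_i}\geq s$.

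The crucial step is to feed a cross-term into the rank bound. Given $N\in S_{H_1}$ and $N'\in S_{H_2}$, with respective $P$-couplings $u,u'\in\K^r$, the matrix $A+N+N'$ again lies in $\calS$ and so has rank at most $r$; by the previous paragraph its only nonzero part is the $(r+2)\times(r+2)$ alternating matrix $\begin{bmatrix} P & u & u' \\ -u^T & 0 & 0 \\ -u'^T & 0 & 0\end{bmatrix}$, whose Schur complement against $P$ is the $2\times 2$ block $\begin{bmatrix} u^TP^{-1}u & u^TP^{-1}u' \\ u'^TP^{-1}u & u'^TP^{-1}u'\end{bmatrix}$. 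The diagonal entries vanish (alternating), while $u'^TP^{-1}u=-u^TP^{-1}u'$ because $P^{-1}$ is skew, so this block equals $\begin{bmatrix} 0 & \omega \\ -\omega & 0\end{bmatrix}$ with $\omega:=u^TP^{-1}u'$. Its rank is $0$ or $2$, and $\rk(A+N+N')\leq r$ forces $\omega=0$. Thus $u^TP^{-1}u'=0$ for all $u\in U_1$ and $u'\in U_2$.

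To conclude, note that $\langle u,v\rangle:=u^TP^{-1}v$ is a nondegenerate alternating form on $\K^r$ (nondegenerate because $P^{-1}$ is invertible), and the previous step says precisely that $U_2\subseteq U_1^{\perp}$. Nondegeneracy gives $\dim U_1^{\perp}=r-\dim U_1$, whence $\dim U_1+\dim U_2\leq r=2t$; but $\dim U_1,\dim U_2\geq s$, so $2s\leq 2t$, contradicting $t<s$. I expect the main obstacle to be conceptual rather than computational: recognising that, unlike in the symmetric case, a single hyperplane yields \emph{no} constraint on $U_1$ (every vector being isotropic for an alternating form), and that the needed symplectic orthogonality $U_1\perp U_2$ emerges only from the mixed term $u^TP^{-1}u'$ created by combining the two kernel directions. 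The remaining verifications — that $P^{-1}$ is alternating and that the Schur complements are as stated — are routine.
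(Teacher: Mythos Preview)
Your proof is correct and follows essentially the same approach as the paper's: both reduce to the normal form $A=P\oplus 0$, show via the Schur complement that for each $N\in S_{H_i}$ the coupling with the remaining kernel coordinates must vanish, and then extract the symplectic orthogonality $U_1\perp U_2$ from the cross-term in $A+N_1+N_2$. The only cosmetic difference is that the paper phrases the first step as ``$D(N)$ is alternating of rank $\leq 1$, hence $D(N)=0$'' rather than computing $u^TP^{-1}u=0$ directly, but the two arguments are equivalent.
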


\begin{proof}
Set $r:=\urk \calS$ and assume that $r<2s$. Note that $r=2s'$ for some $s' \in \lcro 0,s-1\rcro$.

Let $A \in \calS$ be of rank $r$. Replacing $\calS$ with a congruent subspace, we see that no generality is lost in assuming that
$$A=\begin{bmatrix}
P & [0]_{r \times (n-r)} \\
[0]_{(n-r) \times r} & [0]_{(n-r) \times (n-r)}
\end{bmatrix}$$
for some $P \in \GL_r(\K) \cap \Mata_r(\K)$.
Let $H$ be an arbitrary linear hyperplane of $\K^n$ that includes $\K^r \times \{0\}$.
For any matrix $N \in S_H$, let us write
$$N=\begin{bmatrix}
[0]_{r \times r} & C(N) \\
-C(N)^T & D(N)
\end{bmatrix} \quad \text{with $C(N) \in \Mat_{r,n-r}(\K)$ and $D(N) \in \Mata_{n-r}(\K)$.}$$
Applying Lemma \ref{schurcomplementlemma}, we find $D(N)=-C(N)^T P^{-1} C(N)$.
As $\rk C(N) \leq 1$, this yields $\rk D(N) \leq 1$. Since $D(N)$ is alternating, we conclude that $D(N)=0$.

Noting that $n-r \geq 2$, we can consider the hyperplanes $H_1$ of $H_2$ of $\K^n$ defined, respectively, by the equations $x_{r+1}=0$ and $x_{r+2}=0$
in the standard basis. Let $(N_1,N_2) \in S_{H_1} \times S_{H_2}$. Then, we can write
$$C(N_1)=\begin{bmatrix}
C_1(N_1) & [0]_{r \times 1} & [0]_{r \times (n-r-2)}
\end{bmatrix} \quad \text{and} \quad
C(N_2)=\begin{bmatrix}
[0]_{r \times 1} & C_2(N_2) & [0]_{r \times (n-r-2)}
\end{bmatrix}$$
with $C_1(N_1)$ and $C_2(N_2)$ in $\K^r$.

As $\rk(A+N_1+N_2)\leq r$ and $D(N_1)+D(N_2)=0$, we deduce once more from Lemma \ref{schurcomplementlemma} that
$$\begin{bmatrix}
C_1(N_1)^T P^{-1} C_1(N_1) & C_1(N_1)^T P^{-1}C_2(N_2) \\
C_2(N_2)^T P^{-1} C_1(N_1) & C_2(N_2)^T P^{-1}C_2(N_2)
\end{bmatrix}=0,$$
and in particular
$$C_1(N_1)^T P^{-1} C_2(N_2)=0.$$
It follows that the linear subspaces $C_1(S_{H_1})$ and $C_2(S_{H_2})$
are orthogonal for the non-degenerate alternating bilinear form $(X,Y) \mapsto X^T P^{-1} Y$ on $\K^r$.
Therefore, $\dim C_1(S_{H_1})+\dim C_2(S_{H_2}) \leq r$.
On the other hand, $\dim C_1(S_{H_1})=\dim S_{H_1} \geq s$ and $\dim C_2(S_{H_2})=\dim S_{H_2} \geq s$,
contradicting $2s>r$. Therefore, $r \geq 2s$, as claimed.
\end{proof}

\begin{lemme}\label{keylemmasymmetriccar2}
Assume that $\K$ has characteristic $2$.
Let $s$ be a non-negative integer such that $2s \leq n$.
Let $\calS$ be an affine subspace of $\Mats_n(\K)$ whose translation vector space we denote by $S$. \\
Assume that $\dim S_H \geq s$ for every linear hyperplane $H$ of $\K^n$.
\begin{enumerate}[(a)]
\item If $n>2s$ then $\urk \calS \geq 2s$.
\item If $n=2s$ then $\urk \calS \geq n-1$.
\end{enumerate}
\end{lemme}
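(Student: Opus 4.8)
The plan is to follow the two-hyperplane orthogonality argument of the alternating case (Lemma \ref{keylemmaalternating}) rather than the single-hyperplane isotropy argument of Lemma \ref{keylemmasymmetriccarnot2}. The reason is that in characteristic $2$ the quadratic form $X \mapsto X^T P^{-1} X$ attached to a symmetric Gram matrix $P$ is degenerate (its polar form vanishes identically), so a subspace on which it vanishes may have dimension as large as $r-1$, and the bound $\dim C_1(S_H) \le r/2$ that drives the characteristic-not-$2$ proof is lost. What survives is that the bilinear form $b\colon (X,Y) \mapsto X^T P^{-1} Y$ is still non-degenerate, and I would exploit orthogonality for $b$ between two distinct $S_H$ spaces.

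Concretely, I set $r := \urk \calS$ and argue by contradiction, assuming $r \le 2s-1$ in case (a) and $r \le 2s-2$ in case (b). Picking $A \in \calS$ with $\rk A=r$, and using that the hypothesis ``$\dim S_H \ge s$ for every hyperplane $H$'' is invariant under congruence and that the radical of a symmetric bilinear form admits a complement on which the form is non-degenerate, I would reduce to $A = P \oplus 0_{n-r}$ with $P \in \GL_r(\K) \cap \Mats_r(\K)$. The first key observation is purely arithmetical: in both cases the bound on $r$ forces $n-r \ge 2$, so the two distinct coordinates $r+1$ and $r+2$ are available; I then introduce the hyperplanes $H_1 := \{x_{r+1}=0\}$ and $H_2 := \{x_{r+2}=0\}$.

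The core computation is an application of Lemma \ref{schurcomplementlemma}, evaluated directly on matrices of $\calS$ (so no cardinality assumption is needed, unlike in the isotropy route). A matrix $N_1 \in S_{H_1}$ is supported on row and column $r+1$; writing its top part as $C_1(N_1)\in\K^r$ and applying Lemma \ref{schurcomplementlemma} to $A+N_1$, whose top-left $r\times r$ block is the invertible $P$, the vanishing of the Schur complement shows that the entries of $N_1$ outside rows $1,\dots,r$ of column $r+1$ are zero and that $N_1$ is determined by $C_1(N_1)$. Hence $C_1$ is injective on $S_{H_1}$ and $\dim C_1(S_{H_1}) = \dim S_{H_1} \ge s$, and symmetrically $\dim C_2(S_{H_2}) \ge s$. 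Applying Lemma \ref{schurcomplementlemma} to $A+N_1+N_2$ and inspecting the $(r+1,r+2)$ entry of the Schur complement, the corresponding entry of $N_1+N_2$ has just been shown to vanish, so the rank condition collapses to $C_1(N_1)^T P^{-1} C_2(N_2)=0$. Thus $C_1(S_{H_1})$ and $C_2(S_{H_2})$ are orthogonal for the non-degenerate form $b$, whence $\dim C_1(S_{H_1}) + \dim C_2(S_{H_2}) \le r$ and finally $2s \le r$, contradicting the assumption on $r$ in each case.

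I expect the only genuinely delicate point to be recognizing that the characteristic-$2$ degeneracy defeats the isotropy bound and that the right substitute is orthogonality between two different hyperplanes. Once this is seen, the split between (a) and (b) is forced by the single inequality $n-r\ge 2$ needed to free two coordinates: this is exactly why case (b) yields only $\urk \calS \ge n-1$, since when $n=2s$ and $r=2s-1$ one has $n-r=1$, only one hyperplane is available, and the argument cannot be run.
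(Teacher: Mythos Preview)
Your proof is correct and follows essentially the same two-hyperplane orthogonality argument as the paper, including the reduction to $A=P\oplus 0_{n-r}$, the Schur-complement computation on $A+N_1$, $A+N_2$, and $A+N_1+N_2$, and the extraction of the $(r+1,r+2)$ entry to obtain $C_1(N_1)^T P^{-1} C_2(N_2)=0$. One small imprecision: the Schur complement gives $(N_1)_{r+1,r+1}=C_1(N_1)^T P^{-1} C_1(N_1)$, which need not vanish, but this does not affect your argument since you only use that the off-diagonal entries of $D(N_1)$ are zero and that $N_1\mapsto C_1(N_1)$ is injective.
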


\begin{proof}
Set $r:=\urk \calS$.
Assume that $r<2s$ and $r \leq n-2$.
Let $A \in \calS$ be of rank $r$. Replacing $\calS$ with a congruent subspace, we see that no generality is lost in assuming that
$$A=\begin{bmatrix}
P & [0]_{r \times (n-r)} \\
[0]_{(n-r) \times r} & [0]_{(n-r) \times (n-r)}
\end{bmatrix} \quad \text{for some $P \in \Mats_r(\K) \cap \GL_r(\K)$.}$$
Let $H$ be an arbitrary linear hyperplane of $\K^n$ that includes $\K^r \times \{0\}$.
For any matrix $N \in S_H$, let us write
$$N=\begin{bmatrix}
[0]_{r \times r} & C(N) \\
C(N)^T & D(N)
\end{bmatrix} \quad \text{with $C(N) \in \Mat_{r,n-r}(\K)$ and $D(N) \in \Mats_{n-r}(\K)$.}$$
Applying Lemma \ref{schurcomplementlemma}, we find
$$D(N)=C(N)^T \,P^{-1}\, C(N).$$
However, as $\rk C(N)=1$ we deduce that $\rk D(N) \leq 1$, and if $C(N)=0$ then $D(N)=0$.
In particular we have $\dim C(S_H)=\dim S_H \geq s$.
Then, we proceed as in the proof of Lemma \ref{keylemmaalternating}:
we consider the hyperplanes $H_1$ of $H_2$ of $\K^n$ defined, respectively, by the equations $x_{r+1}=0$ and $x_{r+2}=0$ in the standard basis.
Let $(N_1,N_2) \in S_{H_1} \times S_{H_2}$. Then, we write
$$C(N_1)=\begin{bmatrix}
C_1(N_1) & [0]_{r \times 1} & [0]_{r \times (n-r-2)}
\end{bmatrix} \quad ; \quad
C(N_2)=\begin{bmatrix}
[0]_{r \times 1} & C_2(N_2) & [0]_{r \times (n-r-2)}
\end{bmatrix}$$
with $C_1(N)$ and $C_2(N_2)$ in $\K^r$, and we further write
$$D(N_1)=\begin{bmatrix}
? & 0 & [0]_{1 \times (n-r-2)} \\
0 & 0 & [0]_{1 \times (n-r-2)} \\
[0]_{(n-r-2) \times 1} & [0]_{(n-r-2) \times 1} & [0]_{(n-r-2) \times (n-r-2)}
\end{bmatrix}$$
and
$$D(N_2)=\begin{bmatrix}
0 & 0 & [0]_{1 \times (n-r-2)} \\
0 & ? & [0]_{1 \times (n-r-2)} \\
[0]_{(n-r-2) \times 1} & [0]_{(n-r-2) \times 1} & [0]_{(n-r-2) \times (n-r-2)}
\end{bmatrix}.$$
Lemma \ref{schurcomplementlemma} applied to $A+N_1+N_2$ yields
$$D(N_1)+D(N_2)=\bigl(C(N_1)^T+C(N_2)^T\bigr) P^{-1} \bigl(C(N_1)+C(N_2)\bigr).$$
Evaluating both sides of this identity at the $(1,2)$-spot, we deduce that
$$C_1(N_1)^T P^{-1} C_2(N_2)=0.$$
Therefore, $C_1(S_{H_1})$ and $C_2(S_{H_2})$ are orthogonal subspaces for the non-degenerate symmetric bilinear form
$(X,Y) \mapsto X^T P^{-1} Y$ on $\K^r$, which yields
$$\dim C_1(S_{H_1})+\dim C_2(S_{H_2}) \leq r,$$
contradicting the assumption that $2s>r$.

It follows that either $r \geq 2s$ or $r \geq n-1$, yielding the claimed results.
\end{proof}

The above three lemmas will systematically be used in the form of their contraposition.
Hence, with an upper-bound on the upper-rank of $\calS$, we shall find a linear hyperplane $H$ of $\K^n$
for which the dimension of $S_H$ is small. There is only one situation in which this fails: it is the case when $n$
is odd, $\K$ has characteristic $2$, the upper-rank is $n-1$ and we are dealing with a subspace of symmetric matrices.
Then, we have a simple example which shows that it is possible that no linear hyperplane $H$ of $\K^n$ satisfies
$\dim S_H \leq \frac{n-1}{2}\cdot$ Assume indeed that $\K$ has characteristic $2$ and that $n$ is odd, and consider
the subspace $\calS=\Mata_n(\K)$ of $\Mats_n(\K)$. It has upper-rank $n-1$, yet one sees that
$\dim S_H=n-1$ for every linear hyperplane $H$ of $\K^n$. To circumvent that problem, we shall prove a result that deals with that special case:

\begin{lemme}\label{lastkeylemma}
Assume that $\K$ has characteristic $2$ and that $n \geq 2$.
Let $\calS$ be an affine subspace of $\Mats_n(\K)$ in which every matrix is singular.
Denote by $S$ its translation vector space.  \\
Then, either $n$ is odd and $\calS=\Mata_n(\K)$, or
there exists a linear hyperplane $H$ of $\K^n$ such that $S_H$ does not include $\bigl(\Mata_n(\K)\bigr)_{H.}$
\end{lemme}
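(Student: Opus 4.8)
The plan is to prove the contrapositive: assuming that $\bigl(\Mata_n(\K)\bigr)_H \subseteq S_H$ for \emph{every} linear hyperplane $H$ of $\K^n$, I will show that $n$ is odd and $\calS=\Mata_n(\K)$. The first step is to extract from this hypothesis the much stronger information that $\Mata_n(\K)\subseteq S$. Indeed, let $e_1,\dots,e_n$ denote the canonical basis of $\K^n$. For distinct indices $i,j$ the rank-$2$ matrix $E_{i,j}+E_{j,i}$ is alternating (recall that in characteristic $2$ the alternating matrices are exactly the symmetric matrices with zero diagonal), and it belongs to $\bigl(\Mata_n(\K)\bigr)_H$ for the hyperplane $H:=\Vect(e_k : k\neq i)$, because $x^T(E_{i,j}+E_{j,i})y=x_iy_j+x_jy_i$ vanishes whenever $x,y\in H$ (both then have vanishing $i$-th coordinate). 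By assumption $E_{i,j}+E_{j,i}\in S_H\subseteq S$, and since the matrices $E_{i,j}+E_{j,i}$ with $i<j$ form a basis of $\Mata_n(\K)$, we conclude that $\Mata_n(\K)\subseteq S$.

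Next I would reduce the whole problem to a statement about diagonals. Pick any $M\in\calS$ and set $e:=\Delta(M)$. Because $\Mata_n(\K)\subseteq S$ and a symmetric matrix in characteristic $2$ is the sum of $\Diag\bigl(\Delta(M)\bigr)$ and an alternating matrix, the coset $M+\Mata_n(\K)$ equals $\Diag(e)+\Mata_n(\K)$, that is to say the set of \emph{all} symmetric matrices with diagonal $e$; moreover it is contained in $\calS$ and therefore consists of singular matrices only. Thus everything hinges on the following question: for which vectors $e$ is every symmetric matrix with diagonal $e$ singular?

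The technical heart consists of two sub-lemmas answering this. \textbf{(A)} If $m$ is even, then for every $e\in\K^m$ there is an \emph{invertible} symmetric matrix with diagonal $e$: one takes a block-diagonal matrix made of $2\times 2$ blocks $\begin{bmatrix} \alpha & c\\ c & \beta\end{bmatrix}$, whose determinant equals $\alpha\beta+c^2$, and chooses $c$ with $c^2\neq\alpha\beta$, which is possible over any field because the Frobenius map $c\mapsto c^2$ is injective and $\#\K\geq 2$. \textbf{(B)} If $m$ is odd and $e\neq 0$, then there is again an invertible symmetric matrix with diagonal $e$: after a permutation congruence one may assume $e_m\neq 0$, and one forms the direct sum $M'\oplus [e_m]$ where $M'$ is an invertible symmetric matrix of even size $m-1$ with diagonal $(e_1,\dots,e_{m-1})$, furnished by (A); its determinant is $e_m\det(M')\neq 0$.

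Finally I would combine these. Applying (A) with $m=n$ to $e=\Delta(A_0)$ for a base point $A_0\in\calS$ shows that $n$ cannot be even, for otherwise $\Diag(e)+\Mata_n(\K)\subseteq\calS$ would contain an invertible matrix, contradicting singularity; hence $n$ is odd. Applying (B) to each $M\in\calS$ then forces $\Delta(M)=0$, so that $\calS\subseteq\Mata_n(\K)$; in particular $A_0\in\Mata_n(\K)$, whence every $N\in\Mata_n(\K)$ satisfies $N-A_0\in\Mata_n(\K)\subseteq S$ and so $N=A_0+(N-A_0)\in\calS$. This yields $\calS=\Mata_n(\K)$ with $n$ odd, as required. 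I expect the main obstacle to be the two sub-lemmas, and specifically making them work over an arbitrary field --- including $\F_2$, where one cannot invoke the principle that a nonzero polynomial does not vanish identically; the injectivity of the Frobenius endomorphism in the $2\times2$ computation is exactly what rescues the argument in that small-field regime.
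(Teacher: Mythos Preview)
Your proof is correct and follows essentially the same route as the paper's: both assume $(\Mata_n(\K))_H\subseteq S_H$ for all $H$, deduce $\Mata_n(\K)\subseteq S$ via the generators $E_{i,j}+E_{j,i}$, and then build an explicit invertible symmetric matrix with a prescribed diagonal to rule out nonzero diagonals in $\calS$. The only cosmetic difference is that you isolate the construction as two reusable sub-lemmas (A) and (B) using general $2\times 2$ blocks $\begin{bmatrix}\alpha & c\\ c & \beta\end{bmatrix}$, whereas the paper argues directly with a single permutation of the diagonal and the specific blocks $\begin{bmatrix}0 & 1\\ 1 & 0\end{bmatrix}$ on the zero positions.
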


To better grasp the meaning of the conclusion, let us consider the special case of the hyperplane $H=\K^{n-1} \times \{0\}$.
Then, the matrices of $S_H$ may be written as
$$M=\begin{bmatrix}
[0]_{(n-1) \times (n-1)} & C(M) \\
C(M)^T & a(M)
\end{bmatrix} \quad \text{with $C(M)\in \K^{n-1}$ and $a(M) \in \K$.}$$
For $S_H$ not to include $\bigl(\Mata_n(\K)\bigr)_H$, it is necessary and sufficient
that either $a(S_H) \neq \{0\}$ and $\dim S_H \leq n-1$, or $a(S_H)=\{0\}$ and $\dim S_H \leq n-2$.

\begin{proof}
Let us assume that $S_H$ includes $(\Mata_n(\K))_H$ for every linear hyperplane $H$ of $\K^n$.
In particular, for every $i \in \lcro 1,n\rcro$, this holds for the hyperplane defined in the standard basis by the equation $x_i=0$.
It follows that $S$ includes $\Mata_n(\K)$. Assume that $\calS$ contains a non-zero diagonal matrix $D$.
With no loss of generality, we can assume that
$D=\Diag(0,\dots,0,a_{p+1},\dots,a_n)$ for some $p \in \lcro 1,n-1\rcro$, where $a_{p+1},\dots,a_n$ are non-zero scalars.
For all $i \in \lcro 1,\lfloor \frac{p+1}{2}\rfloor\rcro$, we know that $S$ contains the alternating matrix
$A_i=E_{2i-1,2i}-E_{2i,2i-1}$.
One then checks that the matrix $D+\underset{i=1}{\overset{\lfloor \frac{p+1}{2}\rfloor}{\sum}} A_i$
is non-singular, contradicting our assumptions on $\calS$. Hence, $\calS$ contains no non-zero diagonal matrix.

Now, given a matrix $M \in \calS$, the alternating matrix $A \in \Mata_n(\K)$ with the same off-diagonal elements as $M$
belongs to $S$, and hence the diagonal matrix $D:=M-A$ belongs to $\calS$. It follows that $M=A \in \Mata_n(\K)$.
Since $A \in S$ we deduce that $0 \in \calS$ and hence $\calS=S$: then, it follows from the first part of the proof that
$\Mata_n(\K) \subset \calS$, and from the second one that $\calS \subset \Mata_n(\K)$.
Hence, $\calS=\Mata_n(\K)$.
If $n$ is even then $\Mata_n(\K)$ contains the non-singular matrix
$\begin{bmatrix}
[0]_{s \times s} & I_s \\
-I_s & [0]_{s \times s}
\end{bmatrix}$ where $s:=\frac{n}{2}\cdot$ We conclude that $n$ is odd.
\end{proof}

\subsection{Range-compatible maps on full spaces of symmetric or alternating matrices}\label{RCsection}

We recall the following notion from \cite{dSPRC1}.

\begin{Def}
Let $U$ and $V$ be vector spaces, and $\calS$ be a linear subspace of $\calL(U,V)$, the space of all linear mappings
from $U$ to $V$. A map $F : \calS \rightarrow V$ is called \textbf{range-compatible} when
$$\forall s \in \calS, \; F(s) \in \im(s).$$
It is called \textbf{local} when $F : s \mapsto s(x)$ for some vector $x \in U$.
\end{Def}

Of course, we can interpret any linear subspace of $\Mat_p(\K)$ as a subspace of $\calL(\K^p,\K^p)$
by using the canonical basis, and hence we have a notion of range-compatibility for maps from such a subspace to $\K^p$.

Range-compatible additive maps on large spaces of rectangular matrices have been extensively studied in the recent \cite{dSPRC1,
dSPRC2,dSPRCsym}. In this work, we shall need a precise understanding of the range-compatible maps on
the special spaces $\Mats_n(\K)$ and $\Mata_n(\K)$.
Let us recall the known results:

\begin{theo}[Theorem 1.7 of \cite{dSPRC1}]\label{symmetricRC}
Let $p$ be a non-negative integer.
\begin{enumerate}[(a)]
\item If $\# \K>2$ then every range-compatible linear map on $\Mats_p(\K)$ is local.
\item If $\# \K=2$ then every range-compatible linear map on $\Mats_p(\K)$ is local or equals the sum of a local map
with $M \mapsto \Delta(M)$.
\end{enumerate}
\end{theo}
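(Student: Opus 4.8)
The plan is to prove this directly and elementarily, by evaluating $F$ on the standard basis $(E_{i,i})_{1 \le i \le p} \cup (E_{i,j}+E_{j,i})_{1 \le i<j \le p}$ of $\Mats_p(\K)$ and exploiting range-compatibility against carefully chosen rank $1$ and rank $2$ matrices. For a vector $x \in \K^p$, write $F_x$ for the local map $M \mapsto Mx$. Since $E_{i,i}=e_i e_i^T$ has range $\K e_i$, range-compatibility provides scalars $c_1,\dots,c_p$ with $F(E_{i,i})=c_i\,e_i$; set $x:=\sum_{i=1}^p c_i\,e_i$. Similarly, for $i \neq j$ the range of $E_{i,j}+E_{j,i}$ is $\Vect(e_i,e_j)$, so $F(E_{i,j}+E_{j,i})=\alpha_{i,j}\,e_i+\beta_{i,j}\,e_j$ for some scalars $\alpha_{i,j},\beta_{i,j}$. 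As $F$ is then determined on a basis, the whole task reduces to pinning down these scalars.

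I would first treat the case $\# \K>2$. For nonzero $\lambda,\mu$, the matrix $(\lambda e_i+\mu e_j)(\lambda e_i+\mu e_j)^T=\lambda^2 E_{i,i}+\lambda\mu(E_{i,j}+E_{j,i})+\mu^2 E_{j,j}$ has rank $1$ and range $\K(\lambda e_i+\mu e_j)$. Applying $F$ and demanding collinearity of the image with $\lambda e_i+\mu e_j$ gives, after simplifying by $\lambda\mu$, the identity $\lambda(c_i-\beta_{i,j})+\mu(\alpha_{i,j}-c_j)=0$ valid for all $\lambda,\mu \in \K\setminus\{0\}$. Taking $(\lambda,\mu)=(1,1)$ and then $(\lambda,\mu)=(t,1)$ for some $t \notin\{0,1\}$ --- which exists exactly because $\# \K>2$ --- forces $\beta_{i,j}=c_i$ and $\alpha_{i,j}=c_j$. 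Since $E_{i,i}x=c_i e_i$ and $(E_{i,j}+E_{j,i})x=c_j e_i+c_i e_j$, this shows $F$ agrees with $F_x$ on the whole basis, hence $F=F_x$ is local.

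The main obstacle is the case $\# \K=2$, where only $(\lambda,\mu)=(1,1)$ is available, so the rank $1$ matrix $(e_i+e_j)(e_i+e_j)^T$ yields merely $\alpha_{i,j}+\beta_{i,j}=c_i+c_j$. Writing $\alpha_{i,j}=c_j+\epsilon_{i,j}$ and $\beta_{i,j}=c_i+\epsilon_{i,j}$ with $\epsilon_{i,j}=\epsilon_{j,i}\in\{0,1\}$ records the one remaining binary degree of freedom per pair. To collapse it I would use the three-index rank $1$ matrix $(e_i+e_j+e_k)(e_i+e_j+e_k)^T$, whose range is $\K(e_i+e_j+e_k)$: expanding $F$ on it and equating the coefficients of $e_i,e_j,e_k$ (as forced by collinearity) yields $\epsilon_{i,j}=\epsilon_{i,k}=\epsilon_{j,k}$. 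As any two index pairs are linked through a chain of such triples when $p \ge 3$ (and there is a single pair when $p=2$), all the $\epsilon_{i,j}$ equal a common value $\epsilon\in\{0,1\}$.

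It remains to interpret the two outcomes. If $\epsilon=0$, the reasoning of the large-field case applies and $F=F_x$ is local. If $\epsilon=1$, then $F-F_x$ kills every $E_{i,i}$ and sends each $E_{i,j}+E_{j,i}$ to $e_i+e_j$; a direct check identifies $F-F_x$ with $M \mapsto \Delta(M)+M\mathbf{1}$, where $\mathbf{1}:=\sum_i e_i$, so that $F=F_{x+\mathbf{1}}+\Delta$ is the sum of a local map with $M \mapsto \Delta(M)$. The degenerate cases $p \le 1$, where no off-diagonal basis vector occurs, are immediate. I expect the two genuinely delicate points to be the three-index computation that forces the $\epsilon_{i,j}$ to be constant, and the final recognition of the residual map as $\Delta$ up to a local summand; everything else is dictated by collinearity with a rank $1$ range.
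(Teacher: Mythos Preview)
Your argument is correct. The paper itself does not prove this theorem: it merely quotes it as Theorem~1.7 of \cite{dSPRC1} and uses it as a black box in Sections~\ref{section7} and~\ref{section8}. So there is no in-paper proof to compare against; you have supplied a self-contained elementary proof where the paper relies on an external reference.

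A couple of minor remarks on the write-up. First, the collinearity condition in the $\#\K>2$ step should perhaps be phrased a little more carefully: the image $F\bigl((\lambda e_i+\mu e_j)(\lambda e_i+\mu e_j)^T\bigr)$ lies in $\K(\lambda e_i+\mu e_j)$, so its $e_i$- and $e_j$-coefficients satisfy $\mu\cdot(\lambda^2 c_i+\lambda\mu\,\alpha_{i,j})=\lambda\cdot(\lambda\mu\,\beta_{i,j}+\mu^2 c_j)$, which after dividing by $\lambda\mu$ gives exactly your identity. Second, in the $\#\K=2$ endgame you could shorten slightly: once $F-F_x$ kills every $E_{i,i}$ and maps $E_{i,j}+E_{j,i}$ to $e_i+e_j$, it is immediate that $(F-F_x)(M)$ depends only on the off-diagonal entries of $M$, and a direct computation on a general $M=(m_{i,j})\in\Mats_p(\K)$ gives $(F-F_x)(M)=\sum_{i<j} m_{i,j}(e_i+e_j)$, whose $k$-th entry is $\sum_{j\neq k} m_{k,j}=(M\mathbf{1})_k+m_{k,k}=(M\mathbf{1}+\Delta(M))_k$. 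Your identification $F=F_{x+\mathbf{1}}+\Delta$ is therefore on the nose. None of this affects correctness.
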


Conversely, it can be checked that $M \mapsto \Delta(M)$ is range-compatible on $\Mats_p(\F_2)$
and that it is non-local if $p>1$.

\begin{theo}[Theorem 1.7 of \cite{dSPRCsym}]\label{alternatingRC}
Let $p$ be a non-negative integer. Then, every range-compatible linear map on $\Mata_p(\K)$ is local.
\end{theo}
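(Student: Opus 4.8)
The plan is to pin down $F$ on the standard basis of rank-$2$ alternating matrices and then invoke linearity. For $i \neq j$, set $A_{i,j} := E_{i,j} - E_{j,i}$; the family $(A_{i,j})_{i<j}$ is a basis of $\Mata_p(\K)$, and a direct column computation gives $\im(A_{i,j}) = \Vect(e_i,e_j)$, where $(e_1,\dots,e_p)$ denotes the standard basis of $\K^p$. Range-compatibility therefore yields scalars $\alpha_{i,j},\beta_{i,j}$ with $F(A_{i,j}) = \alpha_{i,j}\,e_i + \beta_{i,j}\,e_j$ for every ordered pair $(i,j)$ with $i \neq j$. Since $A_{i,j}=-A_{j,i}$ and $F$ is linear, comparing coefficients of $e_i$ in $F(A_{i,j})=-F(A_{j,i})$ forces $\alpha_{i,j}=-\beta_{j,i}$.

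The key step is to test $F$ against the rank-$2$ matrices $A_{i,j}+A_{i,k}$ for pairwise distinct indices $i,j,k$ (which requires $p \geq 3$). A short column computation shows $\im(A_{i,j}+A_{i,k}) = \Vect(e_i,\,e_j+e_k)$, so every vector in this image has equal $e_j$- and $e_k$-coordinates. Applying this to $F(A_{i,j}+A_{i,k}) = F(A_{i,j}) + F(A_{i,k})$ gives $\beta_{i,j}=\beta_{i,k}$; hence $\beta_{i,j}$ depends only on $i$, say $\beta_{i,j}=x_i$. Feeding this into the antisymmetry relation yields $\alpha_{i,j}=-\beta_{j,i}=-x_j$, so that $F(A_{i,j}) = -x_j\,e_i + x_i\,e_j$ for all $i \neq j$.

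It then remains to recognize this as a local map. Setting $x:=(x_1,\dots,x_p)^T \in \K^p$, a one-line computation gives $A_{i,j}\,x = x_j\,e_i - x_i\,e_j$, whence $F(A_{i,j}) = A_{i,j}(-x)$ for every $i \neq j$. As $F$ and the local map $M \mapsto M(-x)$ are both linear and agree on the basis $(A_{i,j})_{i<j}$, they coincide on all of $\Mata_p(\K)$; thus $F$ is local.

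The remaining obstacle is purely bookkeeping in low dimension. For $p \leq 1$ the space $\Mata_p(\K)$ is trivial and $F=0$ is local; for $p=2$ the three-index test is unavailable, but there $A_{1,2}$ is invertible, so $M \mapsto Mx$ already attains every value in $\K^2$ and any linear $F$ is automatically local. The only genuinely delicate point in the general argument is keeping the antisymmetry indexing consistent across ordered pairs. Note that no characteristic or cardinality hypothesis on $\K$ is needed here, which reflects the absence of the $\Delta$-type correction appearing in the symmetric case of Theorem \ref{symmetricRC}.
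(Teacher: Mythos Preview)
Your proof is correct. The paper itself does not prove this theorem; it merely quotes it as Theorem 1.7 of \cite{dSPRCsym} and uses it as a black box, so there is no in-paper argument to compare against. Your approach---pinning $F$ down on the rank-$2$ basis $A_{i,j}=E_{i,j}-E_{j,i}$, then using the rank-$2$ combinations $A_{i,j}+A_{i,k}$ to force the $\beta_{i,j}$ to depend on $i$ alone---is a clean, fully self-contained elementary argument that works uniformly over all fields (including characteristic $2$, where the sign bookkeeping collapses harmlessly). The low-dimensional cases $p\le 2$ are handled correctly. This is a welcome addition, since the paper relies on the external reference here.
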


\subsection{On affine spaces of matrices with rank at most $1$}\label{rankatmost1section}

\begin{prop}\label{rankatmost1lemma}
Let $\calS$ be a $1$-dimensional affine subspace of $\Mats_n(\K)$ in which every matrix has rank at most $1$.
Then:
\begin{itemize}
\item Either $\calS$ is congruent to $\widetilde{\Mats_1(\K)}^{(n)}$;
\item Or $n \geq 2$, $\# \K=2$ and $\calS$ is congruent to $\widetilde{Z_2(\K)}^{(n)}$.
\end{itemize}
\end{prop}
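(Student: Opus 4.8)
The plan is to analyze a $1$-dimensional affine subspace $\calS$ of $\Mats_n(\K)$ of rank-$\leq 1$ matrices by writing $\calS = \{A + t N \mid t \in \K\}$ for some $A \in \calS$ and some nonzero $N \in S$ (the translation line). Both $A$ and $A+N$ have rank at most $1$, and more importantly every matrix on the line is singular in a very strong sense (rank $\leq 1$, which is far below the generic rank). The first step is to exploit the rank-$\leq 1$ condition on the whole line to pin down the structure. A symmetric rank-$\leq 1$ matrix over $\K$ has the form $\lambda X X^T$; I would first reduce, via a congruence, to a normalized position for $A$. If $A = 0$ then $\calS$ is the linear span of a single rank-$1$ symmetric matrix, which after congruence is $\K\, E_{1,1} = \widetilde{\Mats_1(\K)}^{(n)}$, landing in the first case. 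So the substantive case is $A \neq 0$, where after congruence I may take $A = E_{1,1}$ (a rank-$1$ symmetric matrix; note over any field a nonzero symmetric rank-$1$ matrix $\lambda XX^T$ is congruent to $E_{1,1}$ when $\lambda$ is a square or can be absorbed, but in general one gets $\lambda E_{1,1}$ — I would keep the scalar as $A = \alpha E_{1,1}$ with $\alpha \neq 0$ and handle it, or rescale $N$).

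Next I would extract strong constraints on $N$ from the requirement $\rk(A + tN) \leq 1$ for all $t$. Writing $A = \alpha E_{1,1}$ and $N = (n_{i,j})$ symmetric, the matrix $A + tN$ must have all $2 \times 2$ minors vanishing identically in $t$. The key step is to look at minors not involving row/column $1$: for indices $i,j \geq 2$, the $2 \times 2$ minor built from rows/columns $\{i,j\}$ of $A+tN$ equals $t^2(n_{i,i}n_{j,j} - n_{i,j}^2)$, and the principal $1\times 1$ situation forces the entire lower-right $(n-1)\times(n-1)$ block $N'$ of $N$ to itself have rank $\leq 1$ as a quadratic-in-$t$ constraint. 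Carefully, since $A$ only perturbs the $(1,1)$ entry, the submatrix $N_{2..n,2..n} = N'$ must satisfy $\rk(\alpha' + t N') \leq$ something; more directly, deleting row and column $1$ gives a submatrix of $A+tN$ equal to $tN'$, whose rank is $\leq 1$ for all $t \neq 0$, so $\rk N' \leq 1$. This is exactly the kind of corner/extraction reasoning already systematized in the earlier lemmas, and I expect $N'$ to be a symmetric rank-$\leq 1$ matrix, hence congruent (fixing the first coordinate) to a scalar multiple of $E_{2,2}$.

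The main obstacle — and the place where $\# \K = 2$ enters — is controlling the cross terms linking the first row/column of $N$ to the rest, and ruling out (or producing) the $\widetilde{Z_2(\K)}^{(n)}$ example. After normalizing $A = E_{1,1}$ and $N'$ to lie along $E_{2,2}$, the whole line is supported (up to congruence) in the top-left $2 \times 2$ block, reducing everything to the case $n = 2$. There the condition is that $\begin{bmatrix} \alpha + t n_{1,1} & t n_{1,2} \\ t n_{1,2} & t n_{2,2} \end{bmatrix}$ be singular for all $t$, i.e. $(\alpha + t n_{1,1}) t n_{2,2} - t^2 n_{1,2}^2 = 0$ identically. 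Expanding gives $\alpha n_{2,2}\, t + (n_{1,1} n_{2,2} - n_{1,2}^2)\, t^2 = 0$ for all $t$. The coefficient of $t$ forces $n_{2,2} = 0$ (since $\alpha \neq 0$), and then the coefficient of $t^2$ forces $n_{1,2}^2 = 0$, hence $n_{1,2} = 0$, so $N = n_{1,1} E_{1,1}$ and $\calS \subseteq \K E_{1,1}$, i.e. the first case — \emph{provided the vanishing of coefficients is legitimate}. The delicate point is that the polynomial identity argument ``coefficients vanish'' requires $\# \K > 2$; when $\# \K = 2$ only the two evaluations $t = 0, 1$ are available, so $n_{2,2} + (n_{1,1}n_{2,2} - n_{1,2}^2) = 0$ is the single constraint at $t=1$ (using $\alpha = 1$). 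This weaker constraint is exactly what admits the extra solution with $n_{2,2} = 1$, $n_{1,2} = 1$, $n_{1,1}$ free, and by tracking the two-element evaluations one recovers precisely the line $\bigl\{\begin{bmatrix} a & 1 \\ 1 & 1\end{bmatrix}\bigr\}$, whose congruence class is $\widetilde{Z_2(\K)}^{(n)}$. So the hard part is the careful bookkeeping over $\F_2$: showing that the only line not congruent to $\widetilde{\Mats_1(\K)}^{(n)}$ is the one congruent to $\widetilde{Z_2(\K)}^{(n)}$, and verifying that these two are genuinely non-congruent (e.g. the first contains the zero matrix and is linear, whereas the second is a non-linear affine line containing no rank-$0$ matrix). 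I would close by confirming $n \geq 2$ is forced in the exceptional case, since $\widetilde{Z_2(\K)}^{(n)}$ requires a $2 \times 2$ block.
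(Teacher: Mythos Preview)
Your strategy is sound but differs from the paper's, and your $\F_2$ endgame contains an error. The paper assumes $0\notin\calS$, picks two rank-$1$ matrices $A,B\in\calS$, and compares their images: if $\im A=\im B$ then $\calS$ sits inside a single line $\K\,vv^T$ and hence contains $0$ (contradiction); if $\im A\neq\im B$ then a congruence puts $A=\alpha E_{1,1}$ and $B=\beta E_{2,2}$ simultaneously, and the condition $\rk\bigl(\Diag(\alpha t,\,\beta(1-t))\bigr)\leq 1$ for all $t$ forces $\#\K=2$ in one line. Your minor-based reduction reaches the same $2\times 2$ problem but needs a step you glossed over: after normalizing $A=\alpha E_{1,1}$ and the lower block $N'$ to lie along $E_{2,2}$, the entries $n_{1,j}$ for $j\geq 3$ must still be killed before you can reduce to $n=2$; this follows from the principal $2\times 2$ minor of $A+tN$ on $\{1,j\}$, which equals $-t^2 n_{1,j}^2$.

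The real slip is in your $\F_2$ computation. The line $\bigl\{\begin{bmatrix} a & 1 \\ 1 & 1\end{bmatrix}: a\in\F_2\bigr\}$ that you name as the exceptional case contains $\begin{bmatrix} 0 & 1 \\ 1 & 1\end{bmatrix}$, whose determinant is $1$, so it has rank $2$ and cannot lie in $\calS$. With $\alpha=1$ the single $t=1$ constraint reads $n_{2,2}(1+n_{1,1})+n_{1,2}^2=0$, i.e.\ $n_{2,2}(1+n_{1,1})=n_{1,2}$ over $\F_2$; taking $n_{2,2}=1$ (the case $n_{2,2}=0$ forces $N\in\K E_{1,1}$ and hence $0\in\calS$), the pair $(n_{1,1},n_{1,2})$ is $(0,1)$ or $(1,0)$, not ``$n_{1,1}$ free.'' These give $\calS=\bigl\{E_{1,1},\,\begin{bmatrix}1&1\\1&1\end{bmatrix}\bigr\}$ or $\calS=\{E_{1,1},E_{2,2}\}$, which are congruent to each other and to $Z_2(\F_2)$ via $P=\begin{bmatrix}1&1\\0&1\end{bmatrix}$.
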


\begin{proof}
It is obvious that the first option holds true if $n=1$ or if $0 \in \calS$.
Assume now that $n \geq 2$ and that $0 \not\in \calS$.
We can choose two rank $1$ matrices $A$ and $B$ in $\calS$, so that $\calS$ is the line going through $A$ and $B$.
Note that every matrix of $\calS$ has its range included in $\im A+\im B$.

If $\im A=\im B$ then $\calS$ is congruent to a $1$-dimensional affine subspace of $\widetilde{\Mats_1(\K)}^{(n)}$,
and hence $0 \in \calS$, which contradicts our assumptions.

Assume now that $\im A \neq \im B$. Then, $\calS$ is congruent to a $1$-dimensional affine subspace of
$\widetilde{\Mats_2(\K)}^{(n)}$ and hence no generality is lost in assuming that $n=2$.
Then, as we can replace $A$ and $B$ with $PAP^T$ and $PBP^T$ for a well-chosen matrix $P \in \GL_2(\K)$,
we lose no generality in assuming that $A=\alpha\,E_{1,1}$ and $B=\beta\,E_{2,2}$ for some pair $(\alpha,\beta)\in (\K \setminus \{0\})^2$.
Then, for all $t \in \K$, the matrix $\begin{bmatrix}
\alpha\, t & 0 \\
0 & \beta\,(1-t)
\end{bmatrix}$ must have rank at most $1$, and hence $\# \K=2$ and $\calS=\{E_{1,1},E_{2,2}\}$.
Then, with $P:=\begin{bmatrix}
1 & 1 \\
0 & 1
\end{bmatrix}$, one checks that $P^T \calS P=Z_2(\K)$.
\end{proof}

\subsection{A corollary to Flanders's theorem}\label{Flanderscorsection}

\begin{cor}\label{Flanderscor}
Let $n$ and $p$ be non-negative integers such that $n \geq p \geq 2$. Let $\calV$
be an affine subspace of $\Mat_{n,p}(\K)$ with $\urk \calV<p$. \\
Assume that, for all $N \in \Mat_{n,p-1}(\K)$, the space $\calV$ contains a matrix of the form $\begin{bmatrix}
N & [?]_{n \times 1}
\end{bmatrix}$.
If $n>p$ or $p >2$ or $\# \K>2$ or $\calV$ contains the zero matrix, then
there exists a vector $Y \in \K^{p-1}$ such that
$$\calV=\Bigl\{\begin{bmatrix}
N & NY
\end{bmatrix} \mid N \in \Mat_{n,p-1}(\K)\Bigr\}.$$
\end{cor}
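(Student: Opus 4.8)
The plan is to recognize this statement as exactly the equality case of the affine Flanders theorem (Theorem \ref{Flandersaffine}), and then to discard the two undesirable structural branches of that case by means of the surjectivity hypothesis and the standing disjunctive hypothesis.

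First I would pin down the dimension of $\calV$. Let $\pi : \Mat_{n,p}(\K) \rightarrow \Mat_{n,p-1}(\K)$ be the linear projection onto the first $p-1$ columns. The hypothesis says precisely that $\pi(\calV)=\Mat_{n,p-1}(\K)$; since $\pi$ is linear, this forces $\pi$ to carry the translation vector space of $\calV$ onto $\Mat_{n,p-1}(\K)$, whence $\dim \calV \geq n(p-1)$. On the other hand, $\urk \calV \leq p-1$ and $p-1 \leq \min(n,p)$ (because $n \geq p$), so Theorem \ref{Flandersaffine} applied with $r:=p-1$ gives $\dim \calV \leq n(p-1)$. Therefore $\dim \calV = n(p-1)=nr$, and we sit exactly in the equality case of Flanders's theorem.

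Next I would eliminate the two exotic branches of that equality case. The branch with $\# \K=2$, $n=p=2$ and $0 \notin \calV$ is incompatible with the standing hypothesis ($n>p$, or $p>2$, or $\# \K>2$, or $\calV$ contains the zero matrix), so it cannot occur. The branch in which $n=p$ and some $(p-1)$-dimensional subspace $W \subsetneq \K^n$ contains the range of every matrix of $\calV$ is ruled out by surjectivity: choosing a vector $w_0 \in \K^n \setminus W$ and any $N \in \Mat_{n,p-1}(\K)$ with first column $w_0$ (possible since $p-1 \geq 1$), then completing $N$ to a matrix $\begin{bmatrix} N & c\end{bmatrix}\in\calV$, would force $w_0 \in W$, a contradiction. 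Hence only the first branch survives: there is a nonzero vector $v \in \K^p$ with $Mv=0$ for every $M \in \calV$.

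Finally I would extract the vector $Y$. Writing $v=\begin{bmatrix} Y' \\ \lambda \end{bmatrix}$ with $Y' \in \K^{p-1}$ and $\lambda \in \K$, the relation $Mv=0$ reads $NY'+\lambda c=0$ whenever $\begin{bmatrix} N & c\end{bmatrix} \in \calV$. Were $\lambda=0$, we would have $Y' \neq 0$ together with $NY'=0$ for all $N \in \Mat_{n,p-1}(\K)$ by surjectivity, which is absurd; hence $\lambda \neq 0$. Setting $Y:=-\lambda^{-1}Y'$ then yields $c=NY$ for every $\begin{bmatrix} N & c\end{bmatrix} \in \calV$, so $\calV \subseteq \{\begin{bmatrix} N & NY\end{bmatrix} \mid N \in \Mat_{n,p-1}(\K)\}$, and the reverse inclusion is immediate from surjectivity. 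The hard part is really the case analysis in the Flanders equality step: the content lies in observing that the ``range in a fixed hyperplane'' branch is exactly what the surjectivity assumption forbids, while the two-element-field exception is precisely what the disjunctive hypothesis was designed to exclude; once these are gone, producing $Y$ is routine bookkeeping.
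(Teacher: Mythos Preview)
Your proof is correct and follows essentially the same approach as the paper: apply Flanders's theorem to get $\dim\calV=n(p-1)$, then use the equality case together with the surjectivity hypothesis and the disjunctive hypothesis to rule out the two undesired branches, leaving a common kernel vector from which $Y$ is extracted. The only cosmetic difference is that the paper eliminates the ``range in a fixed $(p-1)$-plane'' branch via its dual formulation (a nonzero row vector $Y^T$ with $Y^T M=0$, hence $Y^T N=0$ for all $N$), whereas you argue directly on columns; the content is identical.
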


\begin{proof}
By Flanders's theorem, we know that $\dim \calV \leq n(p-1)$.
As $\calV$ contains a matrix of the form $\begin{bmatrix}
N & [?]_{n \times 1}
\end{bmatrix}$ for all $N \in \Mat_{n,p-1}(\K)$, it follows that $\dim \calV=n(p-1)$ and that there is an affine map
$C : \Mat_{n,p-1}(\K) \rightarrow \K^n$ such that
$$\calV=\biggl\{\begin{bmatrix}
N & C(N)
\end{bmatrix} \mid N \in \Mat_{n,p-1}(\K) \biggr\}.$$
If $n>p$ or $p>2$ or $\# \K>2$ or $\calV$ contains the zero matrix, then the second statement in Flanders's theorem shows that:
\begin{itemize}
\item Either there exists a non-zero vector $X \in \K^p$ such that $MX=0$ for all $M \in \calV$;
\item Or $n=p$ and there exists a non-zero vector $Y \in \K^n$ such that $Y^T M=0$ for all $M \in \calV$.
\end{itemize}
The second case is actually impossible because it would yield a non-zero vector $Y \in \K^n$ such that $Y^T N=0$ for all $N \in \Mat_{n,p-1}(\K)$.
Thus, we have a non-zero vector $X \in \K^p$ such that $MX=0$ for all $M \in \calV$.
The vector $X$ cannot belong to $\K^{p-1} \times \{0\}$ for this would yield a non-zero vector $X' \in \K^{p-1}$
such that $NX'=0$ for all $N \in \Mat_{n,p-1}(\K)$. Thus, replacing $X$ with a non-zero collinear vector if necessary, we can
assume that $X=\begin{bmatrix}
Y \\
-1
\end{bmatrix}$ for some $Y \in \K^{p-1}$, and it follows that $C(N)=NY$ for all $N \in \Mat_{n,p-1}(\K)$, which yields the claimed statement.
\end{proof}

\section{The maximal dimension for spaces of alternating matrices}\label{section3}

In this section, we prove the first statement of Theorem \ref{maintheoalternate}.
To do so, we perform an induction on $n$ and $r$. The case when $n \leq 2$ is obvious.
Assume now that $n \geq 3$. Let $s$ be a non-negative integer such that $2s<n$. Set $r:=2s$.
Let $\calS$ be an affine subspace of $\Mata_n(\K)$ whose translation vector space will be denoted by $S$.
Assume that $\urk \calS \leq 2s$.

If $n=2s+1$, then we simply write
$$\dim \calS \leq \dim \Mata_n(\K)=\dbinom{n}{2}=a^{(1)}_{n,2s.}$$
If $s=0$, it is obvious that $\dim \calS=0=a^{(1)}_{n,2s}$.

In the rest of the proof, we assume that $0<2s \leq n-2$. Then,
by Lemma \ref{keylemmaalternating}, we can find a linear hyperplane $H$ of $\K^n$ such that $\dim S_H \leq s$.
Replacing $\calS$ with a congruent space if necessary, we can assume that $H=\K^{n-1} \times \{0\}$.
From there, we split the discussion into two cases.

\subsection{Case 1: $S_H=\{0\}$.}\label{case1dimensionalternating}

In other words, $S$ contains no non-zero matrix of the form $\begin{bmatrix}
[0]_{(n-1) \times (n-1)} & [?]_{(n-1) \times 1} \\
[?]_{1 \times (n-1)} & 0
\end{bmatrix}$.

Then, we split every matrix $M$ of $\calS$ up as
$$M=\begin{bmatrix}
P(M) & [?]_{(n-1) \times 1} \\
[?]_{1 \times (n-1)} & 0
\end{bmatrix} \quad \text{with $P(M) \in \Mata_{n-1}(\K)$.}$$
Note that $P(\calS)$ is an affine subspace of $\Mata_{n-1}(\K)$ such that
$\urk P(\calS) \leq \urk \calS \leq 2s$.
Note that $2s<n-1$. Hence, by induction
$$\dim P(\calS) \leq \dbinom{2s+1}{2} \quad \text{or} \quad \dim P(\calS) \leq \dbinom{s}{2}+s(n-1-s).$$
On the other hand, as $S_H=\{0\}$ we find
$$\dim \calS=\dim P(\calS).$$
As $\dbinom{s}{2}+s(n-1-s) \leq \dbinom{s}{2}+s(n-s)$, we conclude that
$$\dim \calS \leq \dbinom{2s+1}{2} \quad \text{or} \quad \dim \calS \leq \dbinom{s}{2}+s(n-s),$$
that is $\dim \calS \leq \max\bigl(a_{n,r}^{(1)},a_{n,r}^{(2)}\bigr)$.

\subsection{Case 2: $S_H \neq \{0\}$.}\label{case2dimensionalternating}

Replacing $\calS$ with $(P \oplus I_1) \calS (P \oplus I_1)^T$ for a well-chosen
invertible matrix $P \in \GL_{n-1}(\K)$, we see that no generality is lost in assuming that
$S$ contains the matrix $N=E_{1,n}-E_{n,1}$. Let us write any matrix $M \in \calS$ as
$$M=\begin{bmatrix}
0 & [?]_{1 \times (n-2)} & ? \\
[?]_{(n-2) \times 1} & K(M) & [?]_{(n-2) \times 1} \\
? & [?]_{1 \times (n-2)} & 0
\end{bmatrix} \quad \text{with $K(M) \in \Mata_{n-2}(\K)$.}$$
Note that $K(\calS)$ is an affine subspace of $\Mata_{n-2}(\K)$ and that the rank theorem yields
$$\dim \calS \leq \dim K(\calS)+(n-2)+\dim S_H \leq \dim K(\calS)+(n-2)+s.$$
On the other hand, for all $M \in \calS$ and all $t \in \K$, the matrix
$M+t N$ belongs to $\calS$, and hence Lemma \ref{alternatingcornerlemma} yields that
$$\urk K(\calS) \leq 2s-2.$$
By induction, we deduce that
$$\dim K(\calS) \leq \max\bigl(a^{(1)}_{n-2,r-2},a^{(2)}_{n-2,r-2}\bigr).$$
One checks that
$$a^{(2)}_{n-2,r-2}+(n-2)+s=a^{(2)}_{n,r.}$$
Hence, if $a^{(2)}_{n-2,r-2} \geq a^{(1)}_{n-2,r-2}$ then
$$\dim \calS \leq a^{(2)}_{n,r.}$$
Assume now that $a^{(1)}_{n-2,r-2}>a^{(2)}_{n-2,r-2}$ which, by Remark \ref{dimensionremarkalternating},
shows that $5(s-1)\geq 2(n-2)-3$, that is $5s\geq 2n-2$.
Then,
$$\dim \calS \leq \dbinom{2s-1}{2}+(n-2)+s,$$
whereas
\begin{align*}
\dbinom{2s+1}{2}-\Biggl( \dbinom{2s-1}{2}+(n-2)+s\Biggr)& =2s+(2s-1)-s-n+2 \\
& =3s-n+1 \\
& \geq \frac{n-1}{5}>0.
\end{align*}
Hence,
$$\dim \calS < \dbinom{2s+1}{2}=a^{(1)}_{n,r} \leq \max\bigl(a^{(1)}_{n,r},a^{(2)}_{n,r}\bigr).$$
Therefore, the desired conclusion holds in either case, which finishes our proof.

\subsection{Some corollaries}\label{corollalternatingsection}

The following result is an obvious consequence of the inequality statement in Theorem \ref{maintheoalternate}:

\begin{cor}\label{generationcor1alternating}
Let $n$ and $r$ be non-negative integers. Assume that $r$ is even and that $r<n$.
Let $\calS$ be an affine subspace of $\Mata_n(\K)$ such that
$$\dim \calS \geq 2+\max\bigl(a^{(1)}_{n,r},a^{(2)}_{n,r}\bigr).$$
Then, the affine space $\calS$ is generated by its matrices with rank greater than $r$.
\end{cor}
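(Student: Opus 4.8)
The plan is to argue by contraposition on the dimension of the subspace generated by the high-rank matrices. Suppose, for the sake of contradiction, that $\calS$ is \emph{not} generated by its matrices of rank greater than $r$. Let $\calW$ denote the affine subspace generated by $\{M \in \calS \mid \rk M > r\}$ (if this set is empty we handle it separately, but in that case $\urk \calS \leq r$ and the inequality statement of Theorem \ref{maintheoalternate} already gives $\dim \calS \leq \max(a^{(1)}_{n,r},a^{(2)}_{n,r})$, contradicting the hypothesis). Since $\calW \subsetneq \calS$, the affine subspace $\calW$ is contained in a proper affine subspace of $\calS$, and in particular $\calS$ admits an affine hyperplane $\calH$ with $\calW \subseteq \calH \subsetneq \calS$, so that $\dim \calH = \dim \calS - 1$.

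Next I would exploit the defining property of $\calH$: every matrix of $\calS$ with rank greater than $r$ lies in $\calH$. Equivalently, every matrix of $\calS \setminus \calH$ has rank at most $r$. The key observation is that $\calS \setminus \calH$ is a union of affine hyperplanes parallel to $\calH$ (namely all the translates of $\calH$ inside $\calS$ other than $\calH$ itself). I would pick one such parallel affine hyperplane $\calH'$; it is an affine subspace of $\Mata_n(\K)$ with $\dim \calH' = \dim \calS - 1$ and, by construction, $\urk \calH' \leq r$. This is the central idea: moving off the exceptional hyperplane $\calH$ kills all the high-rank matrices, leaving an affine subspace of codimension one whose upper-rank is bounded by $r$.

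Applying the inequality statement of Theorem \ref{maintheoalternate} to $\calH'$ then gives
$$\dim \calH' \leq \max\bigl(a^{(1)}_{n,r},a^{(2)}_{n,r}\bigr),$$
whence
$$\dim \calS = \dim \calH' + 1 \leq 1 + \max\bigl(a^{(1)}_{n,r},a^{(2)}_{n,r}\bigr).$$
This directly contradicts the hypothesis $\dim \calS \geq 2 + \max\bigl(a^{(1)}_{n,r},a^{(2)}_{n,r}\bigr)$, completing the argument.

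The only real subtlety, and the step I expect to require the most care, is guaranteeing the existence of a parallel affine hyperplane $\calH'$ inside $\calS$ that is genuinely disjoint from $\calH$; over a field with only two elements this is delicate, since an affine space of dimension $d$ over $\F_2$ is partitioned by an affine hyperplane $\calH$ into exactly two parallel classes, so there is precisely one parallel translate $\calH'$ available — but that is enough. For larger fields there are many such translates and any one works. In every case the partition of $\calS$ into affine hyperplanes parallel to $\calH$ furnishes at least one translate $\calH' \neq \calH$, and since all high-rank matrices were collected into $\calH$, this $\calH'$ satisfies $\urk \calH' \leq r$, which is exactly what the induction-free appeal to Theorem \ref{maintheoalternate} needs.
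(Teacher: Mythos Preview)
Your proof is correct and follows essentially the same route as the paper: assume the high-rank matrices fail to generate $\calS$, confine them to an affine hyperplane $\calH$, pick a parallel translate $\calH'$ on which the upper-rank is at most $r$, and contradict the dimensional hypothesis via the inequality statement of Theorem \ref{maintheoalternate}. Your write-up is in fact more careful than the paper's, handling explicitly the empty case and the existence of a parallel translate over $\F_2$.
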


\begin{proof}
If the contrary held true then some hyperplane $\calT$ of $\calS$
would contain all the matrices of $\calS$ with rank greater than $r$; choosing a different parallel hyperplane
$\calT'$, we would see that $\urk \calT' \leq r$, and by the inequality statement in Theorem \ref{maintheoalternate} this would lead to
$$\dim \calT' \leq \max\bigl(a^{(1)}_{n,r},a^{(2)}_{n,r}\bigr),$$
contradicting the fact that $\dim \calT' = \dim \calS -1$.
\end{proof}

In particular, we obtain the following known result as a corollary.

\begin{cor}\label{generationcor2alternating}
Let $r$ be a positive even integer.
Then:
\begin{enumerate}[(a)]
\item The affine space $\Mata_{r+1}(\K)$ is generated by its rank $r$ matrices.
\item The affine space $\Mata_r(\K)$ is generated by its rank $r$ matrices unless $r=2$ and $\# \K=2$.
\end{enumerate}
\end{cor}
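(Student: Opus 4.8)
The plan is to derive both statements from Corollary \ref{generationcor1alternating}, applied not with the rank bound $r$ itself but with the smaller even bound $r-2$. The key observation is that every alternating matrix has even rank, and that the maximal rank occurring in $\Mata_m(\K)$ is $2\lfloor m/2\rfloor$; hence every matrix of $\Mata_{r+1}(\K)$ and of $\Mata_r(\K)$ has rank at most $r$. Consequently, in either of these spaces a matrix has rank strictly greater than $r-2$ if and only if its rank equals exactly $r$. Thus, once we verify that the space at hand has dimension at least $2+\max\bigl(a^{(1)}_{n,r-2},a^{(2)}_{n,r-2}\bigr)$, Corollary \ref{generationcor1alternating} (with its internal rank bound equal to $r-2$) immediately yields that the space is generated by its rank $r$ matrices.

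For statement (a), write $r=2s$ with $s\geq 1$, take $n=r+1$, and apply the corollary with rank bound $r-2=2(s-1)$. Here $\dim\Mata_{r+1}(\K)=\binom{r+1}{2}=2s^2+s$, while a routine computation gives $a^{(1)}_{r+1,r-2}=\binom{2s-1}{2}=2s^2-3s+1$ and $a^{(2)}_{r+1,r-2}=\binom{s-1}{2}+(s-1)(s+2)=\tfrac{3s^2-s-2}{2}$. One then checks that $2s^2+s\geq 2+\max\bigl(2s^2-3s+1,\ \tfrac{3s^2-s-2}{2}\bigr)$ holds for every $s\geq 1$: the first comparison reduces to $4s\geq 3$, and the second to $s^2+3s-2\geq 0$. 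Hence the dimension hypothesis of the corollary is met and (a) follows.

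For statement (b), keep $r=2s$ but take $n=r$, again with rank bound $r-2$. Now $\dim\Mata_r(\K)=\binom{r}{2}=2s^2-s$, whereas $a^{(1)}_{r,r-2}=\binom{2s-1}{2}=2s^2-3s+1$ and $a^{(2)}_{r,r-2}=\binom{s-1}{2}+(s-1)(s+1)=\tfrac{3s(s-1)}{2}$. The inequality $2s^2-s\geq 2+\max(\cdots)$ now reduces to $2s\geq 3$ together with $s^2+s-4\geq 0$, both of which hold precisely when $s\geq 2$. So for $r\geq 4$ the corollary applies over every field and gives the conclusion.

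The only genuinely delicate point is the remaining case $r=2$ (that is $s=1$), where the dimension hypothesis of Corollary \ref{generationcor1alternating} fails and a direct argument is needed. Here $\Mata_2(\K)=\K\,(E_{1,2}-E_{2,1})$ is one-dimensional, and its rank $2$ matrices are exactly the nonzero scalar multiples of $E_{1,2}-E_{2,1}$. If $\#\K>2$, there are at least two distinct such multiples, whose affine span is the whole line $\Mata_2(\K)$, so (b) holds. If $\#\K=2$, there is a single rank $2$ matrix, whose affine span is a single point, strictly smaller than $\Mata_2(\K)$; this is exactly the exception recorded in the statement. I expect this low-dimensional case, and specifically confirming that the failure occurs over $\F_2$ and nowhere else, to require the only real care, the rest being the routine dimension bookkeeping carried out above.
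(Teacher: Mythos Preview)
Your proof is correct and follows essentially the same approach as the paper: apply Corollary~\ref{generationcor1alternating} with the rank bound $r-2$ whenever the dimension inequality permits, and handle the residual case $r=2$ in part~(b) by the direct observation on the line $\Mata_2(\K)$. Your treatment is in fact more explicit than the paper's, which simply asserts the corollary applies and only singles out the $r=2$ case of $\Mata_2(\K)$ for direct treatment; you also verify that part~(a) with $r=2$ still falls under the corollary, which the paper leaves implicit.
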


When $r=2$, this result is not derived from Corollary \ref{generationcor1alternating}.
Rather, we simply remark that there are several rank $2$ matrices in the line $\Mata_2(\K)$ if $\# \K>2$.

\section{The maximal dimension for spaces of symmetric matrices: the characteristic not $2$ case}\label{section4}

In this section, we assume that the characteristic of $\K$ differs from $2$,
and we prove the first statement of Theorem \ref{maintheosymmetric} in that case.
We perform an induction on $n$ and $r$.
Let $n$ and $r$ be non-negative integers such that $r<n$.
Let $\calS$ be an affine subspace of $\Mats_n(\K)$ whose translation vector space we denote by $S$.
Assume that $\urk \calS \leq r$. If $r=0$ then $\calS=\{0\}$ and hence $\dim \calS \leq s_{n,r}^{(1)}$.

In the rest of the proof, we assume that $r>0$.
By Lemma \ref{keylemmasymmetriccarnot2}, there exists a linear hyperplane $H$ of $\K^n$ such that
$$\dim S_H \leq \left\lfloor \frac{r}{2}\right\rfloor.$$
Without loss of generality, we can assume that $H=\K^{n-1} \times \{0\}$.
From there, we split the discussion into several subcases.

\subsection{Case 1: $S_H=\{0\}$.}\label{section4.1}

Then, by working as in Section \ref{case1dimensionalternating} we find by induction that
$$\dim \calS \leq \max\bigl(s^{(1)}_{n-1,r},s^{(2)}_{n-1,r}\bigr)
\leq \max\bigl(s^{(1)}_{n,r},s^{(2)}_{n,r}\bigr).$$

\subsection{Case 2: $S_H$ contains a rank $2$ matrix.}\label{section4.2}

As in Section \ref{case2dimensionalternating}, we can use a well-chosen congruence transformation to reduce the situation to the one where
$S$ contains
$$N=\begin{bmatrix}
0 & [0]_{1 \times (n-2)} & 1 \\
[0]_{(n-2) \times 1} & [0]_{(n-2) \times (n-2)} & [0]_{(n-2) \times 1} \\
1 & [0]_{1 \times (n-2)} & a
\end{bmatrix} \quad \text{for some $a \in \K$.}$$
Then, we split every matrix $M$ of $\calS$ up as
$$M=\begin{bmatrix}
? & [?]_{1 \times (n-2)} & ? \\
[?]_{(n-2) \times 1} & K(M) & [?]_{(n-2) \times 1} \\
? & [?]_{1 \times (n-2)} & ?
\end{bmatrix} \quad \text{with $K(M) \in \Mats_{n-2}(\K)$.}$$
Note that $K(\calS)$ is an affine subspace of $\Mats_{n-2}(\K)$ and that the rank theorem yields
$$\dim \calS \leq \dim K(\calS)+(n-1)+\dim S_H \leq \dim K(\calS)+(n-1)+\left\lfloor \frac{r}{2}\right\rfloor.$$
On the other hand, for all $M \in \calS$ and all $t \in \K$, the matrix
$M+t N$ belongs to $\calS$, and hence Lemma \ref{symmetriccornerlemma1} yields that
$$\urk K(\calS) \leq r-2.$$
In particular, $r \geq 2$. By induction, we deduce that
$$\dim K(\calS) \leq s^{(1)}_{n-2,r-2} \quad \text{or} \quad \dim K(\calS) \leq s^{(2)}_{n-2,r-2.}$$
Note in any case that
$$s^{(2)}_{n,r}=s^{(2)}_{n-2,r-2}+(n-1)+\left\lfloor \frac{r}{2}\right\rfloor.$$
Therefore, if $\dim K(\calS) \leq s^{(2)}_{n-2,r-2}$ then $\dim \calS \leq s^{(2)}_{n,r}$.
Assume now that $s^{(1)}_{n-2,r-2}> s^{(2)}_{n-2,r-2}$, so that $r-2>1$.
Then, we split the discussion into two subcases, whether $r$ is even or odd.
\begin{itemize}
\item Assume that $r=2s$ for some integer $s \geq 1$. By Remark \ref{dimensionremarksymmetric}, we
find that $5(s-1)\geq 2(n-2)-1$, that is $5s \geq 2n$.
On the other hand,
$$\dim \calS \leq \dbinom{2s-1}{2}+(n-1)+s,$$
whereas
\begin{align*}
\dbinom{2s+1}{2}-\dbinom{2s-1}{2}-(n-1)-s & =2s+2s-1-n+1-s \\
& =3s-n>0,
\end{align*}
which yields
$$\dim \calS < \dbinom{2s+1}{2}.$$

\item Assume that $r=2s+1$ for some integer $s\geq 1$. By Remark \ref{dimensionremarksymmetric}, we
find that $5(s-1)\geq 2(n-2)-5$, that is $5(s+1)\geq 2(n+1)-1$.
On the other hand,
$$\dim \calS \leq \dbinom{2s}{2}+(n-1)+s,$$
whereas
\begin{align*}
\dbinom{2s+2}{2}-\dbinom{2s}{2}-(n-1)-s & =2s+1+2s-n+1-s \\
& =3(s+1)-(n+1) \\
& \geq \frac{n-2}{5}>0,
\end{align*}
which yields $\dim \calS < \dbinom{2s+2}{2}$.
\end{itemize}
In any case we have proved that $\dim \calS \leq \max\bigl(s^{(1)}_{n,r},s^{(2)}_{n,r}\bigr)$.

\subsection{Case 3: $S_H$ is non-zero and contains no rank $2$ matrix}\label{section4.3}

In particular, $\dim S_H=1$ and $S_H$ contains $E_{n,n}$.
Let us then split every matrix $M \in \calS$ up as
$$M=\begin{bmatrix}
P(M) & [?]_{(n-1) \times 1} \\
[?]_{1 \times (n-1)} & ?
\end{bmatrix} \quad \text{with $P(M) \in \Mats_{n-1}(\K)$.}$$
Using Lemma \ref{symmetriccornerlemma2}, we find that the affine subspace
$P(\calS)$ of $\Mats_{n-1}(\K)$ satisfies
$$\urk P(\calS) \leq r-1.$$
Moreover, the rank theorem yields
$$\dim \calS=\dim P(\calS)+1.$$
By induction, we have
$$\dim P(\calS) \leq s^{(1)}_{n-1,r-1} \quad \text{or} \quad \dim P(\calS) \leq s^{(2)}_{n-1,r-1.}$$
Yet, $s^{(1)}_{n,r}-s^{(1)}_{n-1,r-1}=r\geq 1$ and hence the first outcome would yield
$$\dim P(\calS)\leq s^{(1)}_{n,r.}$$
Moreover, one checks that
$$s^{(2)}_{n,r}-s^{(2)}_{n-1,r-1}=\begin{cases}
1+\frac{r-1}{2} & \text{if $r$ is odd} \\
n-1             & \text{if $r$ is even.}
\end{cases}$$
In any case, we see that $s^{(2)}_{n-1,r-1}+1 \leq s^{(2)}_{n,r}$.
Hence,
$$\dim P(\calS) \leq s^{(2)}_{n-1,r-1} \Rightarrow \dim \calS \leq s^{(2)}_{n,r.}$$
Thus, in any case we have proved that
$$\dim \calS\leq \max\bigl(s^{(1)}_{n,r}, s^{(2)}_{n,r}\bigr).$$
This completes our inductive proof of the inequality statement in Theorem \ref{maintheosymmetric}.

\section{The maximal dimension for spaces of symmetric matrices: the characteristic $2$ case}\label{section5}

In this section, we complete the proof of the inequality statement in Theorem
\ref{maintheosymmetric} by tackling the special case of fields of characteristic $2$.
Here, things are made somewhat more complex by the failure of the conclusion of Lemma \ref{symmetriccornerlemma1}
when $d \neq 0$ and $\# \K=2$, and by the fact that Lemma \ref{keylemmasymmetriccar2} does not yield any satisfying result when $n=2s+1$.

Assume that $\K$ has characteristic $2$.
Let $n$ and $r$ be non-negative integers such that $r<n$.
Let $\calS$ be an affine subspace of $\Mats_n(\K)$ whose translation vector space will be denoted by $S$.
Assume that $\urk \calS \leq r$. If $r=0$, we obviously have $\dim \calS=0 =\max\bigl(s_{n,r}^{(1)},s_{n,r}^{(2)}\bigr)$.
In the rest of the proof, we assume that $r>0$.
From there, we distinguish between two main cases, whether
$r<n-1$ or $r=n-1$. In the first case, we shall rely upon Lemma \ref{keylemmasymmetriccar2}, whereas
in the second one we will use Lemma \ref{lastkeylemma}.

\subsection{Case 1: $r<n-1$.}

Lemma \ref{keylemmasymmetriccar2} yields a linear hyperplane $H$ of $\K^n$ such that
$\dim S_H \leq \lfloor \frac{r}{2}\rfloor$.
Without loss of generality, we can assume that $H=\K^{n-1} \times \{0\}$.
Then, we distinguish between several subcases, according to the shape of $S_H$.

\subsubsection{Subcase 1.1: $S_H=\{0\}$.}

Then, we proceed as in Section \ref{section4.1} to obtain
$$\dim \calS \leq \max\left(s^{(1)}_{n-1,r},s^{(2)}_{n-1,r}\right) \leq \max\left(s^{(1)}_{n,r},s^{(2)}_{n,r}\right).$$

\subsubsection{Subcase 1.2: $S_H$ contains a non-zero alternating matrix.}

Then, without loss of generality we can assume that $S_H$ contains $E_{1,n}+E_{n,1}$.
Then, as in Section \ref{section4.1}, we use Lemma \ref{symmetriccornerlemma1} and the induction hypothesis to obtain
$$\dim \calS \leq (n-1)+\left\lfloor \frac{r}{2}\right\rfloor+\max\left(s^{(1)}_{n-2,r-2},s^{(2)}_{n-2,r-2}\right)
\leq \max\left(s^{(1)}_{n,r},s^{(2)}_{n,r}\right).$$

\subsubsection{Subcase 1.3: $S_H$ is non-zero and contains no alternating matrix.}\label{section5.1.3}

In particular, $\dim S_H=1$ and
$S$ contains $\begin{bmatrix}
[0]_{(n-1) \times (n-1)} & C \\
C^T & 1
\end{bmatrix}$ for some $C \in \K^{n-1}$.
Then, we split every matrix $M \in \calS$ up as
$$M=\begin{bmatrix}
P(M) & [?]_{(n-1) \times 1} \\
[?]_{1 \times (n-1)} & ?
\end{bmatrix} \quad \text{with $P(M) \in \Mats_{n-1}(\K)$.}$$
By Lemma \ref{keylemmasymmetriccar2},
the affine space $\calT:=CC^T+P(\calS)$ has upper-rank less than $r$.
By induction, we deduce that
$$\dim P(\calS)=\dim \calT \leq \max\left(s^{(1)}_{n-1,r-1},s^{(2)}_{n-1,r-1}\right),$$
and since $\dim S_H=1$ we use the rank theorem like in Section \ref{section4.3} to conclude that
$$\dim \calS \leq \max\left(s^{(1)}_{n,r},s^{(2)}_{n,r}\right).$$

\subsection{Case 2: $r=n-1$.}

In that case, we note that
$$\max\left(s^{(1)}_{n,r},s^{(2)}_{n,r}\right)=s^{(1)}_{n,n-1}=\dbinom{n}{2}.$$
If $\calS=\Mata_n(\K)$, then we readily have $\dim \calS=\dbinom{n}{2}$, which is the desired outcome. \\
In the rest of this section, we assume that $\calS \neq \Mata_n(\K)$.
By Lemma \ref{lastkeylemma}, we can then find a linear hyperplane $H$ of $\K^n$ such that
$S_H$ does not include $(\Mata_n(\K))_H$. In particular, $\dim S_H \leq n-1$.
Without loss of generality, we can assume that $H=\K^{n-1} \times \{0\}$.

Then, we split the discussion once more into several subcases.

\subsubsection{Subcase 2.1: $S_H=\{0\}$.}

Then, we directly obtain $\dim \calS \leq \dbinom{n}{2}$.

\subsubsection{Subcase 2.2: $S_H$ contains a non-alternating matrix.}

Then, working like in Section \ref{section5.1.3}, we obtain
$$\dim \calS \leq \dim S_H+\max\Bigl(s^{(1)}_{n-1,n-2}, s^{(2)}_{n-1,n-2}\Bigr)
\leq n-1+\dbinom{n-1}{2}=\dbinom{n}{2}.$$

\subsubsection{Subcase 2.3: $S_H$ is non-zero and contains only alternating matrices.}

Then, $\dim S_H \leq n-2$ since $\Mata_n(\K)_H \not\subset S_H$. Using the same line of reasoning as in Section \ref{section4.2},
we deduce that
\begin{align*}
\dim \calS & \leq \dim S_H+(n-1)+\max\left(s^{(1)}_{n-2,n-3}, s^{(2)}_{n-2,n-3}\right) \\
& \leq (n-2)+(n-1)+\dbinom{n-2}{2}=\dbinom{n}{2}.
\end{align*}
The inequality statement in Theorem \ref{maintheosymmetric} is now established in all situations.

\subsection{Some corollaries}\label{corollsymmetricsection}

As in Section \ref{corollalternatingsection}, we obtain the
following two corollaries of the inequality statement in Theorem \ref{maintheosymmetric}.

\begin{cor}\label{generationcor1symmetric}
Let $n$ and $r$ be non-negative integers, with $r<n$.
Let $\calS$ be an affine subspace of $\Mats_n(\K)$ such that
$$\dim \calS \geq 2+\max\bigl(s^{(1)}_{n,r},s^{(2)}_{n,r}\bigr).$$
Then, the affine space $\calS$ is generated by its matrices with rank greater than $r$.
\end{cor}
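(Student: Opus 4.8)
The plan is to prove Corollary \ref{generationcor1symmetric} exactly as Corollary \ref{generationcor1alternating} was proved in Section \ref{corollalternatingsection}, by a clean contrapositive argument that reduces the generation statement to the inequality statement in Theorem \ref{maintheosymmetric}. The key observation is that ``$\calS$ is generated by its matrices of rank greater than $r$'' is an assertion about an affine span, and its failure produces an affine hyperplane avoiding all high-rank matrices, to which the dimensional bound applies.

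First I would argue by contradiction: suppose $\calS$ is \emph{not} generated by its matrices with rank greater than $r$. Let $\calT$ denote the affine span of the set of all matrices in $\calS$ whose rank exceeds $r$ (with the convention that $\calT=\varnothing$ if there are none). By assumption $\calT \neq \calS$, so $\calT$ is contained in some affine hyperplane $\calH$ of $\calS$, that is, an affine subspace of $\calS$ with $\dim \calH=\dim \calS-1$. By construction every matrix in $\calS$ of rank greater than $r$ lies in $\calH$; equivalently, every matrix of $\calS \setminus \calH$ has rank at most $r$.

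Next I would pass to a parallel hyperplane to isolate a space of bounded upper-rank. Since $\calH$ is an affine hyperplane of the affine space $\calS$, its translation vector space $T$ is a linear hyperplane of $S$, and I may choose any affine hyperplane $\calH'$ of $\calS$ parallel to $\calH$ (same translation space $T$) but distinct from it. Then $\calH' \subset \calS \setminus \calH$, so every matrix of $\calH'$ has rank at most $r$, that is $\urk \calH' \leq r$. Applying the inequality statement in Theorem \ref{maintheosymmetric} to the affine subspace $\calH'$ of $\Mats_n(\K)$ gives
$$\dim \calH' \leq \max\bigl(s^{(1)}_{n,r},s^{(2)}_{n,r}\bigr).$$
But $\dim \calH'=\dim \calH=\dim \calS-1$, so
$$\dim \calS \leq 1+\max\bigl(s^{(1)}_{n,r},s^{(2)}_{n,r}\bigr),$$
which contradicts the hypothesis $\dim \calS \geq 2+\max\bigl(s^{(1)}_{n,r},s^{(2)}_{n,r}\bigr)$.

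The argument is routine and I do not anticipate a genuine obstacle; the only point requiring a little care is the degenerate boundary case. If $\calS$ contains no matrix of rank greater than $r$ at all, then $\urk \calS \leq r$ and Theorem \ref{maintheosymmetric} already forces $\dim \calS \leq \max\bigl(s^{(1)}_{n,r},s^{(2)}_{n,r}\bigr)$, again contradicting the dimension hypothesis; so this case cannot occur and need not be treated separately. One must also ensure that a parallel hyperplane $\calH'$ distinct from $\calH$ exists, which holds because $\dim \calS=\dim \calH+1 \geq 1$ guarantees $S \neq T$ and the translates of $T$ inside $S$ form a one-dimensional family. This is precisely the ``extra slack of $2$'' built into the hypothesis: one unit is spent lowering the dimension by passing to a hyperplane, and the strict inequality then clashes with the Theorem's bound.
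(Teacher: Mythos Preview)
Your argument is correct and is exactly the approach the paper takes: it refers back to the proof of Corollary \ref{generationcor1alternating}, which is precisely this contrapositive passage to a parallel affine hyperplane followed by an application of the inequality statement. Your write-up simply spells out in more detail what the paper indicates in one line.
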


\begin{cor}\label{generationcor2symmetric}
Let $n$ be a non-negative integer. Then, the affine space $\Mats_n(\K)$ is
 generated by its non-singular matrices unless $n=1$ and $\# \K=2$.
\end{cor}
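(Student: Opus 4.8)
The plan is to deduce the statement from Corollary \ref{generationcor1symmetric}, applied to $\calS=\Mats_n(\K)$ with $r=n-1$, and to dispatch the small cases $n\in\{0,1\}$ by hand. Note first that a symmetric matrix is non-singular precisely when it has rank greater than $n-1$, so the matrices invoked in Corollary \ref{generationcor1symmetric} for this value of $r$ are exactly the non-singular symmetric matrices.

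First I would record the relevant dimensions. We have $\dim \Mats_n(\K)=\binom{n+1}{2}$ and $s^{(1)}_{n,n-1}=\binom{n}{2}$. By Remark \ref{dimensionremarksymmetric} one has $s^{(1)}_{n,n-1}>s^{(2)}_{n,n-1}$ whenever $n>3$, while a direct evaluation gives $s^{(1)}_{n,n-1}=s^{(2)}_{n,n-1}$ for $n\in\{2,3\}$; hence $\max\bigl(s^{(1)}_{n,n-1},s^{(2)}_{n,n-1}\bigr)=\binom{n}{2}$ for every $n\geq 2$. Using the elementary identity $\binom{n+1}{2}-\binom{n}{2}=n$, this yields
$$\dim \Mats_n(\K)=\binom{n}{2}+n\geq 2+\max\bigl(s^{(1)}_{n,n-1},s^{(2)}_{n,n-1}\bigr)\qquad\text{for all }n\geq 2.$$
Thus, for $n\geq 2$, Corollary \ref{generationcor1symmetric} applies directly and shows that $\Mats_n(\K)$ is generated by its non-singular matrices.

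It then remains to treat $n\in\{0,1\}$, where the dimension inequality above fails. For $n=0$ the space is trivial and the conclusion is vacuous. For $n=1$, the non-singular matrices are the $[a]$ with $a\in \K\setminus\{0\}$; if $\#\K>2$ there are two distinct such scalars, and the affine line through the corresponding matrices is already all of $\Mats_1(\K)$, whereas if $\#\K=2$ the only non-singular matrix is $[1]$, whose affine span is the single point $\{[1]\}\neq \Mats_1(\K)$ — which is exactly the excluded case. Since the substantial work is carried out by Corollary \ref{generationcor1symmetric}, the only point demanding care is the verification that the dimension inequality holds for all $n\geq 2$ (it is tight at $n=2$, where $\binom{3}{2}=3=2+1$) and breaks down precisely at $n=1$, pinpointing the sole exception.
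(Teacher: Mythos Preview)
Your proof is correct and follows essentially the same approach the paper indicates: the paper does not spell out a proof of Corollary \ref{generationcor2symmetric} but refers back to Section \ref{corollalternatingsection}, where the analogous alternating result is derived from Corollary \ref{generationcor1alternating} together with a direct treatment of the small cases. Your verification that $\dim \Mats_n(\K)=\binom{n+1}{2}\geq 2+\binom{n}{2}=2+\max\bigl(s^{(1)}_{n,n-1},s^{(2)}_{n,n-1}\bigr)$ for all $n\geq 2$, and your separate handling of $n\in\{0,1\}$, are exactly what is needed to make this analogy explicit.
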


\section{Spaces of alternating matrices with the maximal dimension}\label{section6}

In this section, we prove the second statement in Theorem \ref{maintheoalternate}, that is we
determine the subspaces of $\Mata_n(\K)$ with upper-rank $r=2s<n$ and with the critical dimension
$\max\bigl(a^{(1)}_{n,r}, a^{(2)}_{n,r}\bigr)$.
Once more, the proof is done by induction over $n$ and $r$.

The case $n<3$ is trivial, and from now on we assume that $n \geq 3$.
Let $r=2s$ be an even integer such that $0 \leq r<n$.

Let $\calS$ be an affine subspace of $\Mata_n(\K)$ such that
$$\urk \calS \leq r \quad \text{and} \quad
\dim \calS=\max\left(a^{(1)}_{n,r}, a^{(2)}_{n,r}\right).$$
We wish to prove that $\calS$ is congruent to $\widetilde{\Mata_{r+1}(\K)}^{(n)}$ or $\WA_{n,r}(\K)$, or
that $\# \K=2$, $n=4$ and $\calS$ is congruent to $\calU(\K)$.
The case $r=0$ is trivial. If $r=n-1$ then we see that $a^{(2)}_{n,r} \leq a^{(1)}_{n,r}$; then,
$\dim \calS=\dbinom{n}{2}=\dim \Mata_n(\K)$ and it follows that $\calS=\Mata_n(\K)$.
In the rest of the proof, we assume that $1 \leq r \leq n-2$.

To prove the claimed statement, we come right back to the line of reasoning of Section \ref{section3}.
We lose no generality in assuming that, for the linear hyperplane $H=\K^{n-1} \times \{0\}$, we have $\dim S_H\leq s$.
From there, we split the discussion along the form of $S_H$.

\subsection{Case 1: $S_H=\{0\}$.}\label{section6.1}

We split every matrix $M$ in $\calS$ up as
$$M=\begin{bmatrix}
P(M) & C(M) \\
-C(M)^T & 0
\end{bmatrix} \quad \text{with $P(M) \in \Mata_{n-1}(\K)$ and $C(M) \in \K^{n-1}$.}$$
Then, $P(\calS)$ is an affine subspace of $\Mata_{n-1}(\K)$ with upper-rank less than or equal to $r$
and
$$\dim \calS=\dim P(\calS).$$
By the first statement of Theorem \ref{maintheoalternate}, we know that
$$\dim P(\calS) \leq \max\bigl(a_{n-1,r}^{(1)},a_{n-1,r}^{(2)}\bigr).$$
Inequality $a_{n-1,r}^{(2)} \geq a_{n-1,r}^{(1)}$ would  lead to $\dim \calS \leq a_{n-1,r}^{(2)}<a_{n,r}^{(2)}$, in contrast with our assumptions.
Thus, $a_{n-1,r}^{(1)}>a_{n-1,r}^{(2)}$.
In particular, we do not have $(n-1,r)=(4,2)$.
By induction, we deduce that $P(\calS)$ is congruent to
$\widetilde{\Mata_{r+1}(\K)}^{(n)}$.
Without further loss of generality, we can assume that $P(\calS)=\widetilde{\Mata_{r+1}(\K)}^{(n)}$.

Now, as $S_H=\{0\}$ the factorization lemma yields affine
mappings $C_1 : \Mata_{r+1}(\K) \rightarrow \K^{r+1}$ and $C_2 : \Mata_{r+1}(\K) \rightarrow \K^{n-r-2}$
such that $\calS$ is the set of all matrices of the form
$$M(A)=\begin{bmatrix}
A & [0]_{(r+1) \times (n-r-2)} & C_1(A) \\
[0]_{(n-r-2)\times (r+1)} & [0]_{(n-r-2)\times (n-r-2)} & C_2(A) \\
-C_1(A)^T & -C_2(A)^T & 0
\end{bmatrix} \quad \text{with $A \in \Mata_{r+1}(\K)$.}$$
For all $A \in \Mata_{r+1}(\K)$, we see that $\rk M(A) \geq \rk A+2$ if $C_2(A) \neq 0$, and
hence $C_2$ vanishes at every rank $r$ matrix of $\Mata_{r+1}(\K)$.
As $r\geq 2$, Corollary \ref{generationcor2alternating} shows that the set of all rank $r$ matrices of $\Mata_{r+1}(\K)$
generates the affine space $\Mata_{r+1}(\K)$, whence $C_2=0$.

\begin{claim}\label{claimRC1}
If $r>2$ or $\# \K>2$ then $C_1$ is range-compatible (and hence, linear). \\
If $r=2$ and $\# \K=2$, then $C_1$ maps every \emph{non-zero} matrix of $\Mata_3(\K)$ to a vector of its range.
\end{claim}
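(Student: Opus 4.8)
The plan is to first isolate the only pointwise constraint that $\urk \calS \leq r$ imposes on $C_1$, and then to propagate it from the top-rank matrices to all matrices. Since $C_2=0$, the middle block of $M(A)$ is entirely zero, so for every $A \in \Mata_{r+1}(\K)$ one has
\[
\rk M(A)=\rk \begin{bmatrix} A & C_1(A) \\ -C_1(A)^T & 0 \end{bmatrix},
\]
an alternating matrix of size $r+2$. First I would treat the case $\rk A=r$: then $\Ker A=\K u$ for some non-zero $u$, and since $A$ is alternating $\im A=(\Ker A)^\perp=u^\perp$. A vector $(x,\lambda)$ lies in $\Ker M(A)$ exactly when $Ax+\lambda C_1(A)=0$ and $C_1(A)^Tx=0$. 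If $u^TC_1(A)\neq 0$, then $C_1(A)\notin \im A$, so no solution with $\lambda\neq 0$ exists and $\lambda=0$; then $x\in \K u$, and $C_1(A)^Tx=0$ forces $x=0$. Hence $M(A)$ is non-singular and $\rk M(A)=r+2$, contradicting $\urk \calS\le r$. Therefore $C_1(A)\in \im A$ for every $A$ of rank $r$.

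The second, and main, step lifts this to arbitrary $A$ when $r>2$ or $\#\K>2$. Fix such an $A$ and a non-zero $u\in \Ker A$; it suffices to prove $u^TC_1(A)=0$, which gives $C_1(A)\in (\Ker A)^\perp=\im A$. Consider the linear subspace $W_u:=\{B\in \Mata_{r+1}(\K) : Bu=0\}$, which contains $A$. Normalising $u=e_{r+1}$ by a congruence, $W_u$ identifies with $\Mata_r(\K)$ via $B\mapsto B'$ where $B=B'\oplus 0_1$, and this identification preserves the rank; in particular the rank-$r$ elements of $W_u$ correspond to the non-singular elements of $\Mata_r(\K)$. By Corollary \ref{generationcor2alternating}, which applies precisely because we are not in the case $r=2$ and $\#\K=2$, those non-singular matrices generate $\Mata_r(\K)$ as an affine space, so the rank-$r$ elements of $W_u$ generate $W_u$. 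Now $B\mapsto u^TC_1(B)$ is an affine functional on $W_u$ that vanishes on every rank-$r$ element of $W_u$ by the first step (each such $B$ has $\Ker B=\K u$, hence $C_1(B)\in u^\perp$); vanishing on an affine generating family, it vanishes on all of $W_u$, and in particular at $A$. This shows $C_1$ is range-compatible, and since $C_1(0)\in \im 0=\{0\}$ forces $C_1(0)=0$, the affine map $C_1$ is linear.

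Finally, in the exceptional case $r=2$ and $\#\K=2$ nothing remains beyond the first step: every non-zero matrix of $\Mata_3(\K)$ has even positive rank at most $2$, hence rank exactly $2$, so $C_1$ maps it into its range by the rank-$r$ analysis. The delicate point is the lifting step: the bound on $\urk \calS$ only controls $C_1$ on the top-rank stratum, and transferring this information to the lower-rank matrices rests on slicing by the spaces $W_u$ and on the generation result for $\Mata_r(\K)$. This is also where the dichotomy in the statement originates, since $\Mata_2(\F_2)$ is the unique space of alternating matrices not generated by its non-singular elements, so at $A=0$ the functional $B\mapsto u^TC_1(B)$ need not vanish and range-compatibility can genuinely fail.
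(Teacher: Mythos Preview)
Your proof is correct and follows essentially the same approach as the paper: both arguments rest on Corollary~\ref{generationcor2alternating} applied to $\Mata_r(\K)$ to propagate the conclusion from the top-rank matrices to the whole relevant subspace $W_u\cong\Mata_r(\K)$. Your version is slightly more self-contained (you compute the kernel of $M(A)$ explicitly where the paper merely asserts the rank inequality) and parametrises directly by $u\in\Ker A$ instead of fixing a coordinate hyperplane and invoking congruence, but these are presentational differences only.
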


\begin{proof}
We have an affine map $\varphi : \Mata_r(\K) \rightarrow \K$ such that, for all $B \in \Mata_r(\K)$,
the scalar $\varphi(B)$ is the last entry of $C_1\bigl(B \oplus 0_1\bigr)$.
Yet, $\rk M\bigl(B \oplus 0_1\bigr)>r$ if $\rk B=r$ and $\varphi(B) \neq 0$.
It follows that $\varphi$ vanishes at every rank $r$ matrix of $\Mata_r(\K)$.

Assume that $r \geq 3$ or $\# \K>2$. Then, by Corollary \ref{generationcor2alternating} we deduce that
$\varphi=0$. In other words $C_1(A) \in \K^r \times \{0\}$ whenever $\im A \subset \K^r \times \{0\}$.
Using congruence transformations we can generalize this as follows: for any linear hyperplane $V$ of $\K^{r+1}$
and any matrix $A \in \Mata_{r+1}(\K)$, the inclusion $\im A \subset V$ implies $C_1(A) \in V$.
Now, if we let $A \in \Mata_{r+1}(\K)$, we can write $\im A$ as the intersection of a family of linear hyperplanes
$V_1,\dots,V_p$ of $\K^{r+1}$, and hence $C_1(A) \in \underset{i=1}{\overset{p}{\bigcap}} V_i=\im A$.
Hence, $C_1$ is range-compatible. In particular, $C_1(0)=0$, and as $C_1$ is affine it is actually linear.

Assume finally that $r=2$ and $\# \K=2$. Then, with the same congruence argument as before, we obtain that
$C_1$ maps every rank $2$ matrix of $\Mata_3(\K)$ to a vector of its range.
\end{proof}

Assume that $r>2$ or $\# \K>2$, or that $C_1$ is linear. Then, $C_1$ is range-compatible, and
Theorem \ref{alternatingRC} yields a vector $X \in \K^{r+1}$ such that $C_1(A)=AX$ for all $A \in \Mata_{r+1}(\K)$.
Setting finally $Q:=\begin{bmatrix}
I_{r+1} & [0]_{(r+1) \times (n-r-2)} & -X \\
[0]_{(n-r-2) \times (r+1)} & I_{n-r-2} & [0]_{(n-r-2)\times 1}  \\
[0]_{1 \times (r+1)} & [0]_{1 \times (n-r-2)} & 1
\end{bmatrix}$, we
see that
$$Q^T\, \calS\, Q=\widetilde{\Mata_{r+1}(\K)}^{(n)},$$
which is the desired congruence.

Assume now that $r=2$, $\# \K=2$ and $C_1$ is non-linear.
As $r=2$ we must have $n=4$ (indeed, $a^{(2)}_{n,2}>3$ whenever $n>4$).
Moreover, $C_1(0) \neq 0$. Performing an additional harmless congruence transformation, we can assume that
$C_1(0)=\begin{bmatrix}
1 \\
0 \\
0
\end{bmatrix}$.
Then, we have a \emph{linear} map $\psi : \K^2 \rightarrow \K^2$ and an affine form $\chi : \K^2 \rightarrow \K$ such that
$$\forall X \in \K^2, \; C_1\left(\begin{bmatrix}
0 & -X^T \\
X & [0]_{2 \times 2}
\end{bmatrix}\right) =\begin{bmatrix}
\chi(X) \\
\psi(X)
\end{bmatrix}.$$
The vector $\psi(X)$ is collinear to $X$ for all $X \in \K^2$; this yields a scalar $\lambda$ such that
$\psi : X \mapsto \lambda X$.
Thanks to another harmless congruence transformation, we can assume that $\lambda=0$ and that $\chi$ is constant.
Then, we find scalars $\alpha,\beta,\gamma$ such that
$$C_1 : \begin{bmatrix}
0 & a & b \\
a & 0 & c \\
b & c & 0
\end{bmatrix} \longmapsto \begin{bmatrix}
\alpha c+1 \\
\beta c \\
\gamma c
\end{bmatrix}.$$
With $a=b=0$ and $c=1$, we deduce that $\alpha=1$. \\
With $a=c=1$ and $b=0$, we obtain $\gamma=0$. With $b=c=1$ and $a=0$, we obtain $\beta=0$.
Then, we conclude that $\calS=\calU(\K)$.

\subsection{Case 2: $S_H \neq \{0\}$.}\label{section6.2}

As in Section \ref{case2dimensionalternating}, we lose no generality in assuming that $S_H$ contains $E_{1,n}-E_{n,1}$,
and then we split every matrix $M$ of $\calS$ up as
$$M=\begin{bmatrix}
0 & [?]_{1 \times (n-2)} & ? \\
[?]_{(n-2) \times 1} & K(M) & [?]_{(n-2) \times 1} \\
? & [?]_{1 \times (n-2)} & 0
\end{bmatrix}.$$
Then, we find that
$$\urk K(\calS) \leq r-2 \quad \text{and} \quad \dim \calS \leq \dim K(\calS)+(n-2)+s.$$
By the inequality statement in Theorem \ref{maintheoalternate}, we have
$$\dim K(\calS) \leq \max\left(a^{(1)}_{n-2,r-2}, a^{(2)}_{n-2,r-2}\right).$$

Assume that $a^{(1)}_{n-2,r-2} \geq a^{(2)}_{n-2,r-2}$.
Then, either $s-1=0$ or $5(s-1) \geq 2(n-2)-3$, and in the latter case the line of reasoning from Section \ref{case2dimensionalternating}
would yield $\dim \calS < a^{(1)}_{n,r}$, contradicting our assumptions.
Thus, $s=1$, and hence $K(\calS)=\{0\}=\WA_{n-2,r-2}(\K)$.

Assume now that $a^{(1)}_{n-2,r-2}<a^{(2)}_{n-2,r-2}$, so that $(n-2,r-2) \neq (4,2)$.
Then, by the rank theorem,
$$\dim \calS \leq \dim K(\calS)+(n-2)+\dim S_H \leq a^{(2)}_{n-2,r-2}+(n-2)+s=a^{(2)}_{n,r} \leq \dim \calS,$$
which shows that $\dim K(\calS)=a^{(2)}_{n-2,r-2}$ and $\dim \calS=a^{(2)}_{n,r}$. Then,
as $a^{(1)}_{n-2,r-2}<a^{(2)}_{n-2,r-2}$ and $(n-2,r-2) \neq (4,2)$,
we find by induction that $K(\calS)$ is congruent to $\WA_{n-2,r-2}(\K)$.

Therefore, in any case $K(\calS)$ is congruent to $\WA_{n-2,r-2}(\K)$.
We lose no generality in assuming that $K(\calS)=\WA_{n-2,r-2}(\K)$, and from now on we assume that this holds.

Now, we can split every matrix of $\calS$ up as
$$M=\begin{bmatrix}
A(M) & -B(M)^T & [?]_{s \times 1} \\
B(M) & [0]_{(n-s-1) \times (n-s-1)} & C(M) \\
[?]_{1 \times s} & -C(M)^T & 0
\end{bmatrix}$$
where $A(M) \in \Mata_s(\K)$, $B(M) \in \Mat_{n-s-1,s}(\K)$ and $C(M) \in \K^s$.

Set
$$\calV:=\biggl\{\begin{bmatrix}
A(M) & -B(M)^T  \\
B(M) & [0]_{(n-s-1) \times (n-s-1)}
\end{bmatrix} \mid M \in \calS \biggr\}$$
and
$$\calT:=\Bigl\{\begin{bmatrix}
B(M) & C(M)
\end{bmatrix} \mid M \in \calS \Bigr\} \subset \Mat_{n-s-1,s+1}(\K).$$
Note that $\calV$ is a subspace of $\WA_{n-1,r}(\K)$, and
$$\dim \calV = \dim \calS-\dim S_H \geq a_{n,r}^{(2)}-s=a_{n-1,r}^{(2)}.$$
In turn, this shows that
$$\calV=\WA_{n-1,r}(\K),$$
and we deduce that $B(\calS)=\Mat_{n-s-1,s}(\K)$.
For every $M \in \calS$, we see by a standard rank computation that
$$\rk M \geq \rk \begin{bmatrix}
B(M) & C(M)
\end{bmatrix}+\rk \begin{bmatrix}
-B(M)^T \\
-C(M)^T
\end{bmatrix}=2 \rk \begin{bmatrix}
B(M) & C(M)
\end{bmatrix}.$$
It follows that $\urk \calT \leq s$. Moreover $s+1 \leq n-s-1$.
Then, we can try to apply Corollary \ref{Flanderscor} to $\calT$.
Note that if the assumptions of this result are not satisfied by $\calT$, we find
$n-s-1=s+1=2$, $\# \K=2$ and $\calT$ does not contain the zero matrix, the former of which leads to
$s=1$ and $n=4$.

\vskip 3mm
\noindent \textbf{Case a: Corollary \ref{Flanderscor} applies to $\calT$.} \\
Then, we obtain a vector $Y \in \K^s$ such that $C(M)=B(M)Y$ for all $M \in \calS$.
Setting
$$Q:=\begin{bmatrix}
I_s & [0]_{s \times (n-1-s)} & -Y \\
[0]_{(n-1-s) \times s} & I_{n-1-s} & [0]_{(n-1-s) \times 1} \\
[0]_{1 \times s} & [0]_{1 \times (n-1-s)} & 1
\end{bmatrix},$$
we see that replacing $\calS$ with $Q^T \calS Q$ affects none of the previous
assumptions but in that reduced situation we also have $C(M)=0$ for all $M \in \calS$.
Thus, in that situation $\calS \subset \WA_{n,r}(\K)$. As
$\dim \calS \geq a_{n,r}^{(2)}=\dim \WA_{n,r}(\K)$, we conclude that $\calS=\WA_{n,r}(\K)$.

\vskip 3mm
\noindent \textbf{Case b: $n=4$, $s=1$, $\# \K=2$ and $\calT$ is not a linear subspace of $\Mat_2(\K)$.} \\
As $M \in \calS \mapsto B(M) \in \K^2$ is surjective we learn that there is a matrix of
the form $\begin{bmatrix}
[0]_{2 \times 1} & X
\end{bmatrix}$ in $\calT$. Then, $X \neq 0$ since $\calT$ is not a linear subspace of $\Mat_2(\K)$.
Using an additional congruence transformation we can reduce the situation to the one where $X=\begin{bmatrix}
1 \\
0
\end{bmatrix}$. Yet $B(\calS)=\K^2$, whereas Flanders's theorem shows that $\dim \calT \leq 2$. Hence,
there are scalars $\alpha,\beta,\gamma,\delta$ such that
$$\calT=\left\{\begin{bmatrix}
x & \alpha x+\beta y+1 \\
y & \gamma x+\delta y
\end{bmatrix}\mid (x,y)\in \K^2\right\}.$$
Computing the determinant yields
$$\forall (x,y)\in \K^2, \; (\delta+\alpha)xy+\gamma x+(\beta+1) y=0,$$
which leads to $\gamma=0$, $\delta=\alpha$ and $\beta=1$. Performing an additional congruence transformation then
leaves us with the case when
$$\calT=\left\{\begin{bmatrix}
a & b+1 \\
b & 0
\end{bmatrix} \mid (a,b)\in \K^2\right\}.$$
Hence,
$$\calS \subset \left\{\begin{bmatrix}
0 & a & b & c \\
a & 0 & 0 & b+1 \\
b & 0 & 0 & 0 \\
c & b+1 & 0 & 0
\end{bmatrix}\mid (a,b,c)\in \K^3\right\}.$$
As the dimension of each space equals $3$, we deduce that they are equal.
Finally, using the congruence transformation $M \mapsto QMQ^T$ for
$Q:=\begin{bmatrix}
1 & 0 & 0 & 0 \\
0 & 1 & 0 & 0 \\
0 & 0 & 0 & 1 \\
0 & 0 & 1 & 0
\end{bmatrix}$, we conclude that $\calS$ is congruent to $\calU(\K)$.

This completes our proof of Theorem \ref{maintheoalternate}.

\section{Spaces of symmetric matrices with the maximal dimension: the characteristic not $2$ case}\label{section7}

In this section, we prove the second statement in Theorem \ref{maintheosymmetric} for fields with characteristic not $2$.
In other words, by induction over $n$ and $r$, we shall classify the affine subspaces of $\Mats_n(\K)$ with upper-rank $r<n$ and with the critical dimension
$\max\left(s^{(1)}_{n,r}, s^{(2)}_{n,r}\right)$.

Let $n,r$ be non-negative integers such that $r<n$, and assume that $\K$ has characteristic not $2$.
Throughout the proof, we set
$$s:=\left \lfloor \frac{r}{2}\right\rfloor.$$
Let $\calS$ be an affine subspace of $\Mats_n(\K)$ such that
$$\urk \calS \leq r \quad \text{and} \quad \dim \calS=\max\bigl(s^{(1)}_{n,r}, s^{(2)}_{n,r}\bigr).$$
We wish to prove that $\calS$ is congruent to $\widetilde{\Mats_r(\K)}^{(n)}$ or to $\WS_{n,r}(\K)$.
The case $r=0$ is trivial, and the case $r=1$ has been dealt with in Proposition \ref{rankatmost1lemma}.
Thus, in the rest of the proof we assume that $r \geq 2$.

To prove the claimed statement, we come right back to the line of reasoning of Section \ref{section4}.
Denote by $m$ the minimal dimension among the spaces of type $S_H$, where $H$ ranges over the linear hyperplanes of $\K^n$.
By Lemma \ref{keylemmasymmetriccarnot2}, we have
$$m \leq s.$$
Without loss of generality, we can now assume that $S_H=m$ for $H:=\K^{n-1} \times \{0\}$.
From there, we split the discussion into three subcases, whether $S_H=\{0\}$ or $S_H$ contains a rank $2$ matrix
or $S_H$ is non-zero and contains no rank $2$ matrix.

\subsection{Case 1: $S_H=\{0\}$.}\label{section7.1}

We can split every matrix of $\calS$ up as
$$M=\begin{bmatrix}
P(M) & [?]_{(n-1) \times 1} \\
[?]_{1 \times (n-1)} & ?
\end{bmatrix}\quad \text{with $P(M) \in \Mats_{n-1}(\K)$.}$$
Then $P(\calS)$ is an affine subspace of $\Mat_{n-1}(\K)$ with
$$\urk P(\calS) \leq r \quad \text{and} \quad
\dim P(\calS)=\dim \calS=\max\left(s_{n,r}^{(1)}, s_{n,r}^{(2)}\right)\geq \max\left(s_{n-1,r}^{(1)}, s_{n-1,r}^{(2)}\right).$$
Hence, by the inequality statement in Theorem \ref{maintheosymmetric}, we have
$$\dim P(\calS)=\max\left(s_{n-1,r}^{(1)}, s_{n-1,r}^{(2)}\right).$$
Assume first that $r<n-1$. If $\dim P(\calS)=s_{n-1,r}^{(2)}$ then $\dim \calS<s_{n,r}^{(2)}$ since $r \geq 2$, contradicting our assumptions.
Hence, $\dim P(\calS)=s_{n-1,r}^{(1)}>s_{n-1,r}^{(2)}$, and by induction we find that
$P(\calS)$ is congruent to $\widetilde{\Mats_r(\K)}^{(n-1)}$.

If $r=n-1$, then, as $\dim P(\calS) \leq \dim \Mats_{n-1}(\K)=s_{n-1,r}^{(1)}$, we readily have
$P(\calS)=\Mats_{n-1}(\K)=\widetilde{\Mats_r(\K)}^{(n-1)}$.

Thus, in any case no generality is lost in assuming that
$$P(\calS)=\widetilde{\Mats_r(\K)}^{(n-1)}.$$
In that situation, the fact that $S_H=\{0\}$ yields affine mappings
$$C_1 : \Mats_r(\K) \rightarrow \K^r, \; C_2 : \Mats_r(\K) \rightarrow \K^{n-r-1}, \; b : \Mats_r(\K) \rightarrow \K$$
such that $\calS$ is the set of all matrices of the form
$$\begin{bmatrix}
N & [0]_{r \times (n-r-1)} & C_1(N) \\
[0]_{(n-r-1) \times r} & [0]_{(n-r-1) \times (n-r-1)} & C_2(N) \\
C_1(N)^T & C_2(N)^T & b(N)
\end{bmatrix} \quad \text{with $N \in \Mats_r(\K)$.}$$
Let $N \in \Mats_r(\K)$ be with rank $r$: then, as $\urk \calS \leq r$ we obtain
$C_2(N)=0$. As $r \geq 1$ and $\K$ has more than $2$ elements, Corollary \ref{generationcor2symmetric}
yields $C_2=0$.

From there, one uses the same line of reasoning as in Section \ref{section6.1}.
As the affine space $\Mats_r(\K)$ is generated  by its rank $r$ matrices (again, by Corollary \ref{generationcor2symmetric}),
one uses the same chain of arguments as in Claim \ref{claimRC1} to obtain:

\begin{claim}\label{claimRC2}
The map $C_1$ is range-compatible (and hence, linear).
\end{claim}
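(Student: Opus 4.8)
The plan is to mirror the proof of Claim \ref{claimRC1}, replacing the alternating matrices with symmetric ones and substituting Corollary \ref{generationcor2symmetric} for Corollary \ref{generationcor2alternating}. Since we have already shown that $C_2=0$, the space $\calS$ consists of the matrices
$$M(N)=\begin{bmatrix}
N & [0]_{r \times (n-r-1)} & C_1(N) \\
[0]_{(n-r-1) \times r} & [0]_{(n-r-1) \times (n-r-1)} & [0]_{(n-r-1) \times 1} \\
C_1(N)^T & [0]_{1 \times (n-r-1)} & b(N)
\end{bmatrix} \qquad (N \in \Mats_r(\K)),$$
and, the central band of rows and columns being entirely zero, the rank of $M(N)$ equals that of the $(r+1)$ by $(r+1)$ matrix $\widetilde{M}(N):=\begin{bmatrix} N & C_1(N) \\ C_1(N)^T & b(N) \end{bmatrix}$ obtained by deleting those rows and columns.

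First I would treat the reference hyperplane $V_0:=\K^{r-1}\times\{0\}$ of $\K^r$. A matrix $N\in\Mats_r(\K)$ satisfies $\im N\subset V_0$ precisely when $N=N'\oplus 0_1$ for some $N'\in\Mats_{r-1}(\K)$; writing $C_1(N)=\begin{bmatrix} c \\ \lambda\end{bmatrix}$ with $c\in\K^{r-1}$ and $\lambda\in\K$, I set $\varphi:\Mats_{r-1}(\K)\to\K,\ N'\mapsto\lambda$, which is an affine map. If $N'$ is invertible, then Lemma \ref{schurcomplementlemma} applied to the top-left block $N'$ of $\widetilde{M}(N)$ yields a $2$ by $2$ Schur complement of the form $\begin{bmatrix} 0 & \lambda \\ \lambda & \ast\end{bmatrix}$, whose determinant equals $-\lambda^2$; hence $\rk M(N)=(r-1)+2=r+1$ as soon as $\lambda\neq 0$, contradicting $\urk\calS\leq r$. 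Thus $\varphi$ vanishes on every invertible matrix of $\Mats_{r-1}(\K)$. As $\K$ has characteristic not $2$ (so $\#\K>2$) and $r-1\geq 1$, Corollary \ref{generationcor2symmetric} guarantees that these matrices generate $\Mats_{r-1}(\K)$ as an affine space, whence $\varphi=0$; equivalently, $\im N\subset V_0$ forces $C_1(N)\in V_0$.

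Next, exactly as in Claim \ref{claimRC1}, I would promote this to an arbitrary hyperplane $V$ of $\K^r$ by a congruence argument: writing $V=PV_0$ with $P\in\GL_r(\K)$ and conjugating $\calS$ by $P\oplus I_{n-r-1}\oplus I_1$ (which turns $N$ into $PNP^T$ and $C_1(N)$ into $PC_1(N)$, preserves every standing hypothesis, and carries $V_0$ to $V$), the implication just obtained for the conjugated space reads, back in $\calS$, as $\im N\subset V\Rightarrow C_1(N)\in V$. Finally, writing $\im N$ as the intersection of all the hyperplanes of $\K^r$ that contain it shows $C_1(N)\in\im N$ for every $N$, that is, $C_1$ is range-compatible; being affine with $C_1(0)\in\im(0)=\{0\}$, it is then linear. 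The only step genuinely specific to the present case is the rank computation for $\widetilde{M}(N)$, where the off-diagonal $\lambda$ in the Schur complement is what forces the rank up by two; everything else is formally identical to the alternating case, and the required generation input is furnished by Corollary \ref{generationcor2symmetric}, whose lone exception ($n=1$ and $\#\K=2$) cannot occur here.
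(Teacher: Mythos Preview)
Your proof is correct and follows essentially the same approach as the paper, which simply refers back to the chain of arguments in Claim~\ref{claimRC1} with Corollary~\ref{generationcor2symmetric} replacing Corollary~\ref{generationcor2alternating}. Your explicit Schur complement computation showing that $\rk\widetilde{M}(N)=r+1$ when $N'$ is invertible and $\lambda\neq 0$ is a clean way to make precise what the paper leaves implicit, and the remaining congruence and hyperplane-intersection steps are identical to those in Claim~\ref{claimRC1}.
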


Then, we deduce from point (a) of Theorem \ref{symmetricRC} that $C_1 : N \mapsto NX$ for some $X \in \K^r$.
By setting
$$Q:=\begin{bmatrix}
I_r & [0]_{r \times (n-r-1)} & -X \\
[0]_{(n-r-1) \times r} & I_{n-r-1} & [0]_{(n-r-1)\times 1} \\
[0]_{1 \times r} & [0]_{1 \times (n-r-1)} & 1
\end{bmatrix},$$
it follows that the space $Q^T \calS Q$ has the same form as $\calS$, with the new $C_1$ map equal to zero.

Thus, no generality is lost in assuming that $C_1=0$. In that situation, we note that $b(N)=0$ for every invertible matrix $N \in \Mats_r(\K)$, which,
by using Corollary \ref{generationcor2symmetric} once more, proves that $b=0$. Hence, in that reduced situation, $\calS=\widetilde{\Mats_r(\K)}^{(n)}$.

\subsection{Case 2: $S_H$ contains a rank $2$ matrix.}\label{section7.2}

As in Section \ref{section4.2}, we can assume that the space $S$ contains a matrix of the form
$$N=\begin{bmatrix}
0 & [0]_{1 \times (n-2)} & 1 \\
[0]_{(n-2) \times 1} & [0]_{(n-2) \times (n-2)} & [0]_{(n-2) \times 1} \\
1 & [0]_{1 \times (n-2)} & ?
\end{bmatrix}.$$
Then, we split every matrix $M \in \calS$ up as
$$M=\begin{bmatrix}
? & [?]_{1 \times (n-2)} & ? \\
[?]_{(n-2) \times 1} & K(M) & [?]_{(n-2) \times 1} \\
? & [?]_{1 \times (n-2)} & ?
\end{bmatrix} \quad \text{with $K(M) \in \Mats_{n-2}(\K)$,}$$
and we obtain that $K(\calS)$ is an affine subspace of $\Mats_{n-2}(\K)$ with $\urk K(\calS) \leq r-2$.
By the inequality statement in Theorem \ref{maintheosymmetric}, we have
$$\dim K(\calS) \leq \max\left(s_{n-2,r-2}^{(1)},s_{n-2,r-2}^{(2)}\right).$$

\begin{claim}
The space $K(\calS)$ is congruent to $\WS_{n-2,r-2}(\K)$, and
$\dim \calS=s^{(2)}_{n,r.}$
\end{claim}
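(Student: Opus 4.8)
The plan is to combine the dimension bound already obtained in Section~\ref{section4.2} with the induction hypothesis, and then to separate $\WS_{n-2,r-2}(\K)$ from $\widetilde{\Mats_{r-2}(\K)}^{(n-2)}$ by means of the $S_H$-spaces. Writing $m:=\dim S_H$, the rank theorem gives $\dim \calS \le \dim K(\calS)+(n-1)+m$ exactly as in Section~\ref{section4.2}, while the inequality statement of Theorem~\ref{maintheosymmetric} applied to $K(\calS)$ yields $\dim K(\calS)\le \max\bigl(s^{(1)}_{n-2,r-2},s^{(2)}_{n-2,r-2}\bigr)$. I would first rule out the possibility $s^{(1)}_{n-2,r-2}>s^{(2)}_{n-2,r-2}$: in that situation the computations of Section~\ref{section4.2} (treating $r$ even and $r$ odd separately through Remark~\ref{dimensionremarksymmetric}) give $\dim \calS< s^{(1)}_{n,r}$, and since that strict inequality forces $s^{(1)}_{n,r}\ge s^{(2)}_{n,r}$, this contradicts $\dim \calS=\max\bigl(s^{(1)}_{n,r},s^{(2)}_{n,r}\bigr)$. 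Hence $s^{(2)}_{n-2,r-2}\ge s^{(1)}_{n-2,r-2}$ and $\dim K(\calS)\le s^{(2)}_{n-2,r-2}$. Using $m\le s$ together with the identity $s^{(2)}_{n,r}=s^{(2)}_{n-2,r-2}+(n-1)+s$, I then get $\dim \calS\le s^{(2)}_{n,r}\le \dim \calS$; this chain must be tight, proving $\dim \calS=s^{(2)}_{n,r}$ and simultaneously forcing $\dim K(\calS)=s^{(2)}_{n-2,r-2}=\max\bigl(s^{(1)}_{n-2,r-2},s^{(2)}_{n-2,r-2}\bigr)$ and $m=s$.

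Next, since $K(\calS)$ attains the critical dimension $\max\bigl(s^{(1)}_{n-2,r-2},s^{(2)}_{n-2,r-2}\bigr)$ and $\urk K(\calS)\le r-2$, the induction hypothesis shows that $K(\calS)$ is congruent to $\widetilde{\Mats_{r-2}(\K)}^{(n-2)}$ or to $\WS_{n-2,r-2}(\K)$. If $s^{(2)}_{n-2,r-2}>s^{(1)}_{n-2,r-2}$ strictly, then $\widetilde{\Mats_{r-2}(\K)}^{(n-2)}$ has dimension $s^{(1)}_{n-2,r-2}<\dim K(\calS)$ and cannot occur, so $K(\calS)$ is congruent to $\WS_{n-2,r-2}(\K)$ and the claim is proved. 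The only remaining possibility is the boundary case $s^{(1)}_{n-2,r-2}=s^{(2)}_{n-2,r-2}$, where both model spaces share the critical dimension and must be told apart.

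To eliminate the alternative $K(\calS)\cong \widetilde{\Mats_{r-2}(\K)}^{(n-2)}$ I would exploit the minimality of $m=s$. Applying a congruence of the form $I_1\oplus Q\oplus I_1$ (which fixes the index set $\{1,n\}$, preserves $H$, and fixes the rank-$2$ matrix $N$), I may assume $K(\calS)=\widetilde{\Mats_{r-2}(\K)}^{(n-2)}$, with its nonzero block supported on a set of $r-2$ middle indices; since $r-2<n-2$, there is a middle index $j$ whose row and column \emph{inside the middle block} vanish on all of $\calS$. For the hyperplane $\widehat{H}:=\{x_j=0\}$, every matrix of $S_{\widehat{H}}$ is then supported on the two symmetric off-diagonal pairs $(1,j)$ and $(n,j)$, so $\dim S_{\widehat{H}}\le 2$; by the minimality of $m$ this forces $s=m\le 2$. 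Consequently, for $s\ge 3$ the space $K(\calS)$ must be congruent to $\WS_{n-2,r-2}(\K)$.

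The hard part will be the residual low-rank configurations. Combining the boundary constraint $s^{(1)}_{n-2,r-2}=s^{(2)}_{n-2,r-2}$ with $s\le 2$ leaves only the two explicit cases $(n,r)=(5,4)$ and $(n,r)=(7,5)$, the case $s\le 1$ (that is $r\le 3$) being vacuous because $\WS_{n-2,r-2}(\K)$ and $\widetilde{\Mats_{r-2}(\K)}^{(n-2)}$ then coincide. For those two cases I would either sharpen the kernel estimate behind the hyperplane bound to obtain $\dim S_{\widehat{H}}\le 1$, thereby reaching the same contradiction, or simply inspect the finitely many configurations directly, using the rank-$2$ matrix $N\in S_H$ and the constraint $\urk \calS\le r$ to check that $\widetilde{\Mats_{r-2}(\K)}^{(n-2)}$ cannot be the middle block of a space of dimension $s^{(2)}_{n,r}$. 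I expect this last distinction in the boundary and small-rank cases to be the only genuinely delicate point of the argument.
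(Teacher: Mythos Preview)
Your first paragraph is essentially the paper's argument and is correct. The divergence begins at the ``boundary case'' $s^{(1)}_{n-2,r-2}=s^{(2)}_{n-2,r-2}$: you treat it as genuinely open and then mount a separate attack via the hyperplane $\widehat H=\{x_j=0\}$, ending with two residual configurations $(n,r)\in\{(5,4),(7,5)\}$ that you leave to an unspecified inspection. This last step is not carried out, so the proposal is incomplete as written.

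The point you missed is that the boundary case never arises once $r\ge 4$. The computation of Section~\ref{section4.2} goes through verbatim under the \emph{non-strict} hypothesis $s^{(1)}_{n-2,r-2}\ge s^{(2)}_{n-2,r-2}$ provided $s-1\ge 1$: from Remark~\ref{dimensionremarksymmetric} one gets $5(s-1)\ge 2(n-2)-1$ (even $r$) or $5(s-1)\ge 2(n-2)-5$ (odd $r$), and the very same subtraction then gives $3s-n\ge \tfrac{s}{2}>0$ (resp.\ $3(s+1)-(n+1)\ge \tfrac{s}{2}>0$), hence $\dim\calS<s^{(1)}_{n,r}$, a contradiction. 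So for $r\ge 4$ you obtain the \emph{strict} inequality $s^{(1)}_{n-2,r-2}<s^{(2)}_{n-2,r-2}$ directly, and $\widetilde{\Mats_{r-2}(\K)}^{(n-2)}$ is excluded on dimensional grounds alone. For $r\in\{2,3\}$ you correctly note that $\widetilde{\Mats_{r-2}(\K)}^{(n-2)}=\WS_{n-2,r-2}(\K)$, so there is nothing to distinguish. This is exactly the paper's two-line split, and it renders your $\widehat H$-argument and the lingering cases $(5,4),(7,5)$ entirely unnecessary. In particular, your ``hard part'' evaporates: for $(n,r)=(5,4)$ one has $5(s-1)=5=2(n-2)-1$ and $3s-n=1>0$; for $(n,r)=(7,5)$ one has $5(s-1)=5=2(n-2)-5$ and $3(s+1)-(n+1)=1>0$; both yield the desired contradiction immediately.
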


\begin{proof}
Assume first that $r \not\in \{2,3\}$.
If $s^{(1)}_{n-2,r-2} \geq s^{(2)}_{n-2,r-2}$,
then $5(s-1) \geq 2(n-2)-1$ if $r$ is even, otherwise $5(s-1) \geq 2(n-2)-5$; in any case,
by following the line of reasoning from Section \ref{section4.2} we find that
$$\dim \calS <s^{(1)}_{n,r},$$
contradicting our assumptions.
It follows that $s^{(1)}_{n-2,r-2} < s^{(2)}_{n-2,r-2.}$
Thus, $\dim K(\calS) \leq s^{(2)}_{n-2,r-2}$, and by using the
chain of inequalities
$$\dim \calS \leq \dim K(\calS)+(n-1)+\dim S_H \leq s^{(2)}_{n-2,r-2}+(n-1)+s=s^{(2)}_{n,r} \leq \dim \calS,$$
we obtain that
$$\dim \calS=s^{(2)}_{n,r} \quad \text{and} \quad \dim K(\calS)=s^{(2)}_{n-2,r-2.}$$
Then, as $s^{(1)}_{n-2,r-2} < s^{(2)}_{n-2,r-2}$ we find by induction that $K(\calS)$ is congruent to
$\WS_{n-2,r-2}(\K)$.

Finally, if $r\in \{2,3\}$ then we see that $s^{(1)}_{n-2,r-2}=s^{(2)}_{n-2,r-2}$, and with the same line of reasoning
as above we find that $\dim \calS=s^{(2)}_{n,r}$ and $\dim K(\calS)=s^{(2)}_{n-2,r-2}$.
Since $\widetilde{S_1(\K)}^{(n)}=\WS_{n,1}(\K)$ and $\widetilde{S_0(\K)}^{(n)}=\WS_{n,0}(\K)$,
the induction hypothesis still yields that $K(\calS)$ is congruent to $\WS_{n-2,r-2}(\K)$.
\end{proof}

Thus, no generality is lost in assuming that
$$K(\calS)=\WS_{n-2,r-2}(\K).$$
Now, writing every matrix $M$ of $\calS$ as
$$M=\begin{bmatrix}
P(M) & [?]_{(n-1) \times 1} \\
[?]_{1 \times (n-1)} & ?
\end{bmatrix},$$
we see that $P(\calS) \subset \WS_{n-1,r}(\K)$, and on the other hand we have
$$s_{n,r}^{(2)} \leq \dim \calS=\dim P(\calS)+\dim S_H \leq s_{n-1,r}^{(2)}+s=s_{n,r}^{(2)}.$$
Hence, $\dim P(\calS)=s_{n-1,r}^{(2)}$, and it follows that
$$P(\calS)=\WS_{n-1,r}(\K) \quad \text{and} \quad m=\dim S_H=s.$$
From there, we split the discussion into two subcases, whether $r$ is even or odd.

\subsubsection{Subcase 2.1: $r$ is even.}\label{section7.2.1}

Then, we split every matrix $M$ of $\calS$ up as
$$M=\begin{bmatrix}
[?]_{s \times s} & B(M)^T & [?]_{s \times 1} \\
B(M) & [0]_{(n-1-s) \times (n-1-s)} & C(M) \\
[?]_{1 \times s} & C(M)^T & a(M)
\end{bmatrix}$$
with $a(M) \in \K$, $B(M) \in \Mat_{n-1-s,s}(\K)$ and $C(M) \in \K^s$.
Since $P(\calS)=\WS_{n-1,r}(\K)$, we find that $B(\calS)=\Mat_{n-1-s,s}(\K)$.
As in Section \ref{section6.2}, we obtain that the affine space
$$\calT:=\Bigl\{\begin{bmatrix}
B(M) & C(M)
\end{bmatrix}\mid M \in \calS\Bigr\}\subset \Mat_{n-1-s,s+1}(\K)$$
satisfies $\urk \calT \leq s$.
Yet, as $\dim \calS=s_{n,r}^{(2)}$ we have $s_{n,r}^{(2)} \geq s_{n,r}^{(1)}$ by the inequality statement from Theorem \ref{maintheosymmetric},
and hence $n \geq \frac{5s+1}{2}\cdot$
If $n-1-s<s+1$ then $\frac{5s+1}{2}-(2s+1)\leq 0$, which, as $s>0$, leads to $s=1$ and $n=3$.

Assume first that $(n,s) \neq (3,1)$.
Then, $n-1-s \geq s+1$ and, as $\# \K>2$ (remember that the characteristic of $\K$ does not equal $2$),
we can follow the line of reasoning from Section \ref{section6.2} to see that, after applying a carefully chosen congruence transformation to $\calS$,
the situation can be reduced to the one where $C=0$.

Then, for all $i \in \lcro s+1,n-1\rcro$, we define $H_i$ as the linear hyperplane associated with the equation $x_i=0$ in the standard basis,
and we see that $S_{H_i} \subset \Vect(E_{i,j}+E_{j,i})_{1 \leq j \leq s.}$ As $m=s$
we deduce that $S_{H_i} = \Vect(E_{i,j}+E_{j,i})_{1 \leq j \leq s.}$

Let $M \in \calS$.
For all $(t_1,\dots,t_s) \in \K^s$, the space $\calS$ contains
$M+\underset{k=1}{\overset{s}{\sum}} t_k (E_{s+k,k}+E_{k,s+k})$ (because $n-s-1 \geq s$) and hence this matrix has rank less than $2s+1$.
Applying Lemma \ref{symmetriccornerlemma1} inductively, we deduce that
$$\rk \begin{bmatrix}
[0]_{(n-2s-1) \times (n-2s-1)} & [0]_{(n-2s-1) \times 1} \\
[0]_{1 \times (n-2s-1)} & a(M)
\end{bmatrix} \leq 2s-2s=0,$$
whence $a(M)=0$.
Thus, $\calS \subset \WS_{n,r}(\K)$. As $s_{n,r}^{(2)}=\dim \calS=\dim \WS_{n,r}(\K)$,
we conclude that $\calS=\WS_{n,r}(\K)$.

Assume finally that $n=3$ and $s=1$.
Remember that $S_H$ contains $E_{1,3}+E_{3,1}+x E_{3,3}$ for some $x \in \K$.
Using a harmless congruence transformation, we can actually assume that $S_H$ contains $E_{1,3}+E_{3,1}$.
As $\dim S_H \leq 1$, we deduce that $S_H=\Vect(E_{1,3}+E_{3,1})$. As $P(\calS)=\WS_{2,1}(\K)$, this yields scalars
$\alpha,\beta,\gamma,\lambda,\mu,\nu$ such that
$$\calS=\Biggl\{\begin{bmatrix}
a & b & c \\
b & 0 & \alpha a+\beta b+\gamma \\
c & \alpha a+\beta b+\gamma & \lambda a+\mu b+\nu
\end{bmatrix}\mid (a,b,c)\in \K^3\Biggr\}.$$
If $\mu$ were non-zero, then, for the hyperplane $H_2:=\K \times \{0\} \times \K$ of $\K^3$,
we would have $S_{H_2}=\{0\}$, contradicting $m=s=1$.

Hence, $\mu=0$. Taking $b=1$ and $a=c=0$ then leads to $\nu=0$.
Taking $b=c=0$ and $a=t$ for $t \in \K\setminus \{0\}$, we find
$\alpha t+\gamma=0$, which leads to $\alpha=\gamma=0$ since $\# \K>2$.
Finally, we compute the determinant in the general case to obtain
$$\forall (a,b,c)\in \K^3, \; 2\beta b^2c-(\lambda+\beta^2)ab^2=0.$$
On the left hand-side we have a polynomial of degree at most $2$ in each variable, and as $\#\K>2$ and $\K$ has characteristic not $2$
we deduce that $\beta=0$ and $\lambda=-\beta^2=0$. Therefore, $\calS=\WS_{3,2}(\K)$.

\subsubsection{Subcase 2.2: $r$ is odd.}\label{section7.2.2}

Let us split every matrix $M$ of $\calS$ up as
$$M=\begin{bmatrix}
[?]_{s \times s} & [?]_{s \times 1} & B(M)^T & [?]_{s \times 1} \\
[?]_{1 \times s} & a(M) & [0]_{1 \times (n-s-2)} & b(M) \\
B(M) & [0]_{(n-s-2) \times 1} & [0]_{(n-s-2) \times (n-s-2)} & C(M) \\
[?]_{1 \times s}  & b(M) & C(M)^T & c(M)
\end{bmatrix}$$
with $B(M) \in \Mat_{n-2-s,s}(\K)$, $C(M) \in \K^{n-2-s}$ and scalars $a(M)$, $b(M)$ and $c(M)$.
As $P(\calS)=\WS_{n-1,r}(\K)$ we note that $B(\calS)=\Mat_{n-s-2,s}(\K)$.

Let us consider the affine space
$$\calT:=\Bigl\{  \begin{bmatrix}
B(M) & C(M)
\end{bmatrix} \mid M \in \calS\Bigr\} \subset \Mat_{n-s-2,s+1}(\K).$$
For all $M$ in $\calS$, we see by standard rank computations that
$$\rk M \geq \rk \begin{bmatrix}
B(M) & C(M)
\end{bmatrix}+\rk \begin{bmatrix}
B(M)^T \\
C(M)^T
\end{bmatrix}=2 \rk \begin{bmatrix}
B(M) & C(M)
\end{bmatrix},$$
and since $\rk M \leq 2s+1$ this leads to $\rk \begin{bmatrix}
B(M) & C(M)
\end{bmatrix} < s+1$.
Therefore,
$$\urk \calT \leq s.$$
Again, as $\dim \calS=s_{n,r}^{(2)}$, the inequality statement in Theorem \ref{maintheosymmetric} leads to
$s_{n,r}^{(2)} \geq s_{n,r}^{(1)}$. As $s>0$, this shows that
$5s \leq 2n-5$, successively leading to $n-2s-3 \geq \frac{s-1}{2} \geq 0$ and to $n-s-2 \geq s+1 \geq 2$.

Since $B(\calS)=\Mat_{n-s-2,s}(\K)$, we can apply Corollary \ref{Flanderscor}, just like in Section \ref{section6.2},
to reduce the situation to the one where $C=0$ (note here that $\# \K>2$ since $\K$ has characteristic not $2$).

Let $i \in \lcro s+2,n-1\rcro$, and consider the hyperplane $H_i$ of $\K^n$ defined by the equation $x_i=0$ in the canonical basis.
As in the previous case, we use the minimality of $m$ to obtain that
$S_{H_i}=\Vect(E_{i,j}+E_{j,i})_{1 \leq j \leq s.}$

Let $M \in \calS$ and set
$$J(M):=\begin{bmatrix}
a(M) & b(M) \\
b(M) & c(M)
\end{bmatrix} \in \Mat_2(\K).$$
For all $(t_1,\dots,t_s) \in \K^s$, the space $\calS$ contains
$M+\underset{k=1}{\overset{s}{\sum}} t_k (E_{s+k+1,k}+E_{k,s+k+1})$ (because $n-s-2 \geq s$) and hence this matrix has rank at most $2s+1$.
Applying Lemma \ref{symmetriccornerlemma1} inductively, we deduce that
$$\rk \begin{bmatrix}
a(M) & [0]_{1 \times (n-2s-2)} & b(M) \\
[0]_{(n-2s-2)\times 1} & [0]_{(n-2s-2) \times (n-2s-2)} & [0]_{(n-2s-2) \times 1} \\
b(M) & [0]_{1 \times (n-2s-2)} & c(M)
\end{bmatrix} \leq 2s+1-2s=1,$$
whence
$$\rk J(M) \leq 1.$$

It follows that $J(\calS)$ is an affine subspace of $\Mat_2(\K)$ with upper-rank at most $1$.
On the other hand $\dim J(\calS) \geq 1$ since $P(\calS)=\WS_{n-1,r}(\K)$.
By the inequality statement in Theorem \ref{maintheosymmetric}, we obtain $\dim J(\calS)=1$, and it follows from
Proposition \ref{rankatmost1lemma} that $J(\calS)$ is congruent to $\Vect(E_{1,1})$ (remember that $\# \K>2$ since $\K$ has characteristic not $2$).

It follows that $\calS$ is congruent to a subspace of $\WS_{n,r}(\K)$.
Since $\dim \calS=s_{n,r}^{(2)}=\dim \WS_{n,r}(\K)$, we conclude that $\calS$ is congruent to $\WS_{n,r}(\K)$.

\subsection{Case 3: The space $S_H$ is non-zero and contains no rank $2$ matrix.}\label{section7.3}

Thus, $S_H=\Vect(E_{n,n})$ and $m=1$.
As in Section \ref{section4.2}, it follows from the inequality statement in Theorem \ref{maintheosymmetric} that
$$\dim \calS-1 \leq s_{n-1,r-1}^{(1)} \quad \text{or} \quad \dim \calS-1 \leq s_{n-1,r-1.}^{(2)}$$
Hence,
$$s_{n,r}^{(1)}-1 \leq s_{n-1,r-1}^{(1)} \quad \text{or} \quad s_{n,r}^{(2)}-1 \leq s_{n-1,r-1.}^{(2)}$$
Yet,
$$s_{n,r}^{(1)}-s_{n-1,r-1}^{(1)}=r>1.$$
Moreover,
$$s_{n,r}^{(2)}-s_{n-1,r-1}^{(2)}=\begin{cases}
1+\frac{r-1}{2} & \text{if $r$ is odd} \\
n-1 & \text{if $r$ is even,}
\end{cases}$$
which, as $r \geq 2$, leads to
$$s_{n,r}^{(2)}-s_{n-1,r-1}^{(2)}>1.$$
In any case, we have found a contradiction.

This completes our inductive proof of Theorem \ref{maintheosymmetric} for fields with characteristic not $2$.

\section{Spaces of symmetric matrices with the maximal dimension: the characteristic $2$ case}\label{section8}

In this last section, we complete the proof of Theorem \ref{maintheosymmetric} by
tackling fields of characteristic $2$.
As we shall see, there is a great deal of additional complexity, in particular for the fields with two elements.
Throughout the section, we let $\K$ be an arbitrary field of characteristic $2$.
Once more, the proof is done by induction over $n$ and $r$.

Let $n$ and $r$ be non-negative integers such that $r<n$.
Let $\calS$ be an affine subspace of $\Mats_n(\K)$ such that
$$\urk \calS \leq r \quad \text{and} \quad \dim \calS=\max\left(s_{n,r}^{(1)}, s_{n,r}^{(2)}\right).$$
We wish to prove that $\calS$ is congruent to one of the spaces listed in Theorem \ref{maintheosymmetric}.
This is obvious if $r=0$, and if $r=1$ then $\dim \calS \leq 1$, whence Proposition \ref{rankatmost1lemma} yields that
$\calS$ is congruent to $\widetilde{\Mats_1(\K)}^{(n)}$ or that $\# \K=2$ and
$\calS$ is congruent to $\widetilde{Z_2(\K)}^{(n)}$.
In the remainder of the proof, we assume that $r \geq 2$, and we set
$$s:=\left\lfloor \frac{r}{2}\right\rfloor.$$
Moreover, we can assume, in the case when $n$ is odd and $r=n-1$, that $\calS \neq \Mata_n(\K)$ (for the contrary would
yield outcome (iii) in Theorem \ref{maintheosymmetric}).

Let us define $m$ as the minimal dimension for $S_H$, where $H$ ranges over
the \textbf{$\calS$-adapted} hyperplanes of $\K^n$, that is the linear hyperplanes that satisfy
$(\Mata_n(\K))_H \not\subset S_H$.
Combining Lemma \ref{keylemmasymmetriccar2} with Lemma \ref{lastkeylemma}, we obtain
that there is at least one $\calS$-adapted hyperplane of $\K^n$, and hence $m$ is well-defined
(note that $s \leq n-2$).
Moreover, $m \leq s$ provided that $r<n-1$.

Replacing $\calS$ with a congruent space, we lose no generality in assuming that $H:=\K^{n-1} \times \{0\}$
is $\calS$-adapted.
Then, we have five cases to consider:

\begin{itemize}
\item Case 1: $m=0$.
\item Case 2: $0<m \leq s$ and $S_H$ contains a non-zero alternating matrix.
\item Case 3: $m=1$ and $S_H$ contains a non-alternating matrix.
\item Case 4: $r=n-1$, $s< m$ and all the matrices of $S_H$ are alternating.
\item Case 5: $r=n-1$, $s<m$ and $S_H$ contains a non-alternating matrix.
\end{itemize}

\subsection{Case 1: $m=0$.}

Let us split every matrix of $\calS$ up as
$$M=\begin{bmatrix}
P(M) & [?]_{(n-1) \times 1} \\
[?]_{1 \times (n-1)} & ?
\end{bmatrix}\quad \text{where $P(M) \in \Mats_{n-1}(\K)$.}$$
Then $P(\calS)$ is an affine subspace of $\Mat_{n-1}(\K)$ with
$$\urk P(\calS) \leq r \quad \text{and} \quad \dim \calS=\dim P(\calS).$$
Following the line of reasoning from Section \ref{section7.1}, one shows that $\dim P(\calS)=s_{n-1,r}^{(1)}$ and
$s_{n-1,r}^{(1)}>s_{n-1,r.}^{(2)}$ In particular, if $r=2$ then we must have $n=3$.
If $r=n-1$ then we readily have $P(\calS)=\Mats_{n-1}(\K)$.
Otherwise the induction hypothesis can be applied to $P(\calS)$.
Since $(n-1,r)\neq (3,2)$ this leaves us with three possibilities for $P(\calS)$, which we regroup into two main subcases:
\begin{itemize}
\item Subcase 1.1: $r$ is even and $P(\calS)=\widetilde{\Mata_{r+1}(\K)}^{(n-1)}$;
\item Subcase 1.2: $P(\calS)=\widetilde{\Mats_r(\K)}^{(n-1)}$, or $\# \K=2$ and $P(\calS)=\widetilde{Z_{r+1}(\K)}^{(n-1)}$.
\end{itemize}
In the remainder of the section, we tackle each case separately.

\subsubsection{Subcase 1.1: $r$ is even and $P(\calS)=\widetilde{\Mata_{r+1}(\K)}^{(n-1)}$.}

We wish to prove that $\calS$ is congruent to $\widetilde{\Mata_{r+1}(\K)}^{(n)}$.
If $r=2$ then we have previously seen that $n=3$, which contradicts $r+1 \leq n-1$.
Thus, $r \geq 4$.

As $S_H=\{0\}$, there are affine maps $C_1 : \Mata_{r+1}(\K) \rightarrow \K^{r+1}$,
$C_2 : \Mata_{r+1}(\K) \rightarrow \K^{n-r-2}$, and $b : \Mata_{r+1}(\K) \rightarrow \K$
such that $\calS$ is the set of all matrices
$$\begin{bmatrix}
A & [0]_{(r+1) \times (n-r-2)} & C_1(A) \\
[0]_{(n-r-2) \times (r+1)} & [0]_{(n-r-2) \times (n-r-2)} & C_2(A) \\
C_1(A)^T & C_2(A)^T & b(A)
\end{bmatrix} \quad \text{with $A \in \Mata_{r+1}(\K)$.}$$
Then, we proceed exactly as in Section \ref{section6.1} to prove that $C_2=0$ and
that $C_1$ is range-compatible (and hence, linear): here, there is no exceptional case related to fields with cardinality $2$ because $r \geq 4$.
Then, Theorem \ref{alternatingRC} yields that $C_1$ is local, and applying an additional congruence transformation
we find that no generality is lost in assuming that $C_1=0$. Finally,
the affine form $b$ vanishes at every rank $r$ matrix of $\Mata_{r+1}(\K)$. As these matrices
generate the affine space $\Mata_{r+1}(\K)$, we deduce that $b=0$.
Hence, in this reduced situation, $\calS=\widetilde{\Mata_{r+1}(\K)}^{(n)}$.

\subsubsection{Subcase 1.2: $P(\calS)=\widetilde{\Mats_r(\K)}^{(n)}$, or $\#\K=2$ and
$P(\calS)=\widetilde{Z_{r+1}(\K)}^{(n-1)}$.}

We wish to prove that either $\calS$ is congruent to $\widetilde{\Mats_r(\K)}^{(n)}$,
or $\# \K=2$ and $\calS$ is congruent to $\widetilde{Z_{r+1}(\K)}^{(n)}$, or
$n=3$, $r=2$, $\# \K=2$ and $\calS$ is congruent to
$\calY_1(\K)$ or $\calY_2(\K)$.

Firstly, we can find affine maps $C : \Mats_r(\K) \rightarrow \Mat_{r,n-r}(\K)$ and
$D : \Mats_r(\K) \rightarrow \Mats_{n-r}(\K)$ such that
$$\calS=\biggl\{\begin{bmatrix}
N & C(N) \\
C(N)^T & D(N)
\end{bmatrix} \mid N \in \Mats_r(\K)\biggr\}.$$

Remember that $r \geq 2$.
For $N \in \Mats_r(\K)$, denote by $C_1(N),\dots,C_{n-r}(N)$ the columns of $C(N)$.
With the help of Corollary \ref{generationcor2symmetric}, the same line of reasoning as in Section \ref{section6.1} yields:

\begin{claim}\label{claimRCcar2}
If $r>2$ or $\# \K>2$ then $C_1,\dots,C_{n-r}$ are all range-compatible (and hence, linear). \\
If $r=2$ and $\# \K=2$ then $C_1,\dots,C_{n-r}$ all map every rank $1$ symmetric matrix to a vector of its range.
\end{claim}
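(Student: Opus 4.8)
The plan is to imitate the proof of Claim \ref{claimRC1}, replacing the alternating rank computation by the Schur complement formula of Lemma \ref{schurcomplementlemma}. I fix an index $j \in \lcro 1,n-r\rcro$ and study the $j$-th column map $C_j : \Mats_r(\K) \rightarrow \K^r$. Given $B \in \Mats_{r-1}(\K)$, I set $N:=B \oplus 0_1 \in \Mats_r(\K)$ and split $C_j(N)=\begin{bmatrix} u \\ \beta\end{bmatrix}$ with $u \in \K^{r-1}$ and $\beta \in \K$, so that $\varphi_j : B \mapsto \beta$ is an affine form on $\Mats_{r-1}(\K)$. The first step is to show that $\varphi_j$ vanishes at every nonsingular $B$. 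To this end I extract from the matrix $\begin{bmatrix} N & C(N) \\ C(N)^T & D(N)\end{bmatrix} \in \calS$ the principal submatrix indexed by $\lcro 1,r\rcro \cup \{r+j\}$, namely
$$\begin{bmatrix} B & [0]_{(r-1)\times 1} & u \\ [0]_{1 \times (r-1)} & 0 & \beta \\ u^T & \beta & D_{j,j}(N)\end{bmatrix}.$$
When $B$ is invertible, Lemma \ref{schurcomplementlemma} gives that the rank of this submatrix equals $(r-1)+\rk\begin{bmatrix} 0 & \beta \\ \beta & u^T B^{-1} u-D_{j,j}(N)\end{bmatrix}$, and the $2$ by $2$ block has determinant $-\beta^2$; hence $\beta \neq 0$ would force this submatrix, and therefore some element of $\calS$, to have rank $r+1$, contradicting $\urk \calS \leq r$. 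Thus $\varphi_j(B)=0$ whenever $B$ is nonsingular.

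Next, assume $r>2$ or $\#\K>2$, so that by Corollary \ref{generationcor2symmetric} the nonsingular matrices generate the affine space $\Mats_{r-1}(\K)$; since the affine form $\varphi_j$ vanishes on them, $\varphi_j=0$. This precisely says that $\im N \subseteq \K^{r-1}\times\{0\}$ implies $C_j(N) \in \K^{r-1}\times\{0\}$. To upgrade this to genuine range-compatibility I run the congruence argument of Claim \ref{claimRC1}: for an arbitrary linear hyperplane $V$ of $\K^r$, I choose $P \in \GL_r(\K)$ with $P\cdot V=\K^{r-1}\times\{0\}$ and apply the previous conclusion to the congruent space $(P \oplus I_{n-r})\,\calS\,(P \oplus I_{n-r})^T$, which still has the same block shape, still satisfies $\urk \leq r$, and whose $j$-th column map is $N' \mapsto P\,C_j\bigl(P^{-1}N'(P^{-1})^T\bigr)$. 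This yields the implication $\im N \subseteq V \Rightarrow C_j(N) \in V$ for every hyperplane $V$. Writing $\im N$ as the intersection of the hyperplanes containing it gives $C_j(N) \in \im N$ for all $N$, so $C_j$ is range-compatible; evaluating at $N=0$ forces $C_j(0)=0$, and an affine range-compatible map vanishing at $0$ is linear.

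Finally, in the excluded case $r=2$ and $\#\K=2$ the space $\Mats_{r-1}(\K)=\Mats_1(\K)$ is no longer affinely generated by its single nonsingular element, so I cannot conclude $\varphi_j=0$ and only retain that $\varphi_j$ vanishes at nonsingular $B$. Re-running the same congruence argument but restricted to rank $1$ matrices then shows that, for every rank $1$ symmetric matrix $N \in \Mats_2(\K)$, one has $C_j(N) \in \im N$, which is the desired weaker conclusion. The main obstacle is the first step: isolating the correct $(r+1)$ by $(r+1)$ submatrix and running the Schur complement to detect the forbidden rank $r+1$; the congruence bookkeeping, though routine, must be carried out carefully so that the block decomposition of $\calS$ and the bound $\urk \calS \leq r$ are preserved at every stage.
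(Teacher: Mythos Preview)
Your proof is correct and follows essentially the same route as the paper, which simply invokes ``the same line of reasoning as in Section \ref{section6.1}'' together with Corollary \ref{generationcor2symmetric}. The one genuine difference is the tool used in the first step: the paper's model argument (Claim \ref{claimRC1}) reads off $\rk M(B\oplus 0_1)>r$ directly from the fact that a specific row lies outside the span of the others, whereas you extract the principal $(r+1)\times(r+1)$ submatrix on indices $\lcro 1,r\rcro\cup\{r+j\}$ and apply Lemma \ref{schurcomplementlemma}. Your choice is arguably the cleaner adaptation here, because in the present setting the remaining columns $C_i$ with $i\neq j$ need not vanish (unlike in Section \ref{section6.1}, where $C_2=0$ had already been secured), so a direct rank estimate on the full matrix is less immediate than on the extracted principal block. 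The congruence bookkeeping and the hyperplane-intersection step are handled exactly as in the paper.
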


From there, we further split the discussion into three subcases.

\vskip 3mm
\noindent \textbf{Case 1.2.1: All the $C_i$ maps are local.} \\
For all $i \in \lcro 1,n-r\rcro$, we find a vector $Y_i \in \K^r$ such that
$C_i : N \mapsto NY_i$. Then, we set
$$Y:=\begin{bmatrix}
Y_1 & \cdots & Y_{n-r}
\end{bmatrix} \in \Mat_{r,n-r}(\K)$$
and
$$Q:=\begin{bmatrix}
I_r & -Y \\
[0]_{(n-r) \times r} & I_{n-r}
\end{bmatrix}.$$
Replacing $\calS$ with $Q^T \calS Q$ leaves all our previous assumptions unchanged,
but in the new situation we have $C=0$.
Finally, for every rank $r$ matrix $N \in \Mats_r(\K)$, we obtain $D(N)=0$.
Since $r \geq 2$, we can use Corollary \ref{generationcor2symmetric} to obtain $D=0$.
We conclude that $\calS=\widetilde{\Mats_r(\K)}^{(n)}$.

\vskip 3mm
\noindent \textbf{Case 1.2.2: All the $C_i$ maps are range-compatible, but one of them is non-local.} \\
Without loss of generality, we can assume that $C_1$ is non-local.
By Theorem \ref{symmetricRC}, we deduce that $\# \K=2$ and that there are vectors
$Y_1,\dots,Y_{n-r}$ of $\K^r$ together with scalars
$a_1,\dots,a_{n-r}$ such that $C_i : N \mapsto NY_i+a_i \Delta(N)$ for all $i \in \lcro 1,n-r\rcro$.
Note that $a_1=1$. With the same line of reasoning as in Case 1.2.1,
we can use a congruence transformation to reduce the situation to the point where $Y_1=\cdots=Y_{n-r}=0$.
Then, setting $L:=\begin{bmatrix}
a_2 & \cdots & a_{n-r}
\end{bmatrix}$,
$R:=\begin{bmatrix}
1 & L \\
[0]_{(n-r-1) \times 1} & I_{n-r-1}
\end{bmatrix}$
and $Q:=I_r \oplus R$, one checks that $Q^T \calS Q$ satisfies all the previous assumptions
with now $C_2=\cdots=C_{n-r}=0$.

It follows that no generality is lost in assuming that $C_1 : N \mapsto \Delta(N)$ and $C_2=\cdots=C_{n-r}=0$.
Now, for any $N \in \Mats_r(\K)$, we can write
$$D(N)=\begin{bmatrix}
a(N) & J(N) \\
J(N)^T & H(N)
\end{bmatrix} \quad \text{with $a(N) \in \K$, $J(N) \in \Mat_{1,n-r-1}(\K)$ and $H(N) \in \Mats_{n-r-1}(\K)$.}$$
If $n>r+1$ then we see that the affine maps $J$ and $H$ vanish at every rank $r$ matrix of $\Mats_r(\K)$, which leads
to $J=0$ and $H=0$ as $r \geq 2$. Hence, in any case we have $J=0$ and $H=0$.
Finally, by extracting the upper-left $(r+1)$ by $(r+1)$ submatrix, we find that,
for every $N \in \Mats_r(\K)$, the matrix
$\begin{bmatrix}
N & \Delta(N) \\
\Delta(N)^T & a(N)
\end{bmatrix}$ is singular.
Computing its determinant leads to
$$\forall N \in \Mats_r(\K), \; \Delta(N)^T N^\ad \Delta(N)=a(N) \det N.$$
Yet, as $\# \K=2$, we remember from Section \ref{section1.2} that
$$\forall N \in \Mats_r(\K), \; \Delta(N)^T N^\ad \Delta(N)=\det(N) \tr(I_{n-1}).$$
Therefore, $a(N)=\tr(I_{n-1})$ for every non-singular matrix $N \in \Mats_r(\K)$.
As $a$ is an affine map and $r \geq 2$, we deduce from Corollary \ref{generationcor2symmetric} that
$a$ is the constant map $N \mapsto \tr(I_{n-1})$.
Hence, $\calS=\widetilde{Z_{r+1}(\K)}^{(n)}$ in that reduced situation.

\vskip 3mm
\noindent \textbf{Case 1.2.3: Some $C_i$ map is not range-compatible.} \\
Then, we know from Claim \ref{claimRCcar2} that $r=2$, $n=3$, that $\# \K=2$ and that $C(0) \neq 0$.
In that case, we shall prove that $\calS$ is congruent to $\calY_1(\K)$ or to $\calY_2(\K)$.

\begin{Rem}\label{subtractlocalremark}
Let $X \in \K^2$, and consider the non-singular matrix $Q:=\begin{bmatrix}
I_2 & X \\
[0]_{2 \times 1} & 1
\end{bmatrix}$. Then, one computes that
$$Q^T \calS Q=\biggl\{\begin{bmatrix}
N & C(N)+NX \\
(C(N)+NX)^T & D(N)+X^TNX
\end{bmatrix} \mid N \in \Mats_2(\K)\biggr\}.$$
Thus, we see that this new space essentially satisfies the same conditions as $\calS$, with $C$
replaced by $N \mapsto C(N)+NX$. In other words, the situation is essentially unchanged
by adding a local map to $C$.
\end{Rem}

We know that $C$ maps the zero matrix to a non-zero vector $X_1$. Without loss of generality, we may assume that $X_1=\begin{bmatrix}
1 \\
0
\end{bmatrix}$ (replacing $\calS$ with $(Q \oplus 1)^T \calS (Q \oplus 1)$ for some well-chosen $Q \in \GL_2(\K)$).
Some local map on $\Mats_2(\K)$ coincides with $C$ on the matrices $\begin{bmatrix}
1 & 1 \\
1 & 1
\end{bmatrix}$ and $\begin{bmatrix}
0 & 0 \\
0 & 1
\end{bmatrix}$, and hence by Remark \ref{subtractlocalremark} we lose no generality in assuming that $C$ vanishes at those two specific matrices.
From there, we discuss whether $C$ maps $E_{1,1}$ to $0$ or to $X_1$.
Note in any case that the four matrices $0, E_{1,1}, E_{2,2}$ and $\begin{bmatrix}
1 & 1 \\
1 & 1
\end{bmatrix}$ generate the affine space $\Mats_2(\K)$, and hence $C$ is uniquely determined by its values on those matrices.

\vskip 3mm
\noindent \textbf{Subcase 1.2.3.1: $C$ maps $E_{1,1}$ to $X_1$.} \\
Then, one checks that
$$C : \begin{bmatrix}
a & b \\
b & c
\end{bmatrix} \longmapsto \begin{bmatrix}
c+1 \\
0
\end{bmatrix}.$$
In that case, writing that the determinant of each matrix of $\calS$ equals zero (with the help of the identity $\forall x \in \K, \; x(x+1)=0$),
we obtain
$$\forall N \in \Mats_2(\K), \; (\det N)\,D(N)=0.$$
Again, the affine map $D$ vanishes at every non-singular matrix of $\Mats_2(\K)$, and hence it equals $0$.
We conclude that $\calS=\calY_1(\K)$.

\vskip 3mm
\noindent \textbf{Subcase 1.2.3.2: $C$ maps $E_{1,1}$ to $0$.} \\
Then, one checks that
$$C : \begin{bmatrix}
a & b \\
b & c
\end{bmatrix} \longmapsto \begin{bmatrix}
a+b+c+1 \\
0
\end{bmatrix}.$$
We find scalars $\alpha,\beta,\gamma,\delta$ such that
$$D : \begin{bmatrix}
a & b \\
b & c
\end{bmatrix} \longmapsto \alpha a+\beta b+\gamma c+\delta.$$
Writing that every matrix of $\calS$ has determinant $0$ and using the identity $\forall x \in \K, \; x^2=x$, we obtain
$$\forall (a,b,c)\in \K^3, \; \beta\, abc+(\alpha+\gamma+\delta+1)\,ac+(1+\gamma)\,bc+\alpha\, ab+(\beta+\delta)\,b=0.$$
On the left hand-side of this equality we have a polynomial of degree at most 1 in each variable, and hence
its coefficients equal zero. This leads to $\beta=\alpha=0$, $\gamma=1$ and $\delta=\beta$.
We conclude that $\calS=\calY_2(\K)$.

\vskip 3mm
This completes our investigation of the case when $m=0$.

\subsection{Case 2: $0<m \leq s$ and $S_H$ contains a non-zero alternating matrix.}

In that case, no generality is lost in assuming that $S_H$ contains $E_{1,n}+E_{n,1}$.
Then, we split every matrix $M \in \calS$ up as
$$M=\begin{bmatrix}
P(M) & [?]_{(n-1) \times 1} \\
[?]_{1 \times (n-1)} & ?
\end{bmatrix} \quad \text{with $P(M) \in \Mats_{n-1}(\K)$}$$
and one splits $P(M)$ further up as
$$P(M)=\begin{bmatrix}
? & [?]_{1 \times (n-2)} \\
[?]_{(n-2) \times 1} & K(M)
\end{bmatrix} \quad \text{with $K(M) \in \Mats_{n-2}(\K)$.}$$
Then, with the line of reasoning from Section \ref{section7.2}, we combine our assumptions on $\calS$ with the inequality statement in Theorem
\ref{maintheosymmetric} and Lemma \ref{symmetriccornerlemma1} to obtain the following facts:
\begin{itemize}
\item The set $K(\calS)$ is an affine subspace of $\Mats_{n-2}(\K)$ with upper-rank at most $r-2$
and dimension $s^{(2)}_{n-2,r-2}$;
\item The space $\calS$ has dimension $s^{(2)}_{n,r}$;
\item Either $r \in \{2,3\}$ or $s^{(2)}_{n-2,r-2}>s^{(1)}_{n-2,r-2}$;
\item $m=s$.
\end{itemize}
In any case, by induction we recover that:
\begin{itemize}
\item Either $K(\calS)$ is congruent to $\WS_{n-2,r-2}(\K)$;
\item Or $r-2$ is odd, $\# \K=2$ and $K(\calS)$ is congruent to $Z'_{n-2,r-2}(\K)$.
\end{itemize}
Performing an additional congruence on $\calS$, we lose no generality in assuming that:
\begin{itemize}
\item Either $K(\calS)=\WS_{n-2,r-2}(\K)$;
\item Or $r$ is odd, $\# \K=2$ and $K(\calS)=Z'_{n-2,r-2}(\K)$.
\end{itemize}
Then, with the line of reasoning from Section \ref{section7.2}, it follows that:
\begin{itemize}
\item Either $P(\calS)=\WS_{n-1,r}(\K)$;
\item Or $r$ is odd, $\# \K=2$ and $P(\calS)=Z'_{n-1,r}(\K)$.
\end{itemize}
From there, we split the discussion into three subcases:
\begin{itemize}
\item Subcase 2.1: $r$ is even;
\item Subcase 2.2: $r$ is odd and $P(\calS)=\WS_{n-1,r}(\K)$;
\item Subcase 2.3: $r$ is odd, $\# \K=2$ and $P(\calS)=Z'_{n-1,r}(\K)$.
\end{itemize}
Note in any case that by the inequality statement in Theorem \ref{maintheosymmetric}, we have
$$s_{n,r}^{(2)} \geq s_{n,r.}^{(1)}$$

\subsubsection{Subcase 2.1: $r$ is even.}

We wish to prove that $\calS$ is congruent to $\WS_{n,r}(\K)$ or that
$n=3$, $r=2$, $\# \K=2$ and $\calS$ is congruent to $\calY_3(\K)$.

Unless $n \in \{3,4\}$, the line of reasoning featured in Section \ref{section7.2.1} can be transposed effortlessly so as to yield
that $\calS$ is congruent to $\WS_{n,r}(\K)$ (indeed, in that case we have $n-s-1 \geq s+1$, and either $n-s-1>s+1$ or $s+1>2$,
which helps validate the assumptions of Corollary \ref{Flanderscor} for the $\calT$ space).

Next, assume that $n=4$, so that $r=2$. For all $M \in \calS$, let us write
$$M=\begin{bmatrix}
A(M) & B(M)^T & ? \\
B(M) & [0]_{2 \times 2} & C(M) \\
? & C(M)^T & a(M)
\end{bmatrix}$$
where $A(M)$ and $a(M)$ are scalars, $B(M) \in \K^2$ and $C(M) \in \K^2$.
Again, we consider the $\calT$ space defined as
$$\calT:=\Bigl\{\begin{bmatrix}
B(M) & C(M)
\end{bmatrix} \mid M \in \calS\Bigr\},$$
to which we want to apply Corollary \ref{Flanderscor}.
Since $\urk \calT \leq 1$ and $B(\calS)=\K^2$, we know from Flanders's theorem that $\dim \calT=2$,
and hence $C(M)$ is an affine function of $B(M)$ only.
If there existed $M \in \calS$ such that $B(M)=0$ and $C(M)\neq 0$, then
as $P(\calS)=\WS_{3,2}(\K)$ we would even find such a matrix with $A(M)=1$, and it is obvious that we would have
$\rk M=3$, contradicting the fact that $r=2$. Hence, $\calT$ contains the zero matrix, and
Corollary \ref{Flanderscor} applies to it. From there, the line of reasoning from Section \ref{section7.2.1} can be applied
effortlessly and it yields that $\calS$ is congruent to $\WS_{4,2}(\K)$.

\vskip 3mm
It remains to tackle the case when $n=3$. Remembering that $S$ contains $E_{1,3}+E_{3,1}$ and that $m=s=1$, we can find
scalars $\alpha,\beta,\gamma,\lambda,\mu,\nu$ such that
$$\calS=\Biggl\{\begin{bmatrix}
a & b & c \\
b & 0 & \alpha a+\beta b+\gamma \\
c & \alpha a+\beta b+\gamma & \lambda a+\mu b+\nu
\end{bmatrix} \mid (a,b,c) \in \K^3\Biggr\}.$$
Setting $Q:=\begin{bmatrix}
1 & 0 & \beta \\
0 & 1 & 0 \\
0 & 0 & 1
\end{bmatrix}$ and replacing $\calS$ with $Q^T \calS Q$, we see that no generality is lost in assuming that $\beta=0$.
If $\mu \neq 0$, then the linear hyperplane $H_2:=\K \times \{0\} \times \K$ would satisfy $S_{H_2}=\{0\}$,
contradicting $m=1$. Thus, $\mu=0$.
If $\alpha=\gamma=\nu=\lambda=0$, then $\calS=\WS_{3,2}(\K)$.

Assume now that $(\alpha,\gamma,\nu,\lambda) \neq (0,0,0,0)$.
Computing the determinant of the matrices of $\calS$ yields
$$\forall (a,b,c)\in \K^3, \; \alpha^2 a^3+\lambda ab^2+\nu b^2+\gamma^2 a=0.$$
If $\# \K>2$, then $\# \K>3$ and hence the polynomial on the left hand-side, whose total degree is less than $4$,
is formally zero, which would contradict our assumptions. Therefore, $\# \K=2$.
From there, we obtain
$$\forall (a,b)\in \K^2, \; \lambda ab+(\alpha+\gamma)a+\nu b=0.$$
This yields $\lambda=\nu=0$ and $\alpha=\gamma$. Thus, $\alpha=\gamma=1$, and we conclude that $\calS=\calY_3(\K)$.

This finishes the proof in the subcase when $r$ is even.

\subsubsection{Subcase 2.2: $r$ is odd and $P(\calS)=\WS_{n-1,r}(\K)$.}

For any $M \in \calS$, let us write
$$M=\begin{bmatrix}
A(M) & B_1(M)^T & B(M)^T & [?]_{s \times 1} \\
B_1(M) & a(M) & [0]_{1 \times (n-s-2)} & b(M) \\
B(M) & [0]_{(n-s-2) \times 1} & [0]_{(n-s-2) \times (n-s-2)} & C(M) \\
[?]_{1 \times s} & b(M) & C(M)^T & c(M)
\end{bmatrix}$$
where $a(M)$, $b(M)$ and $c(M)$ are scalars, $A(M) \in \Mats_s(\K)$, $B_1(M) \in \Mat_{1,s}(\K)$, $B(M) \in \Mat_{n-s-2,s}(\K)$
and $C(M) \in \K^s$.

Then, as in Section \ref{section7.2.2}, we obtain that the affine space
$$\calT:=\Bigl\{\begin{bmatrix}
B(M) & C(M)
\end{bmatrix} \mid M \in \calS\Bigr\}$$
satisfies $\urk \calT \leq s$.
We want to prove that there exists a vector $Y \in \K^s$ such that $C(M)=B(M)Y$ for all $M \in \calS$.
With the same line of reasoning as in Section \ref{section7.2.2}, this would follow from Corollary \ref{Flanderscor}
unless $\# \K=2$, $n-s-2=s+1=2$ and $\calT$ does not contain the zero matrix.
Assume that this exceptional case holds. Then, $s=1$ and $n=5$. By Flanders's theorem, we have $\dim \calT \leq 2$, and
as $B(\calS)=\K^2$ we deduce that $C(M)$ is an affine function of $B(M)$ only.
Using $P(\calS)=\WS_{n-1,r}(\K)$, it follows that we can choose $M_0 \in \calS$ such that
$$\begin{bmatrix}
A(M_0) & B_1(M_0)^T \\
B_1(M_0) & a(M_0)
\end{bmatrix}=I_2, \quad B(M_0)=0 \quad \text{and} \quad C(M_0) \neq 0,$$
and it would follow that $\rk M_0 \geq 4$, contradicting the fact that $r=3$.

Thus, Corollary \ref{Flanderscor} applies to $\calT$.
Then, by following the chain of arguments from Section \ref{section7.2.2}, we arrive, after a harmless congruence transformation,
to the point where $C=0$ and where, by setting
$$J(M):=\begin{bmatrix}
a(M) & b(M) \\
b(M) & c(M)
\end{bmatrix}$$
for all $M \in \calS$, the space $J(\calS)$ is a $1$-dimensional affine subspace of $\Mats_2(\K)$ with upper-rank at most $1$.
In order to conclude, we can use Proposition \ref{rankatmost1lemma} to recover the possible structures of $J(\calS)$:
\begin{itemize}
\item Either $J(\calS)$ is congruent to $\Vect(E_{1,1})$, and then $\calS$ is congruent to a subspace
of $\WS_{n,r}(\K)$; as $\dim \calS=s_{n,r}^{(2)}=\dim \WS_{n,r}(\K)$, it would follow that $\calS$ is congruent to
$\WS_{n,r}(\K)$.
\item Or $\# \K=2$ and $J(\calS)$ is congruent to $Z_2(\K)$; in that case $\calS$ is congruent to a subspace of $Z'_{n,r}(\K)$, and as
those two spaces share the same dimension we conclude that $\calS$ is congruent to $Z'_{n,r}(\K)$.
\end{itemize}
In any case, we have obtained one of the desired outcomes.

\subsubsection{Subcase 2.3: $r$ is odd, $\# \K=2$ and $P(\calS)=Z'_{n-1,r}(\K)$.}

Now, we split every matrix of $\calS$ up as
$$M=\begin{bmatrix}
A(M) & B(M)^T & [?]_{s \times 1} \\
B(M) & R(M) & C(M) \\
[?]_{1 \times s} & C(M)^T & b(M)
\end{bmatrix}$$
with $A(M) \in \Mats_s(\K)$, $B(M) \in \Mat_{n-1-s,s}(\K)$,
$C(M) \in \K^s$, $b(M) \in \K$ and
$$R(M)=\begin{bmatrix}
K(M) & [0]_{2 \times (n-3-s)} \\
[0]_{(n-3-s) \times 2} & [0]_{(n-3-s) \times (n-3-s)}
\end{bmatrix} \quad \text{with $K(M) \in Z_2(\K)$.}$$
Then, we split
$$C(M)=\begin{bmatrix}
C_1(M) \\
C_2(M)
\end{bmatrix}
\quad \text{with $C_1(M) \in \K^2$ and $C_2(M) \in \K^{n-3-s}$}$$
and
$$B(M)=\begin{bmatrix}
B_1(M) \\
B_2(M)
\end{bmatrix} \quad \text{with $B_1(M) \in \Mat_{2,s}(\K)$ and $B_2(M) \in \Mat_{n-3-s,s}(\K)$.}$$
Set
$$\calT:=\Bigl\{\begin{bmatrix}
B_2(M) & C_2(M)
\end{bmatrix}\mid M \in \calS\Bigr\}\subset \Mat_{n-3-s,s+1}(\K).$$
As $K(M)$ has rank $1$ for all $M \in \calS$, we find that $\urk \calT \leq s$.

As $P(\calS)=Z'_{n-1,r}(\K)$, we find that $B_2(\calS)=\Mat_{n-3-s,s}(\K)$.
Then, Corollary \ref{Flanderscor} applies to $\calT$ except in two exceptional situations:
\begin{itemize}
\item If $(n-3-s,s+1,\# \K)=(2,2,2)$ and $\calT$ does not contain the zero matrix.
\item If $n-3-s<s+1$.
\end{itemize}
Assume that the first exceptional case holds, so that $s=1$ and $n=6$.
As $B_2(\calS)=\K^2$, we would deduce from Flanders's theorem that $C_2(M)$ is an affine function of $B_2(M)$ only.
As $P(\calS)=Z'_{n-1,r}(\K)$, it would follow that there is a matrix $M \in \calS$ such that $B_2(M)=0$, $C_2(M) \neq 0$,
and whose upper-left $3$ by $3$ block equals $\begin{bmatrix}
1 & 0 & 0 \\
0 & 0 & 0 \\
0 & 0 & 1
\end{bmatrix}$. Yet, this would yield $\rk M \geq 4$, contradicting $\urk \calS \leq 3$.

Assume now that the second exceptional case holds, so that $n \leq 2s+3$. As $s^{(2)}_{n,r} \geq s^{(1)}_{n,r}$,
we have $5s \leq 2n-5$, and we easily obtain that $(n,s)=(5,1)$.
Assume that $C_2(M)$ is not a linear function of $B_2(M)$. Then,
$$\calS':=\bigl\{M \in \calS : \; B_2(M)=0 \quad \text{and} \quad C_2(M)=1\bigr\}$$
is an affine subspace of $\calS$ with codimension at most $2$.
For $M \in \calS$, denote its upper-left $3$ by $3$ block by $H(M) \in \Mats_3(\K)$.
With a standard rank computation, we see that $\rk H(M) \leq 1$ for all $M \in \calS'$.
Yet, as $P(\calS)=Z'_{n-1,r}(\K)$ we see that $\dim H(\calS)=4$, whence $\dim H(\calS')\geq 2$.
By the first statement in Theorem \ref{maintheosymmetric}, we obtain a contradiction.

Thus, in any case, either we can apply Corollary \ref{Flanderscor} to $\calT$ or we directly have
that $C_2(M)$ is the product of $B_2(M)$ with a fixed scalar. Thus, with an additional congruence, we
reduce the situation to the one where $C_2=0$.
Finally, for $M \in \calS$, we can set
$$J(M)=\begin{bmatrix}
K(M) & C_1(M) \\
C_1(M)^T & b(M)
\end{bmatrix}\in \Mats_3(\K).$$
Using the minimality of $m$, we obtain, just like in Section \ref{section6.2},
that $S$ contains $E_{i,j}+E_{j,i}$ for all $i \in \lcro 1,s\rcro$ and all $j \in \lcro s+3,n-1\rcro$.
Moreover, combining inequality $5s \leq 2n-5$ with $s \geq 1$, we see that $n-3-s \geq s$.
As in the previous cases, we can then use Lemma \ref{symmetriccornerlemma1} inductively
to obtain that $\rk J(M) \leq 1$ for all $M \in \calS$.
Thus, $J(\calS)$, which is not a linear subspace of $\Mats_3(\K)$, has upper-rank at most $1$.
On the other hand $\dim J(\calS) \geq 1$ since $P(\calS)=Z'_{n-1,r}(\K)$.
Thus, by Proposition \ref{rankatmost1lemma}, the space $J(\calS)$ is congruent to $\widetilde{Z_2(\K)}^{(3)}$.
From there, we conclude that $\calS$ itself is congruent to a subspace of $Z'_{n,r}(\K)$. Since both spaces have dimension $s_{n,r}^{(2)}$
 we conclude that $\calS$ is congruent to $Z'_{n,r}(\K)$.

\vskip 3mm
This completes the study of Case 2.

\subsection{Case 3: $m=1$ and $S_H$ contains a non-alternating matrix.}

With the line of reasoning from Section \ref{section5.1.3}, we obtain an affine subspace
$\calT$ of $\Mats_{n-1}(\K)$ such that $\dim \calT=\dim \calS-1$ and $\urk \calT \leq r-1$.
From there, the chain of arguments of Section \ref{section7.3} can be followed effortlessly
so as to obtain that $\dim \calS <s_{n,r}^{(1)}$ or $\dim \calS <s_{n,r}^{(2)}$, thereby contradicting our assumptions.

\subsection{Case 4: $s < m$ and all the matrices in $S_H$ are alternating.}

We shall prove that this case leads to a contradiction.

First of all, we know from the start of the proof that since $s<m$ we must have
$r=n-1$ and $m \leq n-2$.
Then, $\left \lfloor \frac{n-1}{2}\right \rfloor <n-2$, leading to $n \geq 4$.

Moreover, since $r=n-1$ and $n \geq 4$, we see from Remark \ref{dimensionremarksymmetric} that
$$\dim \calS=\dbinom{n}{2}.$$
With a harmless congruence transformation, we see that no generality is lost in assuming that
$$S_H=\biggl\{\begin{bmatrix}
[0]_{(n-1)\times (n-1)} & X \\
X^T & 0
\end{bmatrix} \mid X \in \K^m \times \{0\}\biggr\}.$$
Then, we split every matrix $M$ of $\calS$ up as
$$M=\begin{bmatrix}
a(M) & L(M) & ? \\
L(M)^T & K(M) & [?]_{(n-2) \times 1} \\
? & [?]_{1 \times (n-2)} & ?
\end{bmatrix},$$
with $a(M) \in \K$, $L(M) \in \Mat_{1,n-2}(\K)$ and $K(M)\in \Mats_{n-2}(\K)$.

Since $S_H$ contains $E_{1,n}+E_{n,1}$, Lemma \ref{symmetriccornerlemma1} yields
$\urk K(\calS) \leq n-3$. Hence,
$$\dim \calS \leq \dim K(\calS)+(n-1)+\dim S_H \leq \dbinom{n-2}{2}+(n-1)+(n-2)=\dbinom{n}{2}.$$
As $\dim \calS=\dbinom{n}{2}$, all the intermediate inequalities turn out to be equalities, which yields:
\begin{enumerate}[(i)]
\item $m=n-2$;
\item $\dim K(\calS)=\dbinom{n-2}{2}$;
\item For all $a_1 \in \K$ and $L_1 \in \Mat_{1,n-2}(\K)$, the space $\calS$ contains a matrix of the form
$$\begin{bmatrix}
a_1 & L_1 & ? \\
L_1^T & [0]_{(n-2) \times (n-2)} & [?]_{(n-2) \times 1} \\
? & [?]_{1 \times (n-2)} & ?
\end{bmatrix}.$$
\end{enumerate}
Next, applying the same extraction technique but starting now from $E_{k,n}+E_{n,k}$
for an arbitrary $k \in \lcro 2,n-2\rcro$, we recover that
the translation vector space of $K(\calS)$ contains every symmetric matrix of the form
$$\begin{bmatrix}
[?]_{(n-3) \times (n-3)} & [?]_{(n-3) \times 1} \\
[?]_{1 \times (n-3)} & 0
\end{bmatrix}.$$
Then, $\dim K(\calS) \geq \dbinom{n-1}{2}-1$. Using statement (ii) above, we deduce that
$n-2 \leq 1$. This contradicts the fact that $n \geq 4$.

\subsection{Case 5: $s < m$ and $S_H$ contains a non-alternating matrix.}

We shall prove that this case leads to a final contradiction.

As in the previous case, the fact that $s<m$ leads to $r=n-1$, $m \leq n-1$ and $\dim \calS=\dbinom{n}{2}$.
In particular, $n \geq 3$ and $m \geq 2$.

Let us split any matrix $M \in \calS$ up as
$$M=\begin{bmatrix}
P(M) & C(M) \\
C(M)^T & a(M)
\end{bmatrix}$$
with $P(M) \in \Mats_{n-1}(\K)$, $C(M) \in \K^{n-1}$ and $a(M) \in \K$.

In $S_H$, we can find a matrix of the form
$$N=\begin{bmatrix}
[0]_{(n-1) \times (n-1)} & C_0 \\
C_0^T & 1
\end{bmatrix} \quad \text{with $C_0 \in \K^{n-1}$.}$$
Set $\calT:=C_0C_0^T+P(\calS)$.
By Lemma \ref{symmetriccornerlemma3}, we find that
$\urk(\calT) \leq n-2$, and hence by the inequality statement from Theorem \ref{maintheosymmetric} (together with the remark that
$s_{n-1,n-2}^{(2)} \leq s_{n-1,n-2}^{(1)}$),
$$\dim \calT \leq \dbinom{n-1}{2}.$$
Then, by the rank theorem,
$$\dim \calS =\dim P(\calS)+\dim S_H= \dim \calT+\dim S_H \leq \dbinom{n-1}{2}+(n-1)=\dbinom{n}{2}.$$
Thus, all the previous inequalities turn out to be equalities, which leads to:
$$\dim \calT=\dbinom{n-1}{2} \quad \text{and} \quad m=n-1.$$
Then, $\calT$ is an affine subspace of singular matrices of $\Mats_{n-1}(\K)$ with the maximal dimension, and hence we can
apply the induction hypothesis to it.
Noting that $s_{n-1,n-2}^{(1)}>s_{n-1,n-2}^{(2)}$ unless $n-1=3$, we lose no generality in assuming that one of the following six
 cases holds:
\begin{enumerate}[(a)]
\item $\calT=\widetilde{\Mats_{n-2}(\K)}^{(n-1)}$;
\item $\calT=\Mata_{n-1}(\K)$ and $n$ is even;
\item $n=4$ and $\calT=\WS_{3,2}(\K)$;
\item $\K$ has cardinality $2$ and $\calT=Z_{n-1}(\K)$;
\item $\K$ has cardinality $2$ and either $\calT=\calY_1(\K)$ or $\calT=\calY_2(\K)$;
\item $\K$ has cardinality $2$ and $\calT=\calY_3(\K)$.
\end{enumerate}

Now, let us consider the linear hyperplane $H'$ of $\K^n$ defined by the equation $x_{n-1}=0$ in the standard basis.
Noting that $\calT$ has the same translation vector space as $P(\calS)$,
we deduce that $\dim S_{H'} \leq 1$ in any one of cases (a), (d) and (e), whence $H'$ is $\calS$-adapted,
and we contradict the fact that $m \geq 2$.
In cases (c) and (f), we obtain that $\dim S_{H'} \leq 2$ and that all the matrices in $S_{H'}$ are alternating;
since $n \geq 4$, this shows that $H'$ is $\calS$-adapted; yet, $m\geq 2$, whence $\dim S_{H'}=2$; some congruence transformation then
turns $\calS$ into a space that satisfies Case 4, a situation which has been shown to yield a contradiction.

\vskip 3mm
Hence, case (b) holds whatever the choice of the matrix $N$ we have started from.
In particular, the translation vector space of $P(\calS)$ equals $\Mata_{n-1}(\K)$.
Choose again $N \in S_H$ such that $a(N)=1$. Let $N' \in S_H$ be such that $a(N')=0$.
As $a(N+N')=1$ we deduce that the matrix $D:=C(N+N')^T C(N+N')-C(N)C(N)^T$
is alternating. Yet, $C(N)C(N')^T+C(N')C(N)^T$ is obviously symmetric with diagonal zero.
Thus, expanding the expression of $D$ shows that $C(N')C(N')^T$ is alternating.
Considering the diagonal entries yields $C(N')=0$ and hence $N'=0$.
Thus, $S_H$ contains no non-zero alternating matrix, thereby contradicting the assumption that $m \geq 2$.

\vskip 3mm
The last remaining case has been shown to yield a contradiction, and our proof of Theorem \ref{maintheosymmetric}
is finally complete!

\end{document}